\newcommand*{\bigcdot}{}
\DeclareRobustCommand*{\bigcdot}{%
  \mathbin{\mathpalette\bigcdot@{}}%
}
\newcommand*{\bigcdot@scalefactor}{.5}
\newcommand*{\bigcdot@widthfactor}{2.5}
\newcommand*{\bigcdot@}[2]{%
  \sbox0{$#1\vcenter{}$}
  \sbox2{$#1\cdot\m@th$}%
  \hbox to \bigcdot@widthfactor\wd2{%
    \hfil
    \raise\ht0\hbox{%
      \scalebox{\bigcdot@scalefactor}{%
        \lower\ht0\hbox{$#1\bullet\m@th$}%
      }%
    }%
    \hfil
  }%
}
\newtheorem{theorem}{Theorem}[section]
\newtheorem{maintheorem}{Theorem}
\newtheorem{maincorollary}[maintheorem]{Corollary}
\newtheorem*{theorem*}{Theorem}
\newtheorem{lemma}[theorem]{Lemma}
\newtheorem{proposition}[theorem]{Proposition}
\newtheorem{claim}[theorem]{Claim}
\theoremstyle{definition}
\newtheorem{remark}[theorem]{Remark}
\newtheorem*{remark*}{Remark}
\newtheorem{definition}[theorem]{Definition}
\newtheorem*{acknowledgements}{Acknowledgements}
\newcommand{\PP}{\mathds{P}}
\newcommand{\QQ}{\mathds{Q}}
\newcommand{\ZZ}{\mathds{Z}}
\newcommand{\NN}{\mathds{N}}
\newcommand{\kk}{\mathds{k}}
\newcommand{\cha}{\mathrm{char}}
\newcommand{\HH}{\mathrm{H}}
\newcommand {\coker}{\mathrm{Coker}}
\newcommand {\gr}{\mathrm{gr}}
\newcommand {\Hom}{\mathrm{Hom}}
\newcommand {\Ext}{\mathrm{Ext}}
\newcommand {\im}{\mathrm{Im}}
\newcommand {\Ker}{\mathrm{Ker}}
\newcommand {\rD}{\mathrm{D}}
\newcommand {\rd}{\mathrm{d}}
\newcommand {\rL}{\mathrm{L}}
\newcommand {\rR}{\mathrm{R}}
\newcommand {\id}{\mathrm{id}}
\newcommand {\Hilb}{\mathcal{H}\kern -0.25ex{\mathit ilb\/}}
\newcommand {\cA}{\mathcal{A}}
\newcommand {\cL}{\mathcal{L}}
\newcommand {\cU}{\mathcal{U}}
\newcommand {\cB}{\mathcal{B}}
\newcommand{\cV}{{\mathcal V}}
\newcommand{\cW}{{\mathcal W}}
\newcommand{\cE}{{\mathcal E}}
\newcommand{\cHom}{{\mathcal H}om}
\newcommand{\cExt}{{\mathcal E}xt}
\newcommand{\cH}{{\mathcal H}}
\newcommand{\cG}{{\mathcal G}}
\newcommand{\cF}{{\mathcal F}}
\newcommand{\cN}{{\mathcal N}}
\newcommand{\cO}{{\mathcal O}}
\newcommand{\cT}{{\mathcal T}}
\newcommand{\epi}{\twoheadrightarrow}
\newcommand\rp{{\mathrm{p}}}
\newcommand\ru{{\mathrm{u}}}
\title{Non-Ulrich representation type}
\subjclass[2020]{Primary 14J60, 16G60; Secondary 13C14, 14F08, 14J20,
  14J45, 16H05, 16G50}
\keywords{Cohen-Macaulay modules, Arithmetically Cohen-Macaulay sheaves, Ulrich sheaves, moduli
  of vector bundles, wild representation type}
\author[D. Faenzi, F. Malaspina, G. Sanna]{Daniele Faenzi, Francesco Malaspina, Giangiacomo Sanna}
\thanks{F.M. partially supported by GNASA-INdAM and
  MIUR grant Dipartimenti di Eccellenza 2018-2022 (E11G18000350001).
  D.F.
  partially supported by ISITE-BFC project \textit{Motivic Invariants of
  Algebraic Varieties} ANR-lS-IDEX-OOOB and
  EIPHI ANR-17-EURE-0002.}
\begin{document}

\sloppy

\begin{abstract}
  We show a remarkable property of the CM-wild variety $\PP^1 \times
  \PP^2$, namely that the only ACM sheaves moving in positive-dimensional
  families are Ulrich bundles. A complete
  classification of the non-Ulrich range is given.

  We prove that this feature is unique in the sense that any 
  other ACM reduced closed subscheme $X \subset \PP^N$ of dimension $n
  \ge 1$ belongs to the well-known list of 
  CM-finite or CM-tame varieties, or else it remains CM-wild upon removing
  Ulrich sheaves.
\end{abstract}

\maketitle
\section{Introduction}

Given a reduced closed subscheme $X \subset \PP^N$ of dimension $n>0$
over an algebraically closed field $\kk$, we say that
$X$ is arithmetically Cohen-Macaulay (ACM) if its homogeneous
coordinate  algebra $\kk[X]$ is a graded Cohen-Macaulay ring.
A coherent sheaf $\cE$ on $X$ is
ACM if the module $E$ of global sections of
$\cE$ is a maximal Cohen-Macaulay (MCM) module over $\kk[X]$.

A few ACM varieties $X$ support only finitely many isomorphism classes of
indecomposable ACM sheaves (up to twist), so $X$ is of \textit{finite
CM representation type}, or \textit{CM-finite}. These varieties are
classified in
\cite{eisenbud-herzog:CM} and turn out to be: projective spaces,
smooth quadrics, rational
normal curves, the Veronese surface in $\PP^5$ and the rational
surface scroll of degree $3$ in $\PP^4$.

All ACM subvarieties $X$ besides these cases are CM-infinite.
In a few cases, $X$ supports only discrete families of non-isomorphic
indecomposable ACM sheaves. This happens for quadrics of corank $1$
and $\cha(\kk) \ne 2$ (see \cite[\S 4]{buchweitz-greuel-schreyer}) and
\textit{$t$-chains of rational curves}, i.e. $A_t$-configurations of
smooth rational projective curves, for $t \ge 2$ 
(see \cite{drozd-greuel}). We call these varieties
CM-discrete (although some authors call these varieties CM-finite as well).

Some CM-non-discrete varieties support at most $1$-dimensional families of
isomorphism classes of indecomposable ACM sheaves. In dimension $1$, this happens when
$X$ is an elliptic curve (by \cite{atiyah:elliptic}), or, in view of
\cite{drozd-greuel}, an $\tilde A_t$-configuration of
smooth rational projective curves for $t \ge 2$, or a rational projective curve with a
single simple node (which one may think of as an $\tilde
A_1$-configuration). These curves are called \textit{cycles of
  rational curves}.
In higher dimension, this happens when $X$ is a rational
surface scroll of degree $4$ in $\PP^5$, see \cite{faenzi-malaspina:minimal}.
A variety $X$ with this property is of \textit{tame CM-type}.

As opposed to the previous kinds of varieties, whose CM-categories are
under control, one introduces CM-wild varieties.
In terms of representation theory of
algebras, $X$ is of CM-wild type if the category of
finitely generated modules over any finitely generated associative $\kk$-algebra $\Lambda$
admits a representation embedding into the category of graded MCM
modules over $\kk[X]$. This means that there is an exact functor
$\Phi$ that carries finitely generated $\Lambda$-modules to graded MCM $\kk[X]$-modules
such that, given finitely generated $\Lambda$-modules $M$ and $M'$, $\Phi(M) \simeq
\Phi(M')$ implies $M \simeq M'$ and $\Phi(M)$ is indecomposable
whenever $M$ is. We refer e.g. to \cite[\S XIX]{simson-skowronski:III}
and \cite{drozd-greuel} for more
precise definitions of tame and wild representation type of algebras.

The category of ACM sheaves over a CM-wild variety is at least as
rich as the category of finitely generated modules of an arbitrary
finitely generated associative algebra.
It is clear that, if $X$ is CM-wild in the algebraic
sense, then $X$ supports families of
pairwise non-isomorphic indecomposable ACM sheaves of arbitrarily
large dimension, so $X$ is CM-wild in the geometric sense.

The main result of \cite{faenzi-pons:arxiv} asserts that all ACM integral
closed subschemes in $\PP^N$ which are not in the list of CM-finite,
CM-discrete or CM-tame varieties mentioned above are CM-wild in the algebraic sense.
\smallskip

Among ACM sheaves, a special role is played by Ulrich sheaves. These
are characterized by the linearity of the minimal graded free resolution
over the polynomial ring of their module of global sections.
Ulrich sheaves, originally studied for computing Chow forms,
conjecturally exist over any variety
(we refer to \cite{eisenbud-schreyer-weyman}). They are important for
Boij-Söderberg theory (cf. \cite{eisenbud-schreyer:betti-cohomology,shcreyer-eisenbud:ICM}) and
for the determination of the representation type of varieties
(see \cite{faenzi-pons:arxiv}).

Over many smooth algebraic varieties, heuristics about Ulrich sheaves point out
that, among ACM sheaves of a fixed rank, they frequently  move in the
largest families, i.e. the dimension of their deformation space is
maximal among such sheaves. For instance, Fano threefolds of Picard
number one and index at least two admit ACM sheaves of rank two;
most of them are semistable, and their moduli space has
the largest dimension precisely in the case of Ulrich sheaves (see \cite{brafa2}).
This happens also on some Fano threefolds of higher Picard rank
(we refer e.g. to \cite{casnati-faenzi-malaspina:3} and references therein).
The above considerations motivate the belief that, when $X$ is CM-wild,
there should exist families of pairwise non-isomorphic indecomposable
Ulrich sheaves of arbitrarily large dimension (so $X$ should be Ulrich
wild). Algebraically, it should be possible to construct the
representation embedding $\Phi$ in such a way that it lands into the
category of Ulrich (also known as \textit{maximally generated}) MCM modules.

\smallskip

The present paper is devoted to further study the impact of Ulrich
sheaves on the representation type of varieties, mostly on smooth ones. Namely,
taking for granted the slogan that Ulrich sheaves should move in the
largest families, we ask what happens if we exclude them: does the
representation type of $X$ change? More precisely, can $X$ be
downgraded to a CM-finite or CM-tame variety if we require that the
image of the representation embedding $\Phi$ contains only finitely
many Ulrich modules for any given rank, so that $X$ is algebraically
non-Ulrich CM-wild? In particular, are there unbounded families of
indecomposable ACM sheaves on $X$ which are not Ulrich?

Our first main contribution is that the answer to this question is
negative, except for the two smooth CM-tame surfaces and for a single
CM-wild variety, which is $\PP^1 \times \PP^2$. More specifically,
after excluding Ulrich sheaves, the
two rational scrolls of degree $4$ and $\PP^1 \times \PP^2$
become of finite
CM representation type, while all other varieties keep their
representation type unchanged.
This holds for all reduced ACM varieties of dimension $n \ge 1$ and
$\cha(\kk) \ne 2$.

\begin{maintheorem} \label{main1}
  Let $X \subset \PP^N$ be a reduced closed non-degenerate ACM
  subscheme of dimension
  $n \ge 1$. Then $X$ is algebraically non-Ulrich CM-wild unless $X$ is:
  \begin{enumerate}[label=\roman*)]
  \item a linear space;
  \item a quadric hypersurface of corank at most one;
  \item an $A_t$-configuration of smooth rational curves, for some $t \ge 1$;
  \item an $\tilde A_t$-configuration of smooth rational curves, for $t
    \ge 1$, or an elliptic curve;
  \item \label{tame} a surface scroll of degree $d$ in $\PP^{d+1}$
    with $d \in \{3,4\}$;
  \item \label{P1xP2} the Segre product $\PP^1 \times \PP^2$ in $\PP^5$.
  \end{enumerate}
In cases \ref{tame} and \ref{P1xP2}, $X$ supports
  only finitely many non-Ulrich ACM sheaves.
\end{maintheorem}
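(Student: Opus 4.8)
The plan is to reduce Theorem~\ref{main1} to the known dichotomy of \cite{faenzi-pons:arxiv} and then handle by hand the finitely many varieties where removing Ulrich sheaves actually changes the picture. First I would recall that, by \cite{faenzi-pons:arxiv}, every reduced non-degenerate ACM subscheme $X \subset \PP^N$ of dimension $n \ge 1$ falls into exactly one of the following classes: (a) CM-finite, i.e.\ one of the varieties in the Eisenbud--Herzog list (linear spaces, smooth quadrics, rational normal curves, the Veronese surface, the cubic scroll); (b) CM-discrete (corank-one quadrics in $\cha \kk \ne 2$, $A_t$-configurations of rational curves); (c) CM-tame (elliptic curves, $\tilde A_t$-configurations, the degree-$3$ and degree-$4$ surface scrolls); or (d) genuinely CM-wild. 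For a variety in classes (a)--(c), removing Ulrich sheaves can only make the category smaller, so $X$ is certainly not non-Ulrich CM-wild; these account precisely for cases i)--v) of the statement (cases i)--iv) together with the $d=3$ scroll being already CM-finite or CM-discrete or CM-tame, and the $d=4$ scroll being CM-tame). Thus the whole content of the theorem is the \emph{converse}: if $X$ is CM-wild and is \emph{not} $\PP^1 \times \PP^2$, then $X$ stays CM-wild after deleting Ulrich sheaves.

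The main work, therefore, is to upgrade the representation embedding $\Phi$ produced in \cite{faenzi-pons:arxiv} to one whose image avoids Ulrich sheaves entirely. I would inspect the construction of $\Phi$ in the CM-wild case: it is built from a matrix pair, i.e.\ a pencil of morphisms between two fixed ACM sheaves (typically direct sums of twists of $\cO_X$ and of a single distinguished bundle), and the output modules are extensions or kernels/cokernels governed by a representation of the relevant Kronecker-type algebra. The key numerical observation is that being Ulrich is a \emph{closed} condition pinning down the Hilbert polynomial (equivalently, all intermediate cohomology vanishing plus the maximal number of generators); so within a given rank $r$ the Ulrich sheaves form a bounded family, and for $\Phi$ to violate the non-Ulrich condition one would need infinitely many Ulrich modules of bounded rank in its image. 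I would argue that, after a harmless twist and an elementary modification of the building blocks of $\Phi$ (e.g.\ replacing $\cO_X$ by $\cO_X(-1)$ in one slot, or adding a fixed non-Ulrich summand), the rank, degree, and Hilbert polynomial of every module $\Phi(M)$ are forced off the Ulrich locus, while indecomposability and the injectivity-on-iso-classes property of $\Phi$ are preserved because these are detected by $\mathrm{Hom}$ and $\mathrm{End}$, which the modification leaves under control. This is where a short case analysis is unavoidable, split along the classification of CM-wild varieties into hypersurfaces of degree $\ge 3$, complete intersections, curves of arithmetic genus $\ge 2$ and the remaining surfaces/threefolds; in each family one exhibits the explicit adjusted $\Phi$.

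The final, genuinely special piece is $\PP^1 \times \PP^2$: here I would quote (or prove in the body of the paper, as the abstract promises) the complete classification of its non-Ulrich ACM sheaves, showing there are only finitely many indecomposable ones up to twist, so that no non-Ulrich CM-wild embedding can exist; this is exactly the ``remarkable property'' advertised in the abstract and is what forces $\PP^1 \times \PP^2$ onto the exceptional list alongside the two scrolls. The hard part will be this last step: proving that \emph{every} indecomposable ACM sheaf on $\PP^1 \times \PP^2$ that is not Ulrich lies in a finite list, which requires a fine analysis of the Beilinson-type spectral sequence (or the cohomology table) on $\PP^1 \times \PP^2$ together with a stability/boundedness argument ruling out positive-dimensional non-Ulrich families — and, dually, showing that the exclusion of Ulrich sheaves genuinely collapses the wild behaviour rather than merely thinning it. In the body of the paper this is presumably carried out by a direct study of the matrix factorizations / ACM modules over the Segre threefold, and that is the technical heart of Theorem~\ref{main1}.
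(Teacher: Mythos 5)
Your high-level architecture does match the paper's (quote the dichotomy of \cite{faenzi-pons:arxiv}, arrange for the representation embeddings to avoid Ulrich sheaves, and settle $\PP^1\times\PP^2$ by a complete Beilinson-type classification of its ACM bundles), but the decisive middle step is only asserted, and the mechanisms you sketch do not work as stated. Adding a fixed non-Ulrich summand destroys indecomposability, so it cannot be built into a representation embedding; and the observation that Ulrich sheaves of fixed rank have a fixed Hilbert polynomial is no obstruction at all, since on a CM-wild variety they typically move in arbitrarily large families, so nothing prevents infinitely many of them of a given rank from lying in the image of $\Phi$. What the paper actually uses is a precise extension criterion (Theorem \ref{generale}, a variant of \cite[Theorem A]{faenzi-pons:arxiv}): the sheaves produced are iterated extensions of two fixed simple semistable ACM sheaves $\cA,\cB$ with $\rp(\cB)\prec\rp(\cA)$ and $\dim\Ext^1_X(\cB,\cA)\ge 3$, and such an extension can be Ulrich only if $\cA$ and $\cB$ are both Ulrich initialized by the same twist. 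The problem then becomes exhibiting such pairs with at least one non-Ulrich member, e.g.\ $\cA=\cO_X$ and $\cB=\cU_k$ a Fibonacci-type exceptional Ulrich bundle on scrolls of minimal degree, with explicit Riemann--Roch estimates to get the Ext bound; none of this is visible in your ``harmless twist'' heuristic.

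Moreover, in the non-minimal degree range $d\ge N-n+2$ the paper does not modify $\Phi$ at all: it proves a genuinely new theorem that the $c$-th syzygy $\Sigma^X_c(\cF)$ of an ACM sheaf $\cF$ on a codimension-$c$ linear section $Y=X\cap M$ is Ulrich on $X$ if and only if $\cF$ is Ulrich and $X$ has minimal degree. The proof lifts the Koszul resolution of $\kk[Y]$ into the minimal resolution of $\HH^0_*(\cF)$ to produce $\HH^0(X,\cE(c))\ne 0$ and $\HH^0(X,\cE^\vee(1-c))\ne 0$, and combines $\HH^0(X,\omega_X(n-1))\ne 0$ with Serre duality to get $\HH^n(X,\cE(c-n))\ne 0$, so no twist can be Ulrich. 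Nothing in your proposal anticipates this, nor the quasi-minimal-degree surface case (simple non-Ulrich ACM syzygy bundles of rank-two Ulrich bundles, or kernels of general maps $\cO_X(L)^{\oplus b}\to\cO_X(M)^{\oplus a}$ on del Pezzo surfaces), nor the curve case, where the paper adjusts degrees in the constructions of \cite{drozd-greuel} and checks non-Ulrichness by Riemann--Roch. Finally, the finiteness assertion in case v) (the quartic scrolls) also requires the classification of \cite{faenzi-malaspina:minimal}, which you leave unaddressed; your identification of the $\PP^1\times\PP^2$ classification as the technical heart of case vi) is accurate and corresponds to the paper's Section \ref{section:segre}.
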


For the second theorem, we use suitably chosen sets of generators of
the derived category of coherent sheaves over projective bundles over
$\PP^1$ to obtain a complete
classification of the ACM indecomposable bundles (Ulrich or not) over
$\PP^1 \times \PP^2$ and quartic scrolls. This second case is actually a direct
extrapolation from \cite{faenzi-malaspina:minimal}, so the main point
is to treat $\PP^1 \times \PP^2$, embedded as a degree $3$ submanifold
of $\PP^5$ via the Segre product.
To state the result, let us introduce some notation. Consider the
projection $\pi$ from $X=\PP^1
\times \PP^2$ to $\PP^1$ and put $F$ for the divisor class of a fibre
of $\pi$ and $L$ for the pull-back of the class of a line on
$\PP^2$. Set $\Omega_\pi$ for the cotangent bundle of $\PP^2$,
pulled-back to $X$.

\begin{maintheorem} \label{B}
Let $\cF$ be an indecomposable ACM sheaf on $\PP^1 \times \PP^2$,
assume $\HH^0(\cF)=0$ and $\HH^0(\cF(1))\ne 0$. Then $\cF$ is:
\begin{enumerate}[label = \roman*)]
\item either an Ulrich bundle of the form:
  \begin{equation}
    \label{ulrich-extension}
  0 \to \cO_X(-F)^{\oplus a} \to \cF \to \cO_X(F-L)^{\oplus b} \to 0,
  \qquad \mbox{  for some $a,b \in \NN$},
\end{equation}
\item either $\cO_X(-1)$ or $\cO_X(-L)$ or the Ulrich bundle $\Omega_\pi(L)$.
\end{enumerate}
\end{maintheorem}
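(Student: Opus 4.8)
The plan is to exploit the structure of $X$ as the projective bundle $\pi\colon X=\PP(\cO_{\PP^1}^{\oplus 3})\to\PP^1$, whose fibres are copies of $\PP^2$ and whose relative hyperplane class is $L$. The dual exceptional collection $\langle\cO_{\PP^2}(-1),\Omega_{\PP^2}(1),\cO_{\PP^2}\rangle$ of a fibre pulls back to the relative generating triple $\cB_2=\cO_X(-L)$, $\cB_1=\Omega_\pi(L)$, $\cB_0=\cO_X$, and for any coherent $\cF$ on $X$ the relative Beilinson spectral sequence along $\pi$ has $E_1$-terms $\rR^q\pi_*\bigl(\cF\otimes\cO_X(-iL)\bigr)\otimes\cB_i$, pulled back to $X$, with $i\in\{0,1,2\}$, converging to $\cF$ in degree $0$.

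The first substantial step is to show that the ACM hypothesis collapses this spectral sequence. Feeding the vanishings $\HH^1_*(\cF)=\HH^2_*(\cF)=0$ into the Leray spectral sequence of $\pi$ forces $\rR^q\pi_*(\cF(-iL))$ to vanish off the diagonal $q=i$; the three surviving terms then assemble into a three-step filtration $0\subset P_0\subset P_1\subset\cF$ with $P_0\cong\pi^*\cA_0\otimes\cO_X$, $P_1/P_0\cong\pi^*\cA_1\otimes\Omega_\pi(L)$ and $\cF/P_1\cong\pi^*\cA_2\otimes\cO_X(-L)$, where $\cA_0=\pi_*\cF$, $\cA_1=\rR^1\pi_*(\cF(-L))$ and $\cA_2=\rR^2\pi_*(\cF(-2L))$; the ACM hypothesis also forces these to be locally free on $\PP^1$, hence direct sums of line bundles $\cO_{\PP^1}(m)$ (in particular $\cF$ is a vector bundle).

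Next I would pin down the $\cA_i$. Since $\pi_*\Omega_\pi(L)=\pi_*\cO_X(-L)=0$ and $\rR^1\pi_*\cO_X=0$, the graded pieces $P_1/P_0$ and $\cF/P_1$ have vanishing $\HH^0$, whence $\HH^0(\cF)=\HH^0(\PP^1,\cA_0)$ and $\HH^1(\cF)=\HH^1(\PP^1,\cA_0)$; the equalities $\HH^0(\cF)=0$ and $\HH^1_*(\cF)=0$ then force $\cA_0=\cO_{\PP^1}(-1)^{\oplus a}$, i.e. $P_0\cong\cO_X(-F)^{\oplus a}$. A parallel analysis, now using $\HH^2_*(\cF)=0$, the normalisation $\HH^0(\cF(1))\ne0$, and the indecomposability of $\cF$ (a too-negative or too-positive summand of $\cA_2$ would either violate the cohomology vanishings, violate the normalisation, or split $\cF$ off against another $\cB_2$-piece, by the classification of bundles on $\PP^1$), restricts the summands of $\cA_2$ to $\cO_{\PP^1}(c)$ with $c\in\{-1,0,1\}$, contributing $\cO_X(-1)$, $\cO_X(-L)$ and $\cO_X(F-L)$ respectively; likewise $\cA_1$ is a sum of copies of $\cO_{\PP^1}$, since only $b=0$ makes $\Omega_\pi(L)(bF)$ ACM and summands of distinct degrees split off.

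The heart of the matter — and the main obstacle — is to classify the gluing, i.e. the extension classes assembling the graded pieces, which reduces to computing a handful of $\Ext^1$ groups on $X$ among $\cO_X(-F)$, $\Omega_\pi(L)$ and the $\cO_X(cF-L)$, $c\in\{-1,0,1\}$. The vanishings $\Ext^1_X(\Omega_\pi(L),\cO_X(-F))=0$ and $\Ext^1_X(\cO_X(cF-L),\Omega_\pi(L))=0$ for $c\le1$ show that $\Omega_\pi(L)$ never extends non-trivially with the other pieces, so when $\cA_1\ne0$ the summand $\Omega_\pi(L)$ splits off and, by indecomposability, $\cF\cong\Omega_\pi(L)$ (the normalisation forcing the remaining twist to be trivial). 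When $\cA_1=0$ one finds $\Ext^1_X(\cO_X(cF-L),\cO_X(-F))=0$ for $c\in\{-1,0\}$ while $\Ext^1_X(\cO_X(F-L),\cO_X(-F))=\kk^{3}$: the only non-trivial gluing is thus between $\cO_X(-F)^{\oplus a}$ and $\cO_X(F-L)^{\oplus b}$, producing exactly the extensions \eqref{ulrich-extension}, which a short cohomology computation shows to be Ulrich, whereas $\cO_X(-1)$ and $\cO_X(-L)$ — not gluing to $\cO_X(-F)$, and (as one checks via the classification on $\PP^1$ and the projection formula) splitting off whenever forced together with other $\cB_2$-pieces — can only occur alone. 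This exhausts the indecomposable possibilities and yields the list. The bulk of the work lies in this last step; the quartic scroll is entirely parallel, with $\PP^1\times\PP^2$ replaced by a Hirzebruch surface and the triple $(\cO_X,\Omega_\pi(L),\cO_X(-L))$ by a length-two relative exceptional collection, and is carried out in \cite{faenzi-malaspina:minimal}.
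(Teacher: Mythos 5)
Your plan (a relative Beilinson spectral sequence along $\pi$ with the fibrewise triple $\cO_X,\Omega_\pi(L),\cO_X(-L)$, instead of the paper's absolute six-term exceptional collection) is legitimate in principle, but there is a genuine gap exactly where the difficulty of the theorem sits: the claim that the ACM hypothesis collapses the spectral sequence, i.e.\ that $\HH^1_*(\cF)=\HH^2_*(\cF)=0$ forces $R^q\pi_*(\cF(-iL))=0$ for $q\ne i$. The Leray sequence only yields, for each $t\in\ZZ$, the vanishing of $\HH^0(\PP^1,R^q\pi_*(\cF(tL))\otimes\cO_{\PP^1}(t))$ for $q=1,2$ and of $\HH^1(\PP^1,R^q\pi_*(\cF(tL))\otimes\cO_{\PP^1}(t))$ for $q=0,1$: the base twist is coupled to the fibre twist, so no off-diagonal direct image is killed. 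Indeed the asserted implication is false: $\cO_X(-2L)$ is an indecomposable ACM line bundle with $R^2\pi_*(\cO_X(-2L)(-L))\simeq\cO_{\PP^1}\ne 0$ and $R^2\pi_*(\cO_X(-2L)(-2L))\simeq\cO_{\PP^1}^{\oplus 3}$, so its relative Beilinson complex does not degenerate to a filtration but has a genuinely nonzero differential $\cO_X(-L)^{\oplus 3}\to\Omega_\pi(L)$, whose kernel recovers $\cO_X(-2L)$ via the twisted dual Euler sequence \eqref{eulerdual}. This sheaf only fails your normalization $\HH^0(\cF(1))\ne 0$, which your collapsing argument never invokes; under the full hypotheses the collapse does hold, but only because the classification itself does, so proving it amounts to controlling the nonzero differentials --- which is precisely where the paper invests most of its effort (the analysis of $\delta_1^{0,5},\delta_1^{2,4},\delta_1^{4,3}$ and the higher differentials in Lemma \ref{esatta}, the two Ext-vanishing lemmas that follow, and the Yoneda-pairing Lemmas \ref{Omega} and \ref{uovo di colombo} used to recognize $\Omega_\pi(L)$, $\cO_X(-L)$, $\cO_X(-1)$).

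A second, related soft spot: even granting the three-step filtration, the graded pieces of a filtration of an ACM sheaf need not be ACM, so you cannot restrict the summands of $\cA_1$ and $\cA_2$ by arguing that ``only $b=0$ makes $\Omega_\pi(L)(bF)$ ACM'' or that a too-negative or too-positive summand ``violates the cohomology vanishings''; the vanishing of $\HH^1_*(\cF)$ and $\HH^2_*(\cF)$ propagates to subquotients only through connecting maps, so these exclusions again require Ext computations across the whole filtration (or the spectral-sequence bookkeeping the paper carries out). The pairwise $\Ext^1$ groups you do state are correct, but as written the proposal assumes away the hard part of the argument rather than proving it.
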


This has the following surprising corollaries.

\begin{maincorollary} \label{cor1}
  Given a polynomial $p \in \QQ[t]$, any non-empty moduli space of $H$-semistable
  ACM sheaves on $X$ with Hilbert polynomial $p$ is a finite set of points.
\end{maincorollary}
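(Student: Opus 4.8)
The plan is to read the statement off Theorem~\ref{B}. Fix $p$, write $\overline p$ for its reduced Hilbert polynomial and $r$ for the rank (so $p = r\overline p$), and note that the closed points of any moduli space of $H$-semistable ACM sheaves on $X=\PP^1\times\PP^2$ with Hilbert polynomial $p$ are among the $S$-equivalence classes of $H$-semistable ACM sheaves $\cG$ with $\chi(\cG(t))=p(t)$. Since the moduli space is of finite type over $\kk$, it suffices to show that there are only finitely many such classes: that forces the space to be $0$-dimensional, i.e.\ a finite set of points.

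So let $\cG$ be $H$-semistable ACM with $\chi(\cG(t))=p(t)$. By Krull--Schmidt write $\cG=\bigoplus_i\cG_i$ with $\cG_i$ indecomposable. Each $\cG_i$ is ACM, because $\HH^j(\cG(t))=\bigoplus_i\HH^j(\cG_i(t))$, and each $\cG_i$ is $H$-semistable with reduced Hilbert polynomial $\overline p$ (a direct summand of a semistable sheaf is semistable with the same reduced Hilbert polynomial). The module of global sections of an ACM sheaf is MCM, hence nonzero and, a general linear form being a nonzerodivisor, the set $\{m:\HH^0(\cG_i(m))\neq0\}$ is a half-line; thus there is a unique twist $\cF_i:=\cG_i(k_i)$ meeting the normalization $\HH^0=0$, $\HH^0(-\otimes\cO_X(1))\neq0$ of Theorem~\ref{B}. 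By that theorem, $\cF_i$ is $\cO_X(-1)$, $\cO_X(-L)$, $\Omega_\pi(L)$, or a (possibly split) extension of $\cO_X(F-L)^{\oplus b_i}$ by $\cO_X(-F)^{\oplus a_i}$.

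The crucial observation is that the positive-dimensional families of Ulrich bundles \eqref{ulrich-extension} collapse to finitely many points once one passes to $S$-equivalence. A Riemann--Roch computation on $\PP^1\times\PP^2$ gives
\[
\chi(\cO_X(-F)(t))=t\binom{t+2}{2}=(t+2)\binom{t+1}{2}=\chi(\cO_X(F-L)(t)),
\]
so $\cO_X(-F)$ and $\cO_X(F-L)$ have equal Hilbert polynomials. Hence every extension $\cF_i$ as above has the Hilbert polynomial of $\cO_X(-F)^{\oplus(a_i+b_i)}$, independently of $a_i,b_i$, and since $\cO_X(-F)$ and $\cO_X(F-L)$ are stable line bundles with a common reduced Hilbert polynomial and occur as the Jordan--Hölder factors of $\cF_i$, we get $[\cF_i]=[\cO_X(-F)^{\oplus a_i}\oplus\cO_X(F-L)^{\oplus b_i}]$ in the moduli space. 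Moreover $\cO_X(-1)$, $\cO_X(-L)$ are stable line bundles, and $\Omega_\pi(L)$ is $H$-stable by a short direct check: any line subbundle of $\Omega_\pi(L)$ has strictly smaller slope because $\HH^0(\PP^2,\Omega_{\PP^2}(j))=0$ for $j\le1$. Therefore $[\cG_i]$, and hence $[\cG]$, is the class of a direct sum of twists of the five stable ACM bundles $\cO_X(-1),\cO_X(-L),\cO_X(-F),\cO_X(F-L),\Omega_\pi(L)$. For each of these types the reduced Hilbert polynomial of a twist determines the twist (twisting strictly moves the subleading coefficient), and likewise the common reduced Hilbert polynomial of all the Ulrich extensions pins down their twist; so only finitely many stable ACM bundles $\cS_1,\dots,\cS_N$ have reduced Hilbert polynomial exactly $\overline p$. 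Consequently $[\cG]=\big[\bigoplus_{j=1}^N\cS_j^{\oplus m_j}\big]$ with $m_j\in\NN$, and comparing Hilbert polynomials gives $\sum_j m_j\,\rk(\cS_j)=r$, an equation with finitely many solutions in $\NN^N$. Thus there are finitely many $S$-equivalence classes, and the moduli space is a finite set of points.

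The only step that is not pure Hilbert-polynomial bookkeeping is the collapse of the Ulrich family: one has to recognize that, although the bundles \eqref{ulrich-extension} genuinely deform (the relevant $\Ext^1$ between $\cO_X(F-L)$ and $\cO_X(-F)$ is nonzero), they are all strictly semistable and share a single finite set of $S$-equivalence classes, which is precisely why the deformations disappear in the moduli space; together with the elementary stability check for $\Omega_\pi(L)$, that is the conceptual crux. Everything else follows formally from Theorem~\ref{B}.
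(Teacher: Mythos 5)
Your proposal is correct and takes essentially the same route as the paper: read everything off Theorem~\ref{B}, note that the reduced Hilbert polynomials of the basic bundles are pairwise distinct even up to twist, and use that points of the moduli space are $S$-equivalence classes, so the positive-dimensional Ulrich families \eqref{ulrich-extension} collapse onto the finitely many polystable sheaves $\cO_X(-F)^{\oplus a}\oplus\cO_X(F-L)^{\oplus b}$. The only cosmetic difference is that you apply Theorem~\ref{B} to the Krull--Schmidt summands of the sheaf and then pass to Jordan--H\"older factors, whereas the paper applies it directly to the indecomposable summands of the Jordan--H\"older graded object; the bookkeeping is identical.
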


Put $c_0=0$, $c_1=1$, $c_{k+2}= 3c_{k+1}-c_k$ and $c_{-k}=c_k$ for all $k \ge
0$. The numbers $c_k$ are the odd terms of the Fibonacci sequence.

\begin{maincorollary} \label{cor2}
  For any $k \in \ZZ$ there is a unique indecomposable sheaf $\cU_k$
  fitting into:
  \[
  0 \to \cO_X(-F)^{\oplus c_{k-1}} \to \cU_k \to \cO_X(F-L)^{\oplus  c_{k}} \to 0.
  \]
  The sheaves $\cU_k$ are Ulrich and rigid, and satisfy:
  \[
  \cU_k^\vee
  \otimes \omega_X(2) \simeq \cU_{1-k}.
  \]
  Up to twist by $\cO_X(t)$, any
  rigid indecomposable ACM sheaf on $X$ is isomorphic either to
  $\cO_X$, or to $\cO_X(-L)$,
  or to $\Omega_\pi(L)$, or to $\cU_k$, for some $k$.
\end{maincorollary}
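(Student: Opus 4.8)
The plan is to use Theorem~\ref{B} to reduce the statement to the representation theory of the Kronecker quiver $K_{3}$ with two vertices and three arrows. To begin, note that rigidity is preserved by twisting by $\cO_X(t)$ and by the autoequivalences $\cG\mapsto\cG(t)$ and $\cG\mapsto\cG^{\vee}$, and that every nonzero indecomposable ACM sheaf $\cG$ on $X$ admits a unique twist $\cF=\cG(t)$ with $\HH^{0}(\cF)=0$ and $\HH^{0}(\cF(1))\ne 0$ (the module of sections of an ACM sheaf is a finitely generated torsion-free graded module, so it vanishes in low degrees and is nonzero in high degrees). By Theorem~\ref{B}, such an $\cF$ is either one of the three \emph{sporadic} bundles $\cO_X(-1)$, $\cO_X(-L)$, $\Omega_{\pi}(L)$, or an Ulrich bundle sitting in a sequence \eqref{ulrich-extension}. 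The sporadic cases are immediate: $\Ext^{1}(\cO_X(-1),\cO_X(-1))=\Ext^{1}(\cO_X(-L),\cO_X(-L))=\HH^{1}(\cO_X)=0$, while, writing $\Omega_{\pi}=p^{*}\Omega_{\PP^{2}}$ for the projection $p\colon X\to\PP^{2}$, the K\"unneth formula gives $\Ext^{1}(\Omega_{\pi}(L),\Omega_{\pi}(L))=\Ext^{1}_{\PP^{2}}(\Omega_{\PP^{2}},\Omega_{\PP^{2}})=0$ since $\Omega_{\PP^{2}}$ is exceptional. Twisting back yields the entries $\cO_X$, $\cO_X(-L)$, $\Omega_{\pi}(L)$ of the list, so the remaining task is to analyse the bundles of the form \eqref{ulrich-extension}.

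For this, set $\cA=\cO_X(-F)$ and $\cB=\cO_X(F-L)$. A K\"unneth computation on $\PP^{1}\times\PP^{2}$ shows that $\cA$ and $\cB$ are exceptional, that $\Ext^{\bullet}(\cA,\cB)=0$, that $\Hom(\cB,\cA)=0=\Ext^{\ge 2}(\cB,\cA)$, and that $\Ext^{1}(\cB,\cA)\cong\HH^{1}(\PP^{1},\cO(-2))\otimes\HH^{0}(\PP^{2},\cO(1))\cong\kk^{3}$. Hence in \eqref{ulrich-extension} the sub-bundle $\cA^{\oplus a}$ is canonically the ``$\cA$-isotypic part'' of $\cF$ (because $\Hom(\cA^{\oplus a},\cF)=\Hom(\cA^{\oplus a},\cA^{\oplus a})$) and $\cB^{\oplus b}$ is a canonical quotient, so that $\cF$ is determined, up to the natural $\GL_{a}\times\GL_{b}$-action, by its extension class in $\Ext^{1}(\cB^{\oplus b},\cA^{\oplus a})=\Hom_{\kk}(\kk^{b},\kk^{a})\otimes\kk^{3}$; this is precisely the datum of a representation $R$ of $K_{3}$ with dimension vector $(b,a)$. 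A d\'evissage with the long exact sequences of $\Hom$ and $\Ext$, using the vanishings above, then identifies for sheaves $\cF,\cF'$ of the form \eqref{ulrich-extension} with associated representations $R,R'$ the spaces $\Hom_{X}(\cF,\cF')=\Hom_{K_{3}}(R,R')$ (compatibly with composition) and shows $\Ext^{\ge 2}_{X}(\cF,\cF')=0$; combined with $\chi(\cF,\cF')=\langle\underline{\dim}\,R,\underline{\dim}\,R'\rangle$ (which follows from $\chi(\cA,\cA)=\chi(\cB,\cB)=1$, $\chi(\cA,\cB)=0$, $\chi(\cB,\cA)=-3$), this forces $\Ext^{1}_{X}(\cF,\cF')=\Ext^{1}_{K_{3}}(R,R')$. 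In particular the endomorphism algebras of $\cF$ and $R$ are isomorphic, and so are their $\Ext^{1}$'s, whence $\cF$ is indecomposable (resp.\ rigid) if and only if $R$ is.

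Now the problem is pure quiver combinatorics. Over the (wild) hereditary algebra $\kk K_{3}$, an indecomposable representation is rigid if and only if it is exceptional, which holds precisely when its dimension vector $(d_{1},d_{2})$ is a positive real root, i.e.\ $d_{1}^{2}+d_{2}^{2}-3d_{1}d_{2}=1$, and the exceptional representation with a prescribed such dimension vector is unique. Applying the reflections $\sigma_{1}(d_{1},d_{2})=(3d_{2}-d_{1},d_{2})$ and $\sigma_{2}(d_{1},d_{2})=(d_{1},3d_{1}-d_{2})$ to the simple roots $(1,0)$ and $(0,1)$ shows that the positive real roots of $K_{3}$ are exactly the vectors $(c_{k},c_{k-1})$, $k\in\ZZ$ (using $c_{-k}=c_{k}$ to merge the two orbits); the relation $d_{1}^{2}+d_{2}^{2}-3d_{1}d_{2}=1$ for $(c_{k},c_{k-1})$ is the identity $c_{k+1}c_{k-1}-c_{k}^{2}=-1$, an easy induction from $c_{k+1}=3c_{k}-c_{k-1}$. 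Pulling this back through the dictionary of the previous paragraph: for each $k$ the exceptional representation of dimension $(c_{k},c_{k-1})$ produces a unique indecomposable bundle $\cU_{k}$ sitting in the stated sequence; it is rigid and, being of the form \eqref{ulrich-extension}, Ulrich by Theorem~\ref{B}. Together with the three sporadic bundles, these exhaust the rigid indecomposable ACM sheaves on $X$ up to twist, which is the last assertion. Finally, from $\omega_{X}=\cO_X(-2F-3L)$ one gets $\omega_{X}(2)=\cO_X(-L)$, hence $\cU_{k}^{\vee}\otimes\omega_{X}(2)=\cU_{k}^{\vee}(-L)$; dualizing \eqref{ulrich-extension} for $\cU_{k}$ and twisting by $\cO_X(-L)$ turns it into $0\to\cO_X(-F)^{\oplus c_{k}}\to\cU_{k}^{\vee}(-L)\to\cO_X(F-L)^{\oplus c_{k-1}}\to 0$, a sequence of the form \eqref{ulrich-extension} whose multiplicities $(c_{k},c_{k-1})=(c_{-k},c_{1-k})$ are those of $\cU_{1-k}$ and whose middle term, being the image of $\cU_{k}$ under a contravariant autoequivalence, is again indecomposable and rigid; by the uniqueness just proved it is $\cU_{1-k}$.

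The step I expect to be the main obstacle is establishing the homological faithfulness of this quiver dictionary — that the assignment $\cF\mapsto R$ matches $\Hom$ and $\Ext^{1}$, not merely isomorphism classes — so that indecomposability and rigidity really transfer in both directions. In practice this comparison is essentially already carried out in the proof of Theorem~\ref{B}, where the family \eqref{ulrich-extension} is produced, so one can simply quote it from there; the remaining ingredients — the theory of exceptional modules over the Kronecker quiver, the cohomology computations on $\PP^{1}\times\PP^{2}$, and the Fibonacci-type bookkeeping for the $c_{k}$ — are routine.
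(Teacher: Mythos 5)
Your argument is correct, but it is packaged differently from the paper's. The paper constructs the bundles $\cU_k$ recursively by left/right mutations of the exceptional pair $(\cO_X(-F),\cO_X(F-L))$ and then quotes the mutation machinery of Faenzi--Malaspina (Adv.\ Math.\ 2017, Theorem B and \S 2) for three facts: that the $\cU_k$ are exceptional with the stated resolution, that a rigid sheaf occurring as middle term of \eqref{ulrich-extension} must be some $\cU_k$ with $(a,b)=(c_{k-1},c_k)$, and that these data determine $\cU_k$ uniquely; the duality $\cU_k^\vee\otimes\omega_X(2)\simeq\cU_{1-k}$ then follows from uniqueness exactly as in your last paragraph. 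You instead make the underlying mechanism explicit: since $\cA=\cO_X(-F)$ and $\cB=\cO_X(F-L)$ are exceptional with $\Ext^\bullet_X(\cA,\cB)=0$, $\Hom_X(\cB,\cA)=\Ext^{\ge 2}_X(\cB,\cA)=0$ and $\Ext^1_X(\cB,\cA)\simeq\kk^3$, the extension-closed subcategory they generate is equivalent, compatibly with $\Hom$ and $\Ext^1$, to representations of the $3$-arrow Kronecker quiver, and the classification of rigid indecomposables then comes from standard quiver theory (Happel--Ringel: rigid indecomposables over a hereditary algebra are exceptional; Kac: real roots carry a unique indecomposable, here with dimension vectors exactly $(c_k,c_{k-1})$). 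The cohomological input (the K\"unneth computations, the canonicity of the sub- and quotient objects via $\Hom_X(\cA,\cB)=\Hom_X(\cB,\cA)=0$, and the Euler-form bookkeeping forcing $\Ext^1$ to match) all checks out, and the final dualization computation with $\omega_X(2)=\cO_X(-L)$ is correct. What each approach buys: the paper's proof is shorter because the uniqueness/rigidity analysis is delegated to the earlier reference, whereas your dictionary is essentially self-contained modulo classical quiver facts, and it has the small advantage of directly yielding uniqueness of $\cU_k$ among \emph{all} indecomposables with the given extension type (via Kac's theorem), not only among rigid ones. The step you flagged as a possible obstacle -- faithfulness of the assignment $\cF\mapsto R$ on $\Hom$ and $\Ext^1$ -- is indeed the crux, but it goes through by the standard pushout/pullback argument given the vanishings you computed, so no genuine gap remains.
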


The paper is organized as follows. 
We start by recalling some basic notions and preparing the proof of
our main results in \S \ref{background}, where we quickly sketch how
to deal with the case of curves by following the literature.
In \S \ref{constructing} we provide a result ensuring the existence of
unbounded families of ACM non-Ulrich sheaves under certain
conditions; this is actually a slight modification of \cite[Theorem
  A]{faenzi-pons:arxiv}.
Sections \ref{higher degree}
and \ref{minimal degree}
are devoted to the proof of Theorem
\ref{main1} in dimension $2$ and higher, with the exception of the statement concerning $\PP^1
\times \PP^2$.
More specifically, \S \ref{minimal degree} proves it for varieties of minimal
degree (except for $\PP^1 \times \PP^2$), i.e. non-degenerate integral varieties $X \subset \PP^N$ of dimension $n
\ge 2$ and degree $d = N-n+1$, while \S \ref{higher degree} proves it when $d >
N-n+1$, the special case $n=2$ being treated in \S \ref{section:dP},
separately from the range $n \ge 3$ showing up in \S \ref{higher}.
Finally, in \S \ref{section:segre} we analyze ACM bundles on the exceptional case
mentioned above, namely the Segre product $\PP^1 \times \PP^2 \subset
\PP^5$. Theorem \ref{B} is proved in \S \ref{subsection:beilinson},
cf. in particular Theorem \ref{classifica}. The two corollaries above
are proved in \S \ref{subsection:rigid}.

\begin{acknowledgements}
  We are grateful to Gianfranco Casnati for invaluable help.
\end{acknowledgements}

\section{Basic facts} \label{background}

Let $\kk$ be a field. Given an integer $N$, set
$\PP^N$ for the projective space of hyperplanes through the origin of
$\kk^{N+1}$.

\subsection{Notation and conventions}

Let $X \subset \PP^N$
be a closed integral subscheme of dimension
$n$.
We assume throughout the paper that $X$ is non-degenerate, namely,
there is no hyperplane of $\PP^N$ that contains $X$.
The variety $X$ is equipped with the very ample line bundle $\cO_X(1)$
defined as the restriction of $\cO_{\PP^N}(1)$ via the embedding $X
\subset \PP^N$. We will write $H$ for the divisor
class of $\cO_X(1)$.

The coordinate ring $R$ of $\PP^N$ is the graded polynomial algebra in
$N+1$ variables with the standard grading, namely
$R=\kk[x_0,\ldots,x_N]$.
The homogeneous coordinate ring $\kk[X]$ is the graded algebra
$\kk[X]=R/I_X$, where $I_X$ is the homogeneous radical ideal of
polynomials vanishing on $X$.

The degree of $X$ is computed via the Hilbert polynomial of $I_X$. We will
be denoted it by $d$.

\subsection{Cohen-Macaulay and Ulrich conditions}

Given a coherent sheaf $\cE$ on $X$,
the $i$-th \textit{cohomology module} of $\cE$ is the $\kk[X]$-module:
\[
\HH^i_*(\cE)=\bigoplus_{k\in\ZZ}\HH^i(X,\cE\otimes\cO_X(k)).
\]

For $i \ge 1$, the $\kk[X]$-modules $\HH^i_*(\cE)$ are artinian.

\begin{definition}
A coherent sheaf $\cE$ on $X$  is called \textit{ACM}, standing for
\textit{Arithmetically Cohen-Macaulay}, if $\cE$ is locally
Cohen-Macaulay on $X$ and:
 \[
 \HH^i_*(\cE)=0, \qquad \mbox{$\forall i \in  \{1,\ldots,n-1\}$.}
 \]

 Equivalently, the
 minimal graded free resolution of the module of global sections
 $E=\HH^0_*(\cE)$, seen as $R$-module, has length $N-n$.
\end{definition}
A locally Cohen-Macaulay sheaf on a smooth scheme is locally free, we
also call it a \textit{vector bundle} or simply a \textit{bundle}.

\medskip

The variety $X$ itself is said to be ACM if $X$ is projectively
 normal and $\cO_X$ is ACM. This is equivalent to ask that $\kk[X]$ is
 a graded Cohen-Macaulay ring, which in turn amounts to the fact
 that the minimal graded free resolution of
 $\kk[X]$ as $R$-module has length $N-n$.
 In this case, the line bundles $\cO_X(k)$
 are ACM.

 \begin{definition} Let $d$ be the degree of the embedded variety $X
   \subset \PP^N$.
   For $r>0$, a rank-$r$ ACM sheaf $\cE$ on $X$ is said to be
   \textit{Ulrich} if there
   is $t \in \ZZ$ such that $\HH^0(X,\cE(t-1)) = 0$ and $h^0(X,\cE(t)) =
   rd$. We say that $\cE$ is \textit{initialized by $t$} (we omit ``by $t$''
   if $t=0$).
 \end{definition}

Given a coherent sheaf $\cE$ on $X$,  asking that $\cE$ is initialized
and Ulrich is
tantamount to $\HH^*(X,\cE(-j))=0$ for all $1 \le j \le n$, cf. \cite[Proposition
2.1]{eisenbud-schreyer:betti-cohomology}.

\begin{remark}
  We should warn the reader that the usual definition of Ulrich sheaf
  in the literature is equivalent to our definition of initialized
  Ulrich sheaf. We adopted this slightly different definition in order
  to work with sheaves which are Ulrich up to a twist.
\end{remark}

\subsection{Semistability}

Let $X \subset \PP^N$ be a closed subscheme of dimension $n>0$ embedded by the very
ample divisor $H$.
Stability of sheaves on $X$ will always mean Gieseker stability of
pure $n$-dimensional sheaves with respect to the divisor $H$.

The Hilbert polynomial of a coherent sheaf $\cE$ on
$X$, computed with respect to $H$, is denoted by $P(\cE,t)$.
The rank of $\cE$ is defined as the element $r \in \QQ$ such that
 the leading coefficient of $P(\cE,t)$ equals $r d/n!$. For $r \ne 0$, we write $\rp(\cE,t):=P(\cE,t)/r$ for the {\it reduced
   Hilbert polynomial} of $\cE$.

 Given polynomials $p,q \in \QQ[t]$, we write $p \preceq q$ if $p(t) \leq
 q(t)$ for $t \gg 0$.
 A coherent sheaf $\cE$ of rank $r \ne 0$ is semistable if it is
 pure (i.e. all its subsheaves have support of dimension $n$) and, for any
 non-zero subsheaf $\cF \subsetneq \cE$, we have $\rp(\cF,t) \preceq \rp(\cE,t)$.
 Stability is defined by strict inequalities.

 A coherent sheaf $\cE$ on $X$ is simple if $\Hom_X(\cE,\cE)$ is generated by $\id_\cE$.

\subsection{Basic remarks on Theorem \ref{main1}}

\label{remarks}

Here are some comments about Theorem \ref{main1} from the introduction.
Again, we assume that $X \subset \PP^N$ is a $n$-dimensional
closed subscheme over a field $\kk$, with $n \ge 1$.
\begin{remark} 
To obtain CM-wildness of $X$, it
  suffices to find a representation embedding of some algebra of wild
  representation type to the category of ACM sheaves on $X$ (cf. for
  instance \cite[\S 1.2]{faenzi-pons:arxiv}).
  We will
  mostly take such algebra to be the free $\kk$-algebra in
  two generators or the path algebra of the Kronecker quiver with
  two vertices and three arrows.
\end{remark} 
\begin{remark} 
  In the setting of Theorem \ref{main1} (i.e. $X \subset \PP^N$ is
  reduced closed, non-degenerate, ACM and
  $\kk$ is algebraically closed), the restriction $\cha(\kk)
  \ne 2$ is only needed to deal with 
  the case of quadric hypersurfaces of corank $1$, which is derived
  from \cite{buchweitz-greuel-schreyer}, so the result is valid also
  in characteristic $2$ except perhaps for this case. More information on MCM
  modules on quadrics in characteristic $2$ can be found in
  \cite{buchweitz-eisenbud-herzog}.

\end{remark} 
\begin{remark} 
 The statement for curves in Theorem \ref{main1} is a consequence of
  \cite{drozd-greuel}, cf. also \cite{burban-drozd-greuel,
    bodnarchuk-burban-drozd-greuel}. It should be pointed out that the
  cohomological vanishing required for a sheaf to be ACM plays no role
  in dimension $1$ so the statement is really about locally Cohen Macaulay
  sheaves.
  
  More in detail, given a reduced connected projective curve $X \subset \PP^N$
  of degree $d$ over an algebraically closed field $\kk$,
  in order to check that $X$ is non-Ulrich CM-wild it
  suffices to find a representation embedding of some wild 
  $\kk$-algebra into the category of vector bundles over $X$ in such a
  way that, for any given rank $r$, the resulting rank-$r$ bundles $\cE$ satisfy:
  \begin{equation}
    \label{nonU}
  \HH^0(\cE(-H))=0, \qquad \mbox{$0 \ne \dim \HH^0(\cE) \ne d r$},
  \end{equation}
  except for finitely many choices of $\cE$.
  
  In turn, this is already the case for all vector bundles appearing in
  \cite[Theorem 7]{bodnarchuk-burban-drozd-greuel} if $X$ has
  arithmetic genus $g > 1$. Indeed, such bundles have degree $r$ so
  \eqref{nonU} follows easily from Riemann-Roch.

  For curves of arithmetic genus $g \le 1$
  which are not of type $A_t$ or $\tilde A_t$, the construction of
  \cite{drozd-greuel} provides bundles $\cE$ whose pull-back under the
  normalization $\pi : \tilde X \to X$ decompose as a direct sum
  $\oplus_{i=1}^t \cE_i$, where $(\tilde X_i \mid i=1,\ldots,t)$ is
  the set of irreducible components of $\tilde X$ and,
  for each $i \in \{1,\ldots,t\}$, the sheaf $\cE_i$ is a vector bundle on $\tilde X_i$.
  One checks that the degrees of the bundles $(\cE_i\mid i=1,\ldots,t)$ can be chosen
  in such a way that $\pi^*(\cE)$, 
  and hence $\cE$, satisfy \eqref{nonU} by Riemann-Roch.

  In the same way, the freedom in the choice of $\deg(\cE_i)_i$ allows
  to define infinitely indecomposable vector bundles which are not
  isomorphic up to twist over $A_t$-configurations for $t \ge 2$,
  as well as 1-parameter family thereof over elliptic curves and 
  $\tilde A_t$-configurations for $t \ge 1$.
\end{remark}

In view of the previous remark, in the proof of Theorem \ref{main1} we
will be allowed to assume that the dimension $n$ of $X$ is at least $2$.

\section{Representation embeddings and non-Ulrich sheaves}

\label{constructing}

Let  $X \subset \PP^N$ be a non-degenerate
closed subscheme of dimension $n > 0$ over a field $\kk$.
We propose here a criterion, based on classical ideas about
extensions of sheaves and modules, for $X$ to be non-Ulrich CM-wild.
Since this does not really depend on $X$ being smooth or ACM, we
formulate it in a more general setting than what is actually needed to
prove Theorem \ref{main1}.
The result is a slight modification of \cite[Theorem
  A]{faenzi-pons:arxiv}.

\begin{theorem} \label{generale}
Let $\cA$ and $\cB$ be simple
semistable ACM sheaves such that $\rp(\cB) \prec \rp(\cA)$ and assume
$\dim_{\kk} \Ext^1_X(\cB,\cA) \ge 3$. Then the following holds:
\begin{enumerate}[label=\roman*)]
\item\label{wild} the subscheme $X$ is CM-wild;
\item\label{n>1} if $n \ge 2$ and $\cA$ and $\cB$ are not Ulrich
  initialized by the same integer, then $X$ is algebraically
  non-Ulrich CM-wild;
\item \label{n=1} the same conclusion as in \ref{n>1} holds also for
  $n=1$ if there is no $t \in \ZZ$ such that
  $\HH^0(X,\cA(t))=\HH^1(X,\cB(t))=0$.
\end{enumerate}
\end{theorem}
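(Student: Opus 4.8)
The plan is to imitate the classical Bondarenko–Drozd style construction of a representation embedding from a wild algebra using extensions of a fixed pair of sheaves, then check that the cohomological hypotheses in \ref{n>1} and \ref{n=1} force the resulting sheaves out of the Ulrich locus. First I would fix a $3$-dimensional subspace $V \subset \Ext^1_X(\cB,\cA)$, choose a basis $\xi_1,\xi_2,\xi_3$, and for a pair of matrices $(M_1,M_2)$ of the same size with entries in $\kk$ build the extension class $\xi_1 \otimes \id + \xi_2 \otimes M_1 + \xi_3 \otimes M_2 \in \Ext^1_X(\cB^{\oplus m}, \cA^{\oplus m})$; this yields a sheaf $\cE_{M_1,M_2}$ sitting in
\[
0 \to \cA^{\oplus m} \to \cE_{M_1,M_2} \to \cB^{\oplus m} \to 0.
\]
The functor sending a representation $(M_1,M_2)$ of the free algebra $\kk\langle y_1,y_2\rangle$ (equivalently, a module over the $3$-Kronecker quiver) to $\cE_{M_1,M_2}$ is the candidate representation embedding. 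That it is full on the relevant Hom-spaces and reflects isomorphism and indecomposability is exactly the content of \cite[Theorem A]{faenzi-pons:arxiv}, whose proof uses only that $\cA,\cB$ are simple with $\rp(\cB)\prec\rp(\cA)$ (so $\Hom_X(\cB,\cA)=\Hom_X(\cA,\cB)=0$ by semistability, hence $\Hom_X(\cE,\cE)$ is computed from the long exact sequences and the matrix algebra generated by $M_1,M_2$); I would simply cite it, giving \ref{wild} immediately. Also $\cE_{M_1,M_2}$ is ACM because $\cA$ and $\cB$ are and ACM-ness is extension-closed, and it is locally Cohen–Macaulay for the same reason.

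For \ref{n>1}, the point is to show that for all but finitely many $(M_1,M_2)$ (in fact for all indecomposable ones of rank $>1$ in the quiver sense) the sheaf $\cE := \cE_{M_1,M_2}$ is not Ulrich up to any twist, i.e. no twist of $\cE$ is Ulrich. Write $p = \rp(\cA)$, $q=\rp(\cB)$, $r_\cA = \rk\cA$, $r_\cB=\rk\cB$; then $\cE$ has rank $m(r_\cA+r_\cB)$ and reduced Hilbert polynomial a fixed convex combination of $p$ and $q$, independent of $m$. Being Ulrich initialized by $t$ is equivalent, by the cited \cite[Prop.~2.1]{eisenbud-schreyer:betti-cohomology}, to $\HH^*(X,\cE(t-j))=0$ for $1\le j\le n$; in particular for $n\ge 2$ it forces $\HH^0(X,\cE(t-1))=0$ and $\HH^{1}(X,\cE(t-n))=0$. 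I would run the long exact cohomology sequence of the defining sequence twisted by $\cO_X(s)$:
\[
\cdots \to \HH^0(X,\cA(s))^{\oplus m} \to \HH^0(X,\cE(s)) \to \HH^0(X,\cB(s))^{\oplus m} \to \HH^1(X,\cA(s))^{\oplus m} \to \cdots.
\]
If $\cE(t)$ were Ulrich then its Hilbert polynomial equals $\rk(\cE)\cdot d\cdot\binom{s+n}{n}$ evaluated appropriately, and comparing with $P(\cA,t+s)$ and $P(\cB,t+s)$ — using that $\cA$, $\cB$ are ACM, so the relevant Betti tables are governed by their own initialization — one sees that forcing the full Ulrich vanishing on $\cE$ for a single value of $t$ forces simultaneously $\HH^0(X,\cA(t-1))=\HH^1(X,\cA(t-n))=0$ and $\HH^0(X,\cB(t-1))=\HH^1(X,\cB(t-n))=0$ and the connecting maps to vanish, which pins down $\cA$ and $\cB$ to both be Ulrich initialized by $t$, contradicting the hypothesis. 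The clean way to organize this is: $\cE(t)$ Ulrich $\iff$ $\cA(t)$ and $\cB(t)$ are both ``Ulrich-like'' in the sense of the vanishing $\HH^*(X,\bigcdot(t-j))=0$, $1\le j\le n$ — this uses that in the middle term of a short exact sequence the vanishing of all intermediate cohomology of the extremes is equivalent to that of the middle, since $\cA,\cB$ ACM kills the obstruction in degrees $2,\dots,n-1$ and the $\HH^0,\HH^n$ ends are handled by the boundary maps — so if $\cE(t)$ is Ulrich then so are the initialized-by-$t$ versions of $\cA$ and $\cB$, i.e. they are Ulrich initialized by the same integer $-t$ (up to sign/normalization), contradiction. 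This rules out all but possibly finitely many $\cE$, so the restricted functor still embeds a wild algebra, and $X$ is algebraically non-Ulrich CM-wild.

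For \ref{n=1} the intermediate cohomology condition is vacuous, so ``Ulrich up to twist'' for $\cE$ reduces to the existence of $t$ with $\HH^0(X,\cE(t-1))=0$ and $h^0(X,\cE(t))=\rk(\cE)\,d$, equivalently $\HH^0(X,\cE(t-1))=\HH^1(X,\cE(t-1))=0$ by Riemann–Roch on the curve. Chasing the long exact sequence, $\HH^0(X,\cE(t-1))=0$ needs $\HH^0(X,\cA(t-1))=0$, and $\HH^1(X,\cE(t-1))=0$ needs $\HH^1(X,\cB(t-1))=0$; conversely if there is \emph{no} $t$ with $\HH^0(X,\cA(t))=\HH^1(X,\cB(t))=0$ then no twist of $\cE$ can satisfy both, so no $\cE_{M_1,M_2}$ is Ulrich up to twist at all, and the whole image of the embedding avoids Ulrich sheaves, giving the conclusion with no exceptions to discard. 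I expect the main obstacle to be the bookkeeping in \ref{n>1}: carefully justifying the equivalence ``$\cE(t)$ Ulrich $\Rightarrow$ $\cA,\cB$ Ulrich initialized by the same integer'' via the long exact sequence, in particular controlling the connecting homomorphisms $\HH^0(X,\cB(s))^{\oplus m}\to\HH^1(X,\cA(s))^{\oplus m}$ (which are induced by the chosen extension classes tensored with the identity and the $M_i$, so they vanish on a common summand exactly when the scalar class $\xi$-part does) and making sure the argument is uniform in $m$ and excludes only finitely many $(M_1,M_2)$ rather than a positive-dimensional family. The rest is a direct citation of \cite[Theorem A]{faenzi-pons:arxiv} together with standard facts about ACM sheaves being closed under extension.
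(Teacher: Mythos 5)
Your proposal is correct and follows essentially the same route as the paper: invoke \cite[Theorem A]{faenzi-pons:arxiv} for CM-wildness (the only hypothesis to verify being that any nonzero morphism $\cA\to\cB$ is an isomorphism, and in fact all such morphisms vanish by semistability), and then show by a cohomology chase on the defining extension, using that $\cA$ and $\cB$ are ACM, that a sheaf built from both $\cA$- and $\cB$-parts cannot be Ulrich under the hypotheses of \ref{n>1} and \ref{n=1}. Two small remarks: your parenthetical claim that $\Hom_X(\cB,\cA)=0$ does not follow from semistability (only $\Hom_X(\cA,\cB)=0$ does, and that is all that is needed), and your worry about controlling the connecting homomorphisms is unnecessary, since the needed implication ``$\cE(t)$ Ulrich $\Rightarrow$ $\cA$ and $\cB$ Ulrich initialized by $t$'' uses only that $\cA^{\oplus a}$ is a subsheaf and $\cB^{\oplus b}$ a quotient of $\cE$, so no extensions have to be discarded at all.
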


\begin{proof}
  We use the setting and notation of \cite[Theorem
  A]{faenzi-pons:arxiv}. To be in position of applying that result, we should
  verify that any non-zero morphism $\cA \to \cB$ is an
  isomorphism. But this is obvious since $\rp(\cB) \prec \rp(\cA)$ and
  $\cA$ and $\cB$ are semistable, so any morphism $\cA \to \cB$ is
  actually zero, so \ref{wild} is clear.

  Therefore $X$ is algebraically CM-wild. Assume now that no integer $t$ turns $\cA(t)$
  and $\cB(t)$ into initialized Ulrich sheaves.
  Recall that, by construction, the sheaves appearing in the families
  provided by \cite[Theorem
  A]{faenzi-pons:arxiv} are
  extensions of copies of $\cA$ and $\cB$. If a sheaf $\cE$ is an
  extension of say $a$ copies of $\cA$ and
  $b$ copies of $\cB$, it suffices to prove that $\cE$ is actually
  non-Ulrich, as soon as $a,b$ are both non-zero.

  To check this,  by contradiction we let $t$ be an integer that
  initializes $\cE$ as Ulrich sheaf, i.e. such that
  $\HH^*(X,\cE(t-j))=0$ for all $1 \le j \le n$. Since $\cA$ and
  $\cB$ are ACM by assumption, we have the vanishing $\HH^i(X,\cA(t-j))=\HH^i(X,\cB(t-j))$ for $1
  \le i \le n-1$ and for all $j \in \ZZ$.

  By definition, $\cE$ fits into an exact sequence of the form:
  \[
  0 \to \cA^{\oplus a} \to \cE \to \cB^{\oplus b} \to 0,
  \]
  where we may assume $a\ne 0 \ne b$. Therefore, from the vanishing $\HH^i(X,\cE(t-j))=0$ we deduce
  $\HH^0(X,\cA(t-j))=0=\HH^n(X,\cB(t-j))$ for $1 \le j \le n$.

  Now, if $n\ge 2$, because $\cA$ is ACM, the vanishing
  $\HH^1(X,\cA(t-j)))$ takes place for all $j \in \ZZ$ so we see that
  $\HH^0(X,\cE(t-j))=0$ implies $\HH^0(X,\cB(t-j))=0$ for $1 \le j \le
  n$. This implies that $\cB$ is Ulrich,
  initialized by $t$,
  and similarly we get that the holds true for $\cA$. But this is
  excluded, and  we conclude that \ref{n>1} holds.

  With the same setup we can prove also \ref{n=1}. Indeed, when $X$ is
  a curve, a coherent sheaf $\cE$ is Ulrich if and only if
  there is $t \in \ZZ$ such that $\HH^i(X,\cE(t))=0$ for all $i$, which
  implies $\HH^0(X,\cA(t)) = 0$ and $\HH^1(X,\cB(t)) = 0$.  But
  our assumption implies that there is no $t \in \ZZ$ such that $\HH^0(X,\cA(t))$ and $\HH^1(X,\cB(t))$
  vanish together, so $\cE$ is not Ulrich.
\end{proof}

\section{Varieties of non-minimal degree} \label{higher degree}

Let  $X \subset \PP^N$ be a non-degenerate
closed subscheme of dimension $n$ over an algebraically closed field $\kk$. Assume $X$ is
reduced and ACM.
Put $d = \deg(X)$.

We mentioned in \S \ref{remarks} that, in order to prove Theorem
\ref{main1}, we can assume $n \ge 2$.
In this section we would like to treat the case when $X$ is not of
\textit{minimal degree}, which is to say $d \ge N-n+2$.

We first look at the case $(n,d)=(2,N)$, so $X$ is a surface of
quasi-minimal degree, which we deal with in the next
paragraph. The remaining cases are basically already in
\cite{faenzi-pons:arxiv}, up to the the result, proved in \S
\ref{higher}, that the $c$-th syzygy of an ACM sheaf supported on a
$c$-codimensional linear section is an ACM sheaf which is never Ulrich
when $d \ge N-n+2$.

\subsection{Surfaces of quasi-minimal degree}
\label{section:dP}
For this subsection, $X \subset \PP^N$ is an ACM reduced closed
surface of degree $N$, so $X$ is of quasi-minimal degree.
It turns out that $X$ is locally Gorenstein, namely
$\omega_X \simeq \cO_X(-1)$, see \cite[Corollary 4.1.5]{migliore:liaison}.

Let us first observe that, if $X$ is reducible, then $X$ is non-Ulrich
CM-wild. Indeed, in view of \cite[\S 7.1]{faenzi-pons:arxiv} and since
$n \ge 2$, we are in position to apply \cite[Theorem
5.2]{faenzi-pons:arxiv} and show that $X$ is CM-wild. However, the
sheaves obtained in this way are not Ulrich. Indeed, such a sheaf $\cE$ appears as an
extension of an ACM sheaf $\cF_1(q)$, on a first component $X_1$ of
$X$, for some $q > 0$, and the structure sheaf of a second component $X_2$ of $X$, and
the resulting sheaf $\cE$ is not Ulrich for $q \gg 0$.
So we may assume until the end of the section that $X$ is integral.

\subsubsection{Syzygies of Ulrich bundles}

Let $X$ be an integral ACM surface of quasi-minimal degree.
We first assume that $X$ is not a cone.
In view of \cite[\S 7.4]{faenzi-pons:arxiv}, there exist stable
initialized Ulrich 
bundles $\cE_1$ and $\cE_2$ of rank $2$ on $X$ and determinant $\cO_X(2)$, such that:
\begin{equation} \label{Eij}
  \begin{aligned}
  & \Hom_X(\cE_i,\cE_i)=\kk \id_{\cE_i}, && \mbox{for $i \in \{1,2\}$,} \\
  & \Hom_X(\cE_i,\cE_j)=0, && \mbox{if $\{i,j\}=\{1,2\}$,} \\
  & \dim \Ext^1_X(\cE_i,\cE_i) = 5, && \mbox{for $i \in \{1,2\}$,} \\
  & \dim \Ext^1_X(\cE_i,\cE_j) = 4, && \mbox{if $\{i,j\}=\{1,2\}$,} \\
  & \Ext^p_X(\cE_i,\cE_j) = 0, && \mbox{for $i,j \in \{1,2\}$ and $p
    \ge 2$.}
\end{aligned}
\end{equation}

Consider the evaluations of global sections, for $i \in \{1,2\}$:
\[
  e_{\cE_i} : \HH^0(\cE_i) \otimes \cO_X \to \cE_i.
\]
We define $\cA_i=\ker(e_{\cE_i})^\vee$, for $i \in \{1,2\}$.

\begin{proposition}
  The sheaves $\cA_1$ and $\cA_2$ are simple, ACM vector bundles which are not Ulrich. They
  satisfy:
  \begin{equation}
    \label{Aij}
    \begin{aligned}
      & \Hom_X(\cA_i,\cA_i)=\kk \id_{\cA_i}, && \mbox{for $i \in \{1,2\}$,} \\
      & \Hom_X(\cA_i,\cA_j)=0, && \mbox{if $\{i,j\}=\{1,2\}$,} \\
      & \dim \Ext^1_X(\cA_i,\cA_i) = 5, && \mbox{for $i \in \{1,2\}$,} \\
      & \dim \Ext^1_X(\cA_i,\cA_j) = 4, && \mbox{if $\{i,j\}=\{1,2\}$,} \\
      & \Ext^p_X(\cA_i,\cA_j) = 0, && \mbox{for $i,j \in \{1,2\}$ and $p
        \ge 2$.}
    \end{aligned}
  \end{equation}
\end{proposition}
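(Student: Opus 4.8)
The plan is to work throughout with the locally free sheaf $\cK_i:=\ker(e_{\cE_i})$, so that $\cA_i=\cK_i^\vee$. Since $\cE_i$ is a bundle, the surjection $e_{\cE_i}$ has locally free kernel, hence $\cK_i$ and $\cA_i$ are bundles; applying $\cHom(-,\cO_X)$ to the defining sequence of $\cK_i$ produces the two short exact sequences
\begin{equation*}
0\to \cK_i\to \HH^0(\cE_i)\otimes\cO_X\to \cE_i\to 0,\qquad 0\to \cE_i^\vee\to \HH^0(\cE_i)^\vee\otimes\cO_X\to \cA_i\to 0 .
\end{equation*}
Every computation below will use only \eqref{Eij}, the facts that $\cO_X$ and $\cE_i$ are ACM with $\HH^0_*(\cE_i)$ generated in degree $0$ (linearity of the Ulrich resolution) and $\HH^0(\cE_i(k))=0$ for $k<0$, and Serre duality on $X$, which is available because $X$ is Gorenstein with $\omega_X\simeq\cO_X(-1)$; recall also $\det\cE_i=\cO_X(2)$, so $\cE_i^\vee\simeq\cE_i(-2)$.

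First I would check that $\cA_i$ is ACM. Being a bundle it is locally Cohen--Macaulay, so it suffices to prove $\HH^1_*(\cA_i)=0$. By Serre duality $\HH^1(\cA_i(m))^\vee\simeq\Ext^1_X(\cA_i,\cO_X(-1-m))\simeq\HH^1(\cK_i(-1-m))$, and twisting the first sequence above by $-1-m$ (using $\HH^1_*(\cO_X)=0$) identifies $\HH^1(\cK_i(-1-m))$ with the cokernel of the multiplication map $\HH^0(\cE_i)\otimes\HH^0(\cO_X(-1-m))\to\HH^0(\cE_i(-1-m))$. For $-1-m\ge 0$ this map is surjective because $\HH^0_*(\cE_i)$ is generated in degree $0$; for $-1-m<0$ its target vanishes. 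Either way the cokernel is $0$, so $\HH^1_*(\cA_i)=0$ and $\cA_i$ is ACM.

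Next I would show $\cA_i$ is not Ulrich. From the second sequence and $\HH^0(\cE_i^\vee)=\HH^0(\cE_i(-2))=0$ one reads off $\HH^0(\cA_i(-1))=0$ and $\HH^0(\cA_i)\simeq\HH^0(\cE_i)^\vee$, of dimension $rd=2N$, where $d=\deg X=N$ and $r=2$. Thus the least twist of $\cA_i$ carrying a section is $0$, so if $\cA_i$ were Ulrich it would have to be initialized, forcing $h^0(\cA_i)=\rk(\cA_i)\,d$; but $\rk(\cA_i)=\rk(\cK_i)=h^0(\cE_i)-2=2N-2$, and $2N\ne(2N-2)N$ for all $N\ge 3$, which is the only relevant range (a non-degenerate surface of degree $N$ in $\PP^N$ forces $N\ge 3$). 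This contradiction rules out Ulrichness.

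Finally, for \eqref{Aij} I would pass to the $\cK$'s via the duality $\Ext^p_X(\cA_i,\cA_j)\simeq\Ext^p_X(\cK_j,\cK_i)$ (valid since the $\cK$'s are bundles) and compute $\Ext^\bullet_X(\cK_j,\cK_i)$ from the two defining sequences. Applying $\Ext^\bullet_X(\cK_j,-)$ to $0\to\cK_i\to\HH^0(\cE_i)\otimes\cO_X\to\cE_i\to 0$ and using $\Ext^p_X(\cK_j,\cO_X)=\HH^p(\cA_j)$ --- which equals $\HH^0(\cE_j)^\vee$ for $p=0$ and vanishes for $p=1$ (just proved) and $p=2$ (Serre duality together with $\HH^0(\cK_j(-1))=0$) --- yields $\Ext^p_X(\cK_j,\cK_i)\simeq\Ext^{p-1}_X(\cK_j,\cE_i)$ for $p\ge 2$ and a four-term exact sequence
\begin{equation*}
0\to\Hom_X(\cK_j,\cK_i)\to\HH^0(\cE_i)\otimes\HH^0(\cE_j)^\vee\to\Hom_X(\cK_j,\cE_i)\to\Ext^1_X(\cK_j,\cK_i)\to 0 .
\end{equation*}
Applying $\Ext^\bullet_X(-,\cE_i)$ to $0\to\cK_j\to\HH^0(\cE_j)\otimes\cO_X\to\cE_j\to 0$ and using $\HH^1(\cE_i)=\HH^2(\cE_i)=0$ gives $\Ext^p_X(\cK_j,\cE_i)\simeq\Ext^{p+1}_X(\cE_j,\cE_i)$ for $p\ge 1$, which vanishes by \eqref{Eij}; hence $\Ext^p_X(\cA_i,\cA_j)=0$ for $p\ge 2$, and the same computation gives a second four-term exact sequence
\begin{equation*}
0\to\Hom_X(\cE_j,\cE_i)\to\HH^0(\cE_j)^\vee\otimes\HH^0(\cE_i)\to\Hom_X(\cK_j,\cE_i)\to\Ext^1_X(\cE_j,\cE_i)\to 0 .
\end{equation*}
Up to the obvious transposition, the two four-term sequences share the same central object and, crucially, the same central map --- the canonical map ``restrict along $\cK_j\hookrightarrow\HH^0(\cE_j)\otimes\cO_X$, then evaluate into $\cE_i$'', once $\HH^0(\cK_j^\vee)$ is identified with $\HH^0(\cE_j)^\vee$ by dualizing the defining sequence of $\cK_j$ --- so their kernels and cokernels coincide: $\Hom_X(\cK_j,\cK_i)\simeq\Hom_X(\cE_j,\cE_i)$ and $\Ext^1_X(\cK_j,\cK_i)\simeq\Ext^1_X(\cE_j,\cE_i)$. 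Transporting back through the duality and inserting \eqref{Eij}, whose off-diagonal entries are symmetric in $i,j$, gives \eqref{Aij}; in particular $\Hom_X(\cA_i,\cA_i)=\kk\,\id_{\cA_i}$, so each $\cA_i$ is simple. The step I expect to be the genuine obstacle is exactly this identification of the two central maps: it is a naturality check that forces one to unwind the construction of both connecting homomorphisms and of the isomorphism $\HH^0(\cK_j^\vee)\simeq\HH^0(\cE_j)^\vee$, whereas the rest is routine diagram chasing with the long exact sequences and \eqref{Eij}.
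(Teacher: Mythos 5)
Your proposal is correct, and it follows the paper's overall strategy --- the same two evaluation/syzygy sequences, the same ACM and non-Ulrich checks, and the same reduction of \eqref{Aij} to \eqref{Eij} --- but the decisive homological step is carried out differently. The paper rewrites the dualized sequence as $0\to\cE_k(-2)\to\cO_X^{\oplus 2N}\to\cA_k\to 0$ via $\cE_k^\vee\simeq\cE_k(-2)$ and then uses the two \emph{total} vanishings $\HH^*(X,\cE_j(-2))=0$ (Ulrich initialized) and $\Ext^*_X(\cA_i,\cO_X)=0$ (Serre duality plus the cohomology of $\cA_i^\vee$) to get direct isomorphisms $\Ext^p_X(\cE_i,\cE_j)\simeq\Ext^{p+1}_X(\cA_i,\cE_j(-2))\simeq\Ext^p_X(\cA_i,\cA_j)$ in every degree, so the middle terms of the long exact sequences vanish outright and there is no connecting-map comparison at all. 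Your route through $\Ext^p_X(\cK_j,\cK_i)$ and the two four-term sequences also works: the naturality check you single out as the obstacle does hold, because under the identification $\Hom_X(\cK_j,\cO_X)\simeq\HH^0(\cE_j)^\vee$ both central maps send an elementary tensor $\lambda\otimes s$ to the composite $\cK_j\hookrightarrow\HH^0(\cE_j)\otimes\cO_X\xrightarrow{\lambda}\cO_X\xrightarrow{s}\cE_i$, so kernels and cokernels agree term by term; and the resulting index swap $\Ext^p_X(\cA_i,\cA_j)\simeq\Ext^p_X(\cE_j,\cE_i)$, versus the paper's $\Ext^p_X(\cE_i,\cE_j)$, is harmless since the off-diagonal data in \eqref{Eij} is symmetric, as you note. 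In short, the paper's twist by $\cO_X(-2)$ buys exactly the elimination of the diagram check, while your version avoids invoking $\HH^*(X,\cE_j(-2))=0$ in this step (needing only $\HH^1(\cE_i)=\HH^2(\cE_i)=0$) at the cost of that check. One small bookkeeping point: to read off $\HH^0(\cA_i(-1))=0$ from the dual sequence you also need $\HH^1(\cE_i^\vee(-1))=\HH^1(\cE_i(-3))=0$, which is immediate from $\cE_i$ being ACM, not just $\HH^0(\cE_i(-2))=0$.
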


\begin{proof}
It is well-known that the module of global sections of an initialized
Ulrich bundle of rank $r$ over $X$ is generated by $Nr$ elements of
degree $0$. Therefore the evaluation maps
$e_{\cE_k}$ are surjective for $k \in \{1,2\}$ and the sheaves $\ker(e_{\cE_k})$  are locally free and ACM of rank $2(N-1)$ on $X$.
In other words, $\HH^1_*(\cA_k^\vee)=0$ for $k \in \{1,2\}$.
Also, by definition of $e_{\cE_k}$, we have $\HH^0(\cA_k^\vee)=0$ for $k \in \{1,2\}$.

Using the isomorphism $\omega_X \simeq \cO_X(-1)$ and Serre duality we get that
$\cA_k$ are ACM bundles on $X$ for $k \in \{1,2\}$.
Also, $\cE_k^\vee \simeq  \cE_k(-2)$, so we have:
\begin{equation}
  \label{EA}
  0 \to \cE_k(-2) \to \cO_X^{\oplus 2N} \to \cA_k \to 0.  
\end{equation}

This gives at once $\HH^0(\cA_k(-1))=0$ and $\dim(\HH^0(\cA_k)) = 2N <
2N(N-1)$ hence $\cA_k$ is not Ulrich, for $k \in \{1,2\}$.

For $i,j,k \in \{1,2\}$, since $\cE_j$ is an initialized Ulrich bundle, we
have $\HH^*(\cE_j(-2))=0$, thus applying $\Hom_X(-,\cE_j(-2))$ to \eqref{EA}
we get $\Ext^p_X(\cE_k,\cE_j) \simeq \Ext^{p+1}_X(\cA_k,\cE_j(-2))$
for all $p \ge 0$. Using Serre duality and the vanishing we already
proved for $\cA_i^\vee$ we get $\Ext_X^*(\cA_i,\cO_X)=0$. Therefore, applying
$\Hom_X(\cA_i,-)$ to \eqref{EA}, we get
$\Ext^{p+1}_X(\cA_i,\cE_k(-2)) \simeq \Ext^p_X(\cA_i,\cA_k)$ for all $p
\ge 0$.
Summing up we get:
\[
  \Ext^p_X(\cE_i,\cE_j) \simeq \Ext^p_X(\cA_i,\cA_j), \qquad \mbox{for
    all $p$ and all $i,j \in \{1,2\}$}.
\]
Hence \eqref{Aij} follows from \eqref{Eij} and the proposition is proved.
\end{proof}

\medskip

If $X$ is a cone, then we use the construction of
\cite[\S 7.3]{faenzi-pons:arxiv}.
This gives two initialized Ulrich sheaves $\cE_1$ and $\cE_2$ of rank
$1$ on $X$ such that:
\[
  \begin{aligned}
  & \Hom_X(\cE_i^\vee,\cE_i^\vee)=\kk \id_{\cE_i}, && \mbox{for $i \in \{1,2\}$,} \\
  & \Hom_X(\cE_i^\vee,\cE_j^\vee)=0, && \mbox{if $\{i,j\}=\{1,2\}$,} \\
  & \dim \Ext^1_X(\cE_i^\vee,\cE_i^\vee) = N+1, && \mbox{for $i \in \{1,2\}$,} \\
  & \dim \Ext^1_X(\cE_i^\vee,\cE_j^\vee) = N, && \mbox{if $\{i,j\}=\{1,2\}$.}
\end{aligned}
\]

Since $\cExt^1_X(\cE_i,\cO_X)=0$ for $i \in \{1,2\}$, this allows to define two reflexive ACM sheaves $\cA_1$ and
$\cA_2$ of rank $(N-1)$ as in the previous proposition and show that
$\cA_1$ and $\cA_2$ are not Ulrich 
as $\HH^0(\cA_k(-1))=0$ and $\dim(\HH^0(\cA_k)) = N <N(N-1)$.
Note that, for $i\in \{1,2\}$, we have $\HH^*(\cE_i^\vee)=0$ by Serre
duality since 
$\omega_X \simeq \cO_X(-1)$ and $\cE_i$ is initialized Ulrich.
Again, $\Ext_X^p(\cA_i,\cO_X)=0$ for all $p$ and $i \in \{1,2\}$ so,
by the same argument as before, we get:
\[
    \begin{aligned}
      & \Hom_X(\cA_i,\cA_i)=\kk \id_{\cA_i}, && \mbox{for $i \in \{1,2\}$,} \\
      & \Hom_X(\cA_i,\cA_j)=0, && \mbox{if $\{i,j\}=\{1,2\}$,} \\
      & \dim \Ext^1_X(\cA_i,\cA_i) = N+1, && \mbox{for $i \in \{1,2\}$,} \\
      & \dim \Ext^1_X(\cA_i,\cA_j) = N, && \mbox{if $\{i,j\}=\{1,2\}$.} 
    \end{aligned}
\]

Summing up, independently on whether $X$ is a cone or not, in view \cite[Theorem A]{faenzi-pons:arxiv},
we get that $X$ is non-Ulrich CM-wild and even strictly CM-wild.

\subsubsection{A second construction for del Pezzo surfaces}

Let us give a second construction, with the further assumption that $X$
is smooth, so $X$ is an anticanonically embedded del Pezzo surface.
This construction has the advantage of being self-contained and the
drawback of relying on the explicit description of $X$ as a blown-up plane or
a quadric surface.
More precisely, recall that $X$ is either a blow-up of $\PP^2$ at $9-d$ points or the
product variety $\PP^1 \times \PP^1$.
We construct ACM bundles (Ulrich or not) on $X$ with the same methods
in both cases, only with a slightly different choice of the
invariants.

If $X$ is a blow-up of $\PP^2$, we fix
a birational surjective morphism $\pi : X \to \PP^2$ and let
$\cO_X(L)=\pi^*(\cO_{\PP^2}(1))$, $M=2L$.
Given $(a,b) \in \NN^2$, with $a \ge 2$,
 we put $D(a,b)=3ab-a^2-b^2+1$ and $b_a=2a$.
In the second case we set $\pi_1$ and
$\pi_2$ to be the projection maps onto the two $\PP^1$ factors and let
$\cO_X(L)=\pi_1^*(\cO_{\PP^1}(1))$ and $\cO_X(F)=\pi_2^*(\cO_{\PP^1}(1))$.
This time we take $(a,b) \in \NN^2$ with $a \ge 1$ and we put $D(a,b)=4ab-a^2-b^2+1$, $b_a=3a$, $M=2L+F$.

\begin{proposition}
Choose $(a,b)$ so that $D(a,b)>0$
and $b \ge b_a$. Then, for $f$ general enough in
$\Hom_X(\cO_X(L)^{\oplus b},\cO_X(M)^{\oplus a})$, the sheaf $\cE=\ker(f)$ is simple, locally
free and ACM, with $\dim \Ext^1_X(\cE,\cE)=D(a,b)$ and
$\Ext^2_X(\cE,\cE)=0$; $\cE$ is not Ulrich
when $b>b_a$.
\end{proposition}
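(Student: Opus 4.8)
The plan is to derive every assertion from the natural presentation of $\cE$. Since $\cO_X(M-L)$ — which equals $\cO_X(L)$ in the blow-up case and $\cO_X(L+F)$ for $X=\PP^1\times\PP^1$ — is globally generated and $b\ge b_a\ge a+2$ in both cases, the locus in $\Hom_X(\cO_X(L)^{\oplus b},\cO_X(M)^{\oplus a})$ where the associated $a\times b$ matrix of sections of $\cO_X(M-L)$ drops rank is of positive expected codimension, hence empty for a general $f$; thus a general $f$ is surjective and $\cE=\ker(f)$ is a vector bundle of rank $b-a$ fitting in
\begin{equation}\label{eq:presE}
0 \to \cE \to \cO_X(L)^{\oplus b} \xrightarrow{\ f\ } \cO_X(M)^{\oplus a} \to 0 .
\end{equation}
The cohomological input I would use throughout is that $\cO_X(L)$ is an \emph{ACM} line bundle, i.e. $\HH^1_*(\cO_X(L))=0$: for $X=\PP^1\times\PP^1$ this is the Künneth formula, and for $X\to\PP^2$ a blow-up at $k=9-d\le 6$ general points it follows from Kawamata--Viehweg vanishing in the effective range together with the classical non-speciality of the plane systems $|(3s-1)H_{\PP^2}-s(p_1+\cdots+p_k)|$, $s\ge 1$. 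By contrast $\cO_X(M)$ need \emph{not} be ACM (it fails for $d\in\{3,4,5\}$), and one must be careful not to use it.

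Next, $\cE$ is ACM: twisting \eqref{eq:presE} by $\cO_X(tH)$ and using $\HH^1(\cO_X(L+tH))=0$, the group $\HH^1(\cE(tH))$ is the cokernel of $\HH^0(\cO_X(L+tH))^{\oplus b}\to\HH^0(\cO_X(M+tH))^{\oplus a}$, which is onto — trivially for $t<0$, where the target vanishes, and for $t\ge 0$ because the multiplication map $\HH^0(\cO_X(L+tH))\otimes\HH^0(\cO_X(M-L))\to\HH^0(\cO_X(M+tH))$ is surjective (Künneth, resp.\ a Castelnuovo--Mumford regularity argument after applying $\pi_*$) and $b\ge b_a$ columns suffice for a general $f$. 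For the $\Ext$ groups I would dualize \eqref{eq:presE} and combine with $\HH^1_*(\cO_X(L))=0$ and Serre duality to obtain $\Ext^{\ge 1}_X(\cE,\cO_X(L))=\Ext^{\ge 1}_X(\cE,\cO_X(M))=0$; applying $\Hom_X(\cE,-)$ to \eqref{eq:presE} then gives $\Ext^2_X(\cE,\cE)=0$ at once and the exact sequence
\[
0\to\Hom_X(\cE,\cE)\to\Hom_X(\cE,\cO_X(L))^{\oplus b}\to\Hom_X(\cE,\cO_X(M))^{\oplus a}\to\Ext^1_X(\cE,\cE)\to 0 ,
\]
whose outer terms are computed from \eqref{eq:presE}. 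Since a Riemann--Roch computation on \eqref{eq:presE} gives $\chi(\cE,\cE)=1-D(a,b)$ and $\Ext^2_X(\cE,\cE)=0$, the identities $\Hom_X(\cE,\cE)=\kk$ and $\dim\Ext^1_X(\cE,\cE)=D(a,b)$ are equivalent, so the whole statement reduces to simplicity.

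For simplicity I would twist \eqref{eq:presE} by $\cO_X(-L)$, getting $0\to\cE(-L)\to\cO_X^{\oplus b}\to\cO_X(M-L)^{\oplus a}\to 0$ with matrix entries in $\HH^0(\cO_X(M-L))$. In the blow-up case this sequence is pulled back from $\PP^2$, so $\cE(-L)=\pi^*\cE'$ with $\cE'=\ker(\cO_{\PP^2}^{\oplus b}\to\cO_{\PP^2}(1)^{\oplus a})$; for $X=\PP^1\times\PP^1$ one has $\cE(-L)=\ker(\cO_X^{\oplus b}\to\cO_X(L+F)^{\oplus a})$. For a general defining map the dual of these kernels is a Steiner bundle (with respect to $\cO_{\PP^2}(1)$, resp.\ the very ample $\cO_X(L+F)$), hence simple; and since $\pi_*\cO_X=\cO_{\PP^2}$, the projection formula shows $\pi^*$ of a simple sheaf is simple, so $\cE(-L)$, and therefore $\cE$, is simple for a general $f$. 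That local freeness, the ACM property, $\Ext^2_X(\cE,\cE)=0$ and simplicity hold for one and the same general $f$ is clear, as each is a non-empty open condition on $f$ in the irreducible affine space of maps $\cO_X(L)^{\oplus b}\to\cO_X(M)^{\oplus a}$. This yields $\dim\Ext^1_X(\cE,\cE)=D(a,b)$.

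Finally, when $b>b_a$ one reads off from \eqref{eq:presE} that $\HH^0(\cE(-H))=0$ (the line bundle $\cO_X(L-H)$ has no sections) while $h^0(\cE)=3(b-2a)$ in the blow-up case, resp.\ $2(b-3a)$ for $\PP^1\times\PP^1$, which is strictly positive exactly when $b>b_a$; so $\cE$ is initialized by $0$. But $3(b-2a)<3(b-a)\le d\,(b-a)$ (resp.\ $2(b-3a)<8(b-a)$) shows $h^0(\cE)<\rk(\cE)\deg(X)$, hence $\cE$ is not Ulrich. The only step requiring an idea is the simplicity of the generic $\cE$, handled above by reduction to Steiner bundles; everything else is bookkeeping on \eqref{eq:presE}, the one mildly delicate ingredient being the ACM-ness of $\cO_X(L)$ on del Pezzo surfaces of low degree, for which one invokes the classical non-speciality of plane systems with up to six general multiple base points.
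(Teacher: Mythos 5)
Your skeleton matches the paper's: Bertini-type surjectivity of a general $f$, the presentation $0\to\cE\to\cO_X(L)^{\oplus b}\to\cO_X(M)^{\oplus a}\to 0$, the reduction --- via $\chi(\cE,\cE)=1-D(a,b)$ and $\Ext^2_X(\cE,\cE)=0$ --- of the $\Ext^1$ count to simplicity, and the $h^0$ computation for non-Ulrichness. The main gap is the ACM step. Surjectivity of the multiplication map $\HH^0(\cO_X(L+tH))\otimes\HH^0(\cO_X(M-L))\to\HH^0(\cO_X(M+tH))$ does \emph{not} imply that a general $a\times b$ matrix with entries in $\HH^0(\cO_X(M-L))$ induces a surjection $\HH^0(\cO_X(L+tH))^{\oplus b}\to\HH^0(\cO_X(M+tH))^{\oplus a}$: your phrase ``$b\ge b_a$ columns suffice for a general $f$'' is exactly the nontrivial maximal-rank statement, and it sits at the sharp threshold, since at $t=0$ the source and target have dimensions $3b$ and $6a$ (blow-up case), resp. $2b$ and $6a$ ($\PP^1\times\PP^1$), so $b\ge b_a$ is just the necessary inequality. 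Nor is it a routine degeneration: on the blow-up, a block-diagonal matrix of $1\times 2$ blocks $(l_1,l_2)$ has corank $a$ on global sections, so the obvious specialization fails in the range $2a\le b<3a$. This is precisely where the paper invokes Ellia--Hirschowitz (blow-up case, using $b\ge b_a=2a$) and, for $\PP^1\times\PP^1$, specializes to $f_0$ with kernel $\cO_X(L)^{\oplus b-3a}\oplus\cF^{\oplus a}$, where $\cF=\ker(\cO_X(L)^{\oplus 3}\to\cO_X(M))$ is shown to be Ulrich, and then uses semicontinuity. You need one of these inputs here; note also that your values $h^0(\cE)=3(b-2a)$, resp. $2(b-3a)$, in the non-Ulrich step rest on this same vanishing $\HH^1(\cE)=0$.

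A second, smaller issue is that your simplicity step quotes a ``fact'' that is false as stated: generic Steiner bundles are not always simple (on $\PP^2$ the cokernel of a general $\cO_{\PP^2}(-1)\to\cO_{\PP^2}^{\oplus 4}$ is $T_{\PP^2}(-1)\oplus\cO_{\PP^2}$). The correct statement holds exactly under the Schur-root condition for the Kronecker quiver with $3$, resp. $4$, arrows, i.e. it is here that the hypothesis $D(a,b)>0$ must enter --- and your argument never uses it. Moreover, for $\PP^1\times\PP^1$ the ``Steiner with respect to $\cO_X(L+F)$, hence simple'' claim is not a standard quotable result independent of this quiver input; what is true, and what the paper uses (Kac's theorem via the argument of Costa--Mir\'o-Roig--Pons-Llopis), is that for $D(a,b)>0$ the general representation of the relevant Kronecker quiver has trivial endomorphism ring, which then transfers to $\cE$. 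Your reduction to kernel/cokernel bundles is fine in spirit, but you must state the numerical hypothesis and cite or reprove this input; without it the simplicity argument does not stand.
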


\begin{proof}
  Note that $b \ge b_a\ge a$ and that the locally free sheaf
  $\cH=\cHom_X(\cO_X(L)^{\oplus b},\cO_X(M)^{\oplus a}) \simeq
  \cO_X(M-L)^{\oplus b a}$ is globally generated.
  Therefore, for a general choice of $f \in \HH^0(\cH)$ and for any
  integer $k \in \{0,\ldots,a-1\}$, the degeneracy locus $D_k(f)$
  defined by the $(k+1)$-minors of the
  associated map $f : \cO_X(L)^{\oplus b} \to \cO_X(M)^{\oplus a}$ has
  codimension $(b-k)(a-k)$ in $X$ in view of a Bertini-type result, see for instance
  \cite[Teorema 2.8]{ottaviani:codim}, or \cite[Lemma
  11.6]{andrade-jardim-correa}.
  In particular, for $k=a-1$, since $b-a+1 \ge b_a-a+1 \ge 2a-a+1 \ge
  a+1 \ge 3$ and $\dim(X)=2$, we have $D_{a-1}(f)=\emptyset$ so $f$ is surjective.

  Then, the sheaf $\cE =
  \ker(f)$ is locally free of rank $b-a \ge 2$. We write down the exact sequence:
  \begin{equation}
    \label{LM}
  0 \to \cE \to \cO_X(L)^{\oplus b} \to \cO_X(M)^{\oplus a} \to 0.
  \end{equation}

  Next, observe  that the $\kk$-vector space
  $\Hom_X(\cO_X(L),\cO_X(M))$ has dimension $3$ or $4$ depending on
  whether $X$ is a blow-up of $\PP^2$ or $X \simeq \PP^1 \times \PP^1$.
  In both cases, the assumption $D(a,b)>0$ ensures that \cite[Theorem
  4]{kac} applies (cf. the argument of \cite[Proposition 3.5
  (i)]{costa-miro_roig-pons_llopis}) and shows that $\cE$ is simple if
  $f$ is general enough. The same argument proves $\dim
  \Ext^1_X(\cE,\cE)=D(a,b)$ and $\Ext^2_X(\cE,\cE)=0$.

  Next, we show that $\cE$ is ACM.   Note that $\cO_X(L)$ is ACM for
  the polarization $H$ and that $\HH^0(\cO_X(M+t H))=0$ for any integer
  $t \le -1$, so \eqref{LM} gives $\HH^1(\cE(t H))=0$ for all $t \le -1$.
  Also, Serre duality gives $\HH^k(\cO_X(L-H))=\HH^k(\cO_X(M-H))=0$
  for all $k$. So once we make sure that
  $\HH^1(\cE)=0$, we will get that $\cE(H)$ is $H$-regular and hence
  $\HH^1(\cE(t H))=0$ for $t \ge 0$, so that $\cE$ will be proved to be ACM.

  So let us prove that $\HH^1(\cE)=0$. If $X$ is a blow-up of $\PP^2$, this
  follows from \cite[Propositions 1.1 and  4.1]{ellia-hirschowitz:ouest} in view of the assumption $b \ge b_a$.
  When $X \simeq \PP^1 \times \PP^1$, first we note that the condition $\HH^1(\cE)=0$
  is open
  on flat families, and that:
  \[
    \{\ker(f) \mid \mbox {$f \in \HH^0(\cH)$ gives a surjective map $\cO_X(L)^{\oplus b} \to \cO_X(M)^{\oplus a}$} \}
  \]
  defines a family of vector bundles on $X$ which is indeed flat.

  In view of this discussion, in order to get the statement for
  general $f \in \HH^0(\cH)$, it suffices
  to prove it for one
  choice of $f_0 \in \HH^0(\cH)$, provided that the associated $f_0 :
  \cO_X(L)^{\oplus b} \to \cO_X(M)^{\oplus a}$ is surjective.
  To choose a convenient element $f_0$, note that again a Bertini-type
  argument ensures that, for a general choice of $g \in
  \Hom_X(\cO_X(L)^{\oplus 3}, \cO_X(M))$, the map $g$ is surjective.
  Define $\cF=\ker(g)$, so:
  \begin{equation}
    \label{LM0}
  0 \to \cF \to \cO_X(L)^{\oplus 3} \to \cO_X(M) \to 0.
  \end{equation}
  Then, choose $f_0$ to be a diagonal map consisting of $a$ copies of $g$
  as above, completed by $b-b_a=b-3a$ zeroes. We get thus a surjective
  map $f_0:\cO_X(L)^{\oplus b} \to \cO_X(M)^{\oplus a}$ and $\cE_0 = \ker(f_0)\simeq \cO_X(L)^{b-3a} \oplus
  \cF^{\oplus a}$.

  We still have to prove $\HH^1(\cE_0)=0$. To do it, it suffices to
  show $\HH^1(\cF)=0$. In turn, we use an
  argument analogous to
  \cite[Proposition 5.9]{eisenbud-schreyer-weyman} to show this and
  actually prove that $\cF$ is Ulrich on $(X,H)$, where $H=2L+2F$. Indeed,
  $c_1(\cF)=L-F$ so $\cF\simeq \cF^\vee(L-F)$ and the dual of
  \eqref{LM0} yields the exact
  sequence:
  \[
    0 \to \cO_X(-L-2F)  \to \cO_X(-F)^{\oplus 3} \to \cF \to 0.
  \]
  This implies immediately $\HH^*(\cF)=0$. Also,
  \eqref{LM0} gives:
  \[
  0 \to \cF(-H) \to \cO_X(-L-2F)^{\oplus 3} \to \cO_X(-F) \to 0.
  \]
  which implies $\HH^*(\cF(-H))=0$, so that $\cF$ is Ulrich.

  We have thus proved that $\cE$ is ACM. Finally, $\cE$ is not Ulrich
  as soon as $b>b_a$. Indeed, from \eqref{LM} we have
  $\HH^0(\cE(-H))=0$ so we have that $\cE$ is not Ulrich as soon as we
  show:
  \[
    0 < \chi(\cE) < d(b-a).
  \]
  
    Now, on one hand, the assumption $b>b_a$ guarantees
    $\chi(\cE)>0$. On the other hand, when $X$ is a blow-up of $\PP^2$
    we get $\chi(\cE)=3(b-2a)<3(b-a) \le d(b-a)$, while for $X \simeq
    \PP^1 \times \PP^1$ (hence $d=8$) we have
    $\chi(\cE)=2(b-4a)<2(b-a) \le 8(b-a)$. In both cases the desired
    equality holds and the statement is proved.
\end{proof}

\begin{remark}
  The previous proof  actually implies that, for $X \simeq \PP^1
  \times \PP^1$ embedded by $2L+2F$ and $b=3a$,
  the sheaf $\cE$ obtained by $f$ as in the previous
  proposition is a simple
  Ulrich bundle of rank $2a$.
\end{remark}

The previous proposition shows the geometric version of Theorem \ref{main1} when $X$ is a del
Pezzo surface. Indeed, choosing for instance $b=b_a+1$, the above construction
provides families of non-Ulrich pairwise non-isomorphic indecomposable bundles,
whose dimension grows as a quadratic
function of $a$.

\subsection{The higher range}

\label{higher}

According to \S\ref{section:dP}, we have to justify that Theorem \ref{main1} holds for all non-degenerate
reduced closed ACM subschemes $X \subset \PP^N$ of dimension $n \ge 2$
and degree $d$, provided that $(d,n,N)$ lie in the \textit{higher
  range}, namely $d \ge N-n+3$, or $n\ge 3$ and $d = N-n+2$.
We essentially extract this from
\cite{faenzi-pons:arxiv} up to the result, proved below, that syzygies
of Ulrich sheaves are never Ulrich.

Indeed,  \cite[Theorem
4.2]{faenzi-pons:arxiv} already asserts that, when a subscheme $X$ of
dimension at least two as
above is in the range $d \ge N-n+2$ (i.e. $X$ is
not of \textit{minimal degree}), then it is of wild CM-type.

What we do here is to show that the proof of \cite[Theorem
4.2]{faenzi-pons:arxiv} already yields non-Ulrich sheaves.
Looking into this proof we see that it
proceeds by reduction to a transverse linear section $Y$ of $X$ of
dimension one in case $d \ge N-n+3$, or of dimension two in case $d =
N-n+2$. Namely, setting $c$ for the codimension of $Y$ in $X$, one
first constructs families of arbitrarily large dimension of
indecomposable pairwise non-isomorphic ACM sheaves $\cF$ on $Y$.
Then, one considers a minimal graded free resolution $F_{\bigcdot} \to F$ of the finitely
generated $\kk[X]$-module
$F=\HH^0_*(\cF)$, of the form:
\begin{equation}
  \label{resF}
0 \leftarrow F \leftarrow F_0 \leftarrow F_1 \leftarrow \cdots \leftarrow F_{\ell-1} \xleftarrow{d_\ell} F_\ell \leftarrow \cdots
\end{equation}
where, for all $i \ge 0$, $F_i$ is a finitely-generated free
$\kk[X]$-module.
For $i \ge 0$, we write  $\Sigma_i^X(F)=\im(d_i)$ and $\Sigma_i^X(\cF)$ for the sheafification of $\Sigma_i^X(F)$.

The sheaf $\cE=\Sigma_c^X(\cF)$ is ACM over $X$.
One shows via \cite[Theorem B]{faenzi-pons:arxiv} that the families
of sheaves $\cE$ constructed in this way are still made-up of
indecomposable pairwise non-isomorphic ACM sheaves, provided that the
sheaves $\cF$ are actually Ulrich.

Having this in mind, we consider the following setup.
Let $X \subset \PP^N$ be a non-degenerate reduced closed ACM subscheme of
dimension $n$ and degree $d$ over a field $\kk$. Let $M \subset \PP^N$ be a
linear subspace of codimension $c \ge 1$ and assume
 $Y=X \cap M$ is of dimension $n-c \ge 1$.
 Let  $\cF$ be an ACM sheaf on $Y$. 
 By  \S\ref{section:dP} and the discussion above, Theorem \ref{main1} holds for $d \ge N-n+2$, once
we prove the following result.

\begin{theorem}
The sheaf $\cE=\Sigma^X_c(\cF)$ is Ulrich on $X$ if and only if $\cF$
is Ulrich on $Y$ and $d=N-n+1$.
\end{theorem}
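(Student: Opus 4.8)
The plan is to compute the cohomology of $\cE = \Sigma^X_c(\cF)$ in terms of that of $\cF$ using the minimal graded free resolution \eqref{resF} broken into short exact sequences of sheaves, together with the ACM hypothesis, which guarantees that the free modules $F_i$ are sums of copies of $\cO_X(-j)$ with intermediate cohomology vanishing. First I would record the two short exact sequences that structure the problem. On one hand, since $\cF$ is an ACM sheaf on $Y = X \cap M$ with $M$ of codimension $c$, the Koszul complex of the linear forms cutting out $M$ resolves $\cF$ over $X$ down from $\cO_Y$-type modules; more directly, for the purposes of cohomology one has for each step $0 \le i \le c-1$ a short exact sequence $0 \to \Sigma^X_{i+1}(\cF) \to F_i \to \Sigma^X_{i}(\cF) \to 0$ with $\Sigma^X_0(\cF) = \cF$ (viewed as a sheaf on $X$) and $F_i$ a direct sum of $\cO_X(-a_{i,\ell})$. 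Since $X$ is ACM, each $F_i$ has $\HH^j_*(F_i) = 0$ for $1 \le j \le n-1$, so these sequences let me shift cohomology: for $1 \le j \le n - i - 1$ one gets $\HH^j_*(\Sigma^X_{i+1}(\cF)) \cong \HH^{j-1}_*(\Sigma^X_i(\cF))$ in the relevant range, while $\HH^0$ and $\HH^n$ interact with the cohomology of the $F_i$'s.

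Next I would exploit the characterization recalled in the excerpt: an ACM sheaf $\cG$ on $X$ (of dimension $n$) is Ulrich initialized by $t$ iff $\HH^*(X, \cG(t-j)) = 0$ for all $1 \le j \le n$; and the analogous statement on $Y$ (dimension $n-c$) for $\cF$ with twists $1 \le j \le n-c$. So the strategy is: assume $\cF$ is Ulrich on $Y$, normalize so it is initialized, and track through the $c$ sequences above to see exactly which twists of $\cE$ have vanishing cohomology. The key numerical input is the graded Betti numbers of $\cO_Y$ as a $\kk[X]$-module, equivalently the resolution of $\kk[X]/(\text{linear section})$. When $d = N - n + 1$ (minimal degree), $\kk[X]$ itself has a linear (Eagon–Northcott type) resolution and the linear section is as simple as possible, so the syzygy modules $F_i$ of an Ulrich $\cF$ are generated in a single degree and the shifting is "clean" — the vanishing range propagates to give that $\cE$ is Ulrich on $X$. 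When $d \ge N - n + 2$, the resolution of $\kk[X]$ (or of $\cO_Y$ over $X$) is no longer linear, so there is an extra generator of $F_{c}$ (or of an earlier $F_i$) in a higher degree, and this spoils one of the required vanishings $\HH^0(X, \cE(t-j)) = 0$ or $\HH^n(X, \cE(t-j)) = 0$ — producing a nonzero cohomology group exactly one step outside what Ulrich allows. For the converse, if $\cE$ is Ulrich on $X$ I would run the sequences in reverse: the vanishing of all cohomology of $\cE$ in a window of $n$ consecutive twists forces, via the same shifting and the known generation degrees of the $F_i$, both that $\cF$ is Ulrich on $Y$ and that no higher-degree generator can appear, i.e. $d = N-n+1$.

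The main obstacle, and the step I would spend the most care on, is the bookkeeping of graded degrees: making precise the statement "the last syzygy $F_c$ of an initialized Ulrich $\cF$ picks up a generator in degree strictly bigger than expected precisely when $d > N-n+1$." This requires knowing the graded Betti table of $\kk[X]$ (for $X$ ACM of degree $d$ and codimension $N-n$) well enough to see where linearity fails — concretely, that the $(N-n)$-th free module in the resolution of $\kk[X]$ has a generator in degree $N-n+1$ if and only if $d \ge N-n+2$ — combined with the fact that an Ulrich module over $\kk[Y]$ is (after initializing) generated in degree $0$ with linear $\kk[Y]$-resolution. Tensoring / changing rings from $\kk[Y]$ to $\kk[X]$ via the Koszul complex of the linear section then gives the graded $\kk[X]$-resolution of $\cF$, and I would read off the degrees of the $F_i$ from this, the one "non-linear" contribution coming exactly from the non-linear part of the resolution of $\kk[X]$. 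Once those degrees are pinned down, translating them into the cohomological Ulrich criterion for $\cE$ is routine diagram-chasing through the $c$ short exact sequences.
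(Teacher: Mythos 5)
Your toolkit (splitting the resolution into short exact sequences, the cohomological characterization of Ulrich sheaves, and shifting cohomology using that the $F_i$ are sums of twists of $\cO_X$) does reproduce one half of the paper's argument: it yields the chain of injections $\HH^{n-c}(Y,\cF(c-n)) \hookrightarrow \HH^{n}(X,\cE(c-n))$ that handles the case where $\cF$ is not Ulrich. But two essential ingredients are missing. First, a non-vanishing at one end only contradicts Ulrichness if you can pin down where the window of $n$ vanishing twists must sit: concretely you need $\HH^0(X,\cE(c)) \ne 0$, i.e.\ that $E=\Sigma^X_c(F)$ has a minimal generator in degree exactly $c$ (a priori its generators only have degrees $\ge c$, so the window could sit higher up and escape your $\HH^n$ obstruction). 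The paper obtains this by a step that has no counterpart in your plan: a nonzero section $\kk[Y]\to F$ is lifted to a map of complexes from the Koszul resolution $K_{\bigcdot}$ of $\kk[Y]$ over $\kk[X]$ to $F_{\bigcdot}$, proved injective in each homological degree by a degree argument, hence split; this embeds $\kk[X](-c)=\Sigma^X_c(\kk[Y])$ into $E$ and, dually, produces a nonzero section of $\cE^\vee(1-c)$.

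Second, your proposed mechanism for the case $d\ge N-n+2$ --- an extra generator of $F_c$ in higher degree caused by non-linearity of the resolution of $\kk[X]$ --- does not work. The resolution of $\cO_Y$ over $\kk[X]$ is the Koszul complex on $c$ linear forms, hence linear for every $d$, and the non-linear part of the $R$-resolution of $\kk[X]$ need not appear in the generator degrees of $F_c$ at all. For example, let $X\subset\PP^3$ be a smooth cubic surface, $Y$ a plane cubic section and $\cF$ an Ulrich line bundle on $Y$: then $E=\Sigma^X_1(F)$ is generated entirely in degree $1$ (the six linear syzygies of the three degree-zero generators of $F$ descend to $\kk[X]$ and generate the kernel), yet $\cE$ is not Ulrich --- the failure sits at the $\HH^n$ end of the window, equivalently in the number of generators, not in their degrees. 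The paper detects it by combining $\HH^0(X,\cE^\vee(1-c))\ne 0$ (from the splitting above) with $\HH^0(X,\omega_X(n-1))\ne 0$, which is precisely where non-minimal degree enters, and Serre duality to conclude $\HH^n(X,\cE(c-n))\ne 0$; your plan contains no substitute for this duality step. Relatedly, your Betti-table criterion is misstated: in the minimal degree case the last module of the $R$-resolution of $\kk[X]$ is generated exactly in degree $N-n+1$, and non-minimal degree corresponds to a generator in degree at least $N-n+2$. Finally, the ``if'' direction, which you propose to re-derive by the same bookkeeping, is genuinely nontrivial and is quoted from \cite[Proposition 3.6]{faenzi-pons:arxiv} in the paper; your sketch of it would need the full degree analysis you defer, so as written the proposal establishes neither implication.
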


\begin{proof}
  If $\cF$ is Ulrich on $Y$ and $d=N-n+1$ then $\cE$ is Ulrich on $X$,
  according to \cite[Proposition 3.6]{faenzi-pons:arxiv}. What we have
  to prove is that $\cE$ is not Ulrich on $X$ if either $\cF$ is not
  Ulrich on $Y$, or $d\ge N-n+2$.
  
  Put $R_X=\kk[X]$ and $R_Y=\kk[Y]$. We consider $F=\HH^0_*(\cF)$ as a
  graded $R_X$-module.
  Recall that $F$ is a graded Cohen-Macaulay module on $R_X$, which is
  non-maximal as its dimension is $\dim(R_Y) = n-c+1 < n+1 = \dim(R_X)$.
  Without loss of generality, we may assume that $\cF$ is initialized.
  So, the
  $R_X$-module $F$ is generated by finitely many elements of positive degree.
  Therefore, the free modules $(F_i \mid i \ge 0)$ appearing in the
  minimal graded free resolution $F_{\bigcdot}$ of $F$
  over $R_X$ take the form:
  \[
    F_i = \bigoplus_{j \in \{i,\ldots,r_i\}} R_X(-j)^{\oplus a_{i,j}},
  \]
  for some sequence of integers $(r_i \mid i \in \NN)$ with $r_i \ge
  i$ for all $i \in \NN$, and some
  uniquely determined integers $(a_{i,j} \mid i \in \NN, j 
  \in \{i,\ldots,r_i\})$, called the $R_X$-Betti numbers of $F$.
  \smallskip
  
  Write the Koszul resolution $K_{\bigcdot}$ of $R_Y$ over $R_X$ as:
  \[
    0 \leftarrow R_Y \leftarrow K_0 \leftarrow \cdots \leftarrow K_c
    \leftarrow 0,
    \qquad \mbox{with
      $K_i = R_X(-i)^{\oplus {c \choose i}}$ for all $i \in \{0,\ldots,c\}$}.
  \]

  Since $\cF$ is an initialized ACM sheaf over $Y$, there exists an injective map $\varphi : R_Y \to F$. This
  map lifts to a map of graded complexes
  $\varphi_{\bigcdot} : K_{\bigcdot} \to F_{\bigcdot}$ which we write,
  for $i \in \{0,\ldots,c\}$ as:
  \begin{equation}
    \label{phi_i}
    \varphi_i : R_X(-i)^{\oplus {c \choose i}}
    \to \bigoplus_{j \in \{i,\ldots,r_i\}} R_X(-j)^{\oplus a_{i,j}}.
  \end{equation}
  In particular, the polynomial maps
  $\varphi_0,\ldots,\varphi_c$ are actually constant.

  Next, we observe that $\varphi_i$ is injective for all
  $i \in \{0,\ldots,c\}$. Indeed, $\varphi_0$ is injective since
  $\varphi \ne 0$ and $K_0=R_X$.
  For $i \in \{1,\ldots,c\}$,
  by induction on $i$ we may assume that $\varphi_{i-1}$ is injective
  so that the induced map $\Sigma_i^X(R_Y) \to \Sigma_i^X(F)$ is also
  injective. Thus $\ker(\varphi_i)$ is contained in
  $\Sigma_{i+1}^X(R_Y)$. But looking at \eqref{phi_i} we see that
  $\ker(\varphi_i)$ is generated by elements of degree $i$, while all
  elements of $\Sigma_{i+1}^X(R_Y)$ have degree at least
  $i+1$. Therefore $\ker(\varphi_i)=0$.

  In view of the previous paragraph, for each $i \in \{0,\ldots,c\}$
  we have a splitting $F_i \simeq K_i \oplus G_i$ for some graded
  $R_X$-module $G_i$ such that for each $i \in \{1,\ldots,c\}$ the differential
  $d_i : F_i \to F_{i-1}$ is upper triangular according to the
  block-matrix form:
  \begin{equation}
    \label{block}
    d_i : K_i \oplus G_i \to K_{i-1} \oplus G_{i-1},
  \end{equation}
  i.e. $K_i \to K_{i-1} \oplus G_{i-1}$ factors
  through the Koszul differential $K_i \to K_{i-1}$.

  Thus the map $\varphi_{c-1}$ induces an
  injection:
  \[
    R_X(-c)=\Sigma_c^X(R_Y) \hookrightarrow \Sigma_c^X(F)=E.
  \]
  This says that $\HH^0(X,\cE(c)) \ne 0$.
  \smallskip
  
  Next, use \cite[Sequence (3.2)]{faenzi-pons:arxiv} to get a long exact sequence:
  \[
    0 \leftarrow \Hom_{R_Y}(F,R_Y(1)) \leftarrow
    E^\vee(1-c) \leftarrow F^\vee_{c-1}(1-c) \leftarrow \cdots
    \leftarrow  F^\vee_0(1-c)  \leftarrow 0.
  \]
  Set $Q_0=F_0$ and, for $i \in \{1,\ldots,c-1\}$, define:
  \[
    Q_i= \coker(F^\vee_{i-1}(1-c) \to F^\vee_i(1-c)),
  \]
  where the maps are extracted from the above complex. We get an
  injection $Q_{c-1} \to E^\vee(1-c)$. Note that the above maps have a
  block-matrix form which is the transpose of \eqref{block} and that
  the cokernel of the transpose of the Koszul differential
  $K^\vee_{c-2}(-c) \to K^\vee_{c-1}(-c)$ is the homogeneous ideal $I_{Y/X}$
  of $Y$ in $X$. Therefore, the map
  $F^\vee_{c-2}(1-c) \to F^\vee_{c-1}(1-c)$ commutes with restricting
  the source to $K^\vee_{c-2}(1-c)$ and the target to
  $K^\vee_{c-1}(1-c)$. This gives an induced surjection $Q_{c-1} \to
  I_{Y/X}(1)$. Then, we may extract a non-trivial map:
  \[
    R_X^{\oplus c}  = K^\vee_{c-1}(1-c)\to Q_{c-1},
  \]
  which composes to a non-trivial map $R_X^{\oplus c} \to
  E^\vee(1-c)$. So $\HH^0(X,\cE^\vee(1-c)) \ne 0$.

  \medskip
  Having set up all this, we can prove that, if $d \ge N-n+2$, no integer $t$ turns $\cE$ into
  an initialized Ulrich sheaf. Indeed, let $t$ be such an integer, so
  that $\HH^*(X,\cE(t-j))=0$ for $1 \le j \le n$. We proved
  $\HH^0(X,\cE(c)) \ne 0$ and hence $t \le c$.

  On the other hand, by
  \cite[Lemma 3.1]{faenzi-pons:arxiv}
  we have, since $X$ is not of minimal degree,
  that $\HH^0(X,\omega_X(n-1)) \ne 0$.
  So there is an injective map
  $\cO_X \to \omega_X(n-1)$.
  Tensoring $\cE^\vee(1-c)$ with this map and using
   $\HH^0(X,\cE^\vee(1-c)) \ne 0$, we get
  $\HH^0(X,\cE^\vee \otimes \omega_X(n-c)) \ne 0$.
  By Serre duality we have thus $\HH^n(X,\cE(c-n)) \ne 0$, which
  implies $t \ge c+1$. Thus no integer $t$ turns $\cE$ into an
  initialized Ulrich sheaf.

  \medskip
  
  Finally we prove that, if $\cF$ is not an initialized Ulrich sheaf
  on $Y$, then again no integer $t$ turns $\cE(t)$ into an initialized
  Ulrich sheaf on $X$. By the previous argument we have $t \le c$ and
  we only need to prove $\HH^n(X,\cE(c-n)) \ne 0$.
  Now, if $\cF$ is ACM
  but not Ulrich over the $(n-c)$-dimensional variety $Y$,
  since $\HH^0(Y,\cF(-1)) = 0 \ne \HH^0(Y,\cF)$,
  we must have $\HH^{n-c}(Y,\cF(c-n)) \ne 0$.
  Sheafifying the resolution $F_{\bigcdot} \to F$ and taking cohomology,
  we get injections, for $k \in \{1,\ldots,c\}$:
  \[
    \HH^{n-c}(Y,\cF(c-n)) \hookrightarrow \cdots \hookrightarrow  \HH^{n-c+k}(X,\Sigma^X_k(\cF)(c-n)) \hookrightarrow \cdots \hookrightarrow \HH^{n}(X,\cE(c-n)).
  \]

  Therefore $\HH^n(X,\cE(c-n)) \ne 0$. This concludes the proof.
\end{proof}

\section{Varieties of minimal degree}

\label{minimal degree}

For this section, let $X \subset \PP^N$ be a non-degenerate
closed subscheme of dimension $n \ge 2$ over an algebraically closed
field $\kk$.
Assume that $X$ is ACM of degree $d = N-n+1$, i.e., $X$ is of \textit{minimal degree}.
We want to prove Theorem \ref{main1} for $X$.

\medskip

Assume first that $X$ is not integral (not
reduced, or reduced but not irreducible). Then the argument of \cite[\S
6]{faenzi-pons:arxiv} applies to show that $X$ is non-Ulrich
CM-wild. This follows from the fact that $X$ supports ACM sheaves
supported on the different components of $X$, or on the reduced
structure of a component of $X$, and that we can twist the leftmost
term of this extension by $\cO_X(q)$ for $q \gg 0$ and therefore
obtain a representation embedding whose source is a wild algebra and
whose target consists of ACM sheaves on $X$ which are not Ulrich.

\medskip
If $X$ is integral but not smooth, then $X$ is a cone over a smooth
variety which is again ACM and of minimal degree, whose apex is a linear space
of dimension $m \ge 0$. Also in this case,
the argument of \cite[\S 6.1]{faenzi-pons:arxiv} gives rise to ACM
sheaves $\cE$ which are not Ulrich except for finitely many choices of $\cE$. Indeed, these sheaves arise as extensions of the form:
\[
  0 \to \cA(q) \to \cE \to \cB \to 0,
\]
where $\cA$, $\cB$ are sheaves obtained extending to $X$ the
presentation of initialized Ulrich sheaves on the base of the cone and
$q > 0$. For $q \gg 0$, the sheaf $\cE$ cannot be Ulrich, so we get that $X$ is non-Ulrich CM-wild.

\medskip

So from now on, in order to prove Theorem \ref{main1}, we can assume
that $X$ is smooth and irreducible, hence, according to the Bertini-del
Pezzo's classification (cf. \cite{eisenbud-harris:minimal-degree}), $X$ is a quadric hypersurface
or a rational normal scroll.

After setting up some notation, this case will be settled in the next
lemma.
Given $n \ge 2$ and a non-decreasing sequence $a=(a_1,\ldots,a_n)$ of integers $1 \le a_1 \le \cdots
\le a_n$ put $d=\sum_{i=1}^n a_i$ and $N=d+n-1$. We denote by
$S(a)=S(a_1,\ldots,a_n)$ the rational normal scroll defined as the
projectivization of $\oplus_{i=1}^n
\cO_{\PP^1}(a_i)$, embedded as a submanifold of degree $d$ in
$\PP^{d+n-1}$ by the tautological relatively ample line bundle.
We set $H$ for the hyperplane class of $S(a)$ and $F$ for the
class of a fibre of the projection $S(a) \to \PP^1$. Let
$\cL=\cO_X((d-1)F-H)$.

\bigskip

It is well-known that $X=S(a)$ is CM-finite if:
\[
  a \in \{(1,1), (1,2)\}.
\]
By \cite{faenzi-malaspina:minimal}, $X$ is CM-tame if:
\[
  a \in \{(1,3), (2,2)\}.
\]

We know by \cite{miro_roig:scrolls} that the rational normal scroll
$X=S(a)$ is Ulrich-wild except for the cases above:
\[
  a \in \{(1,1), (1,2), (1,3), (2,2)\}.
\]

If we seek unbounded families of non-Ulrich bundles, we should be a
bit more careful and exclude the exceptional cases appearing in the
statement of Theorem \ref{main1}, which is to say:
\[
  a \in \{(1,3), (2,2), (1,1,1)\}.
\]

So we actually
assume from now on:
\[
  n\ge 4, \qquad \mbox{or $n = 3$, $d \ge 4$,}
  \qquad \mbox{or $n = 2$, $d \ge 5$.}
\]

We start by noting that the setup of
\cite[Theorem B]{faenzi-malaspina:minimal} applies in any
dimension to give rigid Ulrich
bundles on $X$ associated with Fibonacci-like sequences.
Indeed, put $\ell = (n-1)d-n$, so that $h^i(\cO_X(H-d F)) = 0$
for all $i \ne 1$ and hence by Riemann-Roch:
\[
\ell = -\chi(\cL^\vee(-F)) = h^1(\cO_X(H-d F)) \ge 2,
\]
in our range for $(d,n)$.
Define recursively the Fibonacci-like numbers $a_{\ell,k} \in \NN$ by:
\[
a_{\ell,0}=0, \qquad a_{\ell,1}=1, \qquad a_{\ell,k+2}=\ell
a_{\ell,k+1}-a_{\ell,k}, \qquad \forall k \in \NN.
\]
Since $\ell \ge 2$, the sequence $(a_{\ell,k})$ is strictly increasing
along $k$.
\medskip

Recall the notion of exceptional sheaf $\cE$ on $X$, namely $\cE$ is a
simple coherent sheaf such that $\Ext^i_X(\cE,\cE)=0$ for $i > 0$.
Recall also that two exceptional sheaves $(\cE,\cF)$ form an
exceptional pair if $\Ext^i_X(\cF,\cE)=0$ for all $i$.
The pair  $(\cL,\cO_X(-F))$ is exceptional. We mentioned that
 $h^1(\cL^\vee(-F))=\ell$ and $h^i(\cL^\vee(-F))=0$ for $i \ne 1$,

Then, making use of mutations like in \cite[\S 2]{faenzi-malaspina:minimal}, we get that for each $k \ge 0$ there is a
unique exceptional sheaf $\cU_k$ which fits into:
\begin{equation}
  \label{Uk}
0 \to \cO_X(-F)^{\oplus a_{\ell,k}} \to \cU_k \to \cL^{\oplus a_{\ell,k+1}} \to 0.
\end{equation}

Theorem \ref{main1} will be proved for $X$ if check the following result.
\begin{lemma}
  The sheaves $\cB=\cU_k$ and $\cA = \cO_X$ satisfy the assumptions of
  Theorem \ref{generale} as soon as we choose:
  \begin{itemize}
  \item $k=0$ for $n \ge 4$;
  \item $k=1$ for $n=3$ and $d\ge 4$;
  \item $k=3$ for $n=2$ and $d \ge 5$.
  \end{itemize}
\end{lemma}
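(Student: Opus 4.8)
The plan is to check, one by one, all the hypotheses of Theorem~\ref{generale} for $\cA = \cO_X$ and $\cB = \cU_k$, together with the extra requirement of part~\ref{n>1} of that theorem, so that it applies and yields that $X$ is algebraically non-Ulrich CM-wild (which is what Theorem~\ref{main1} demands here, since $n \ge 2$ throughout this section). Several hypotheses are immediate: $\cO_X$ is simple, semistable (it is a line bundle) and ACM (because $X$ is ACM), while each $\cU_k$ is, by the mutation construction recalled above following \cite{faenzi-malaspina:minimal}, an exceptional sheaf, hence simple. Moreover $\cO_X(-F)$ and $\cL=\cO_X((d-1)F-H)$ are Ulrich bundles initialised by $1$ (immediate by pushing forward to $\PP^1$), so \eqref{Uk} exhibits $\cU_k$ as an extension of such, whence $\cU_k$ is itself Ulrich initialised by $1$; being Ulrich, it is semistable and ACM. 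Finally $\cO_X$ is \emph{not} Ulrich, since $h^0(\cO_X(-1))=0$ while $h^0(\cO_X)=1<d$; a fortiori $\cA$ and $\cB$ are not Ulrich initialised by a common integer, so part~\ref{n>1} will indeed be available. This leaves the two numerical conditions $\rp(\cB)\prec\rp(\cA)$ and $\dim_{\kk}\Ext^1_X(\cB,\cA)\ge 3$.

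For the reduced Hilbert polynomials, since $\cU_k$ is Ulrich of rank $r_k=a_{\ell,k}+a_{\ell,k+1}$ initialised by $1$, one has $\rp(\cU_k,t)=d\binom{t+n-1}{n}$; and since $X$ has minimal degree its $h$-vector is $(1,d-1)$, so $\rp(\cO_X,t)=\chi(\cO_X(t))=\binom{t+n}{n}+(d-1)\binom{t+n-1}{n}$. By Pascal's rule the difference $\rp(\cO_X,t)-\rp(\cU_k,t)$ equals $\binom{t+n-1}{n-1}$, a polynomial of degree $n-1\ge 1$ with positive leading coefficient, whence $\rp(\cU_k)\prec\rp(\cO_X)$.

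The heart of the argument — and the only step where the precise value of $k$ matters — is the estimate $\dim_{\kk}\Ext^1_X(\cU_k,\cO_X)\ge 3$. As $\cU_k$ is locally free, $\Ext^i_X(\cU_k,\cO_X)\cong\HH^i(X,\cU_k^\vee)$, and dualising \eqref{Uk} gives
\[
0 \to \cO_X(H-(d-1)F)^{\oplus a_{\ell,k+1}} \to \cU_k^\vee \to \cO_X(F)^{\oplus a_{\ell,k}} \to 0.
\]
Both $\cO_X(H-(d-1)F)=\cL^\vee$ and $\cO_X(F)$ have no higher direct images under $\pi\colon X\to\PP^1$, so their cohomology is that of $\pi_*\cL^\vee=\bigoplus_{i=1}^n\cO_{\PP^1}(a_i+1-d)$ and $\pi_*\cO_X(F)=\cO_{\PP^1}(1)$ on $\PP^1$; in particular $\HH^1(X,\cO_X(F))=0$. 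The long exact cohomology sequence then presents $\Ext^1_X(\cU_k,\cO_X)$ as the cokernel of a linear map $\HH^0(\cO_X(F))^{\oplus a_{\ell,k}}\to\HH^1(\cL^\vee)^{\oplus a_{\ell,k+1}}$, so that
\[
\dim_{\kk}\Ext^1_X(\cU_k,\cO_X) \ge a_{\ell,k+1}\,h^1(\cL^\vee) - 2\,a_{\ell,k}, \qquad h^1(\cL^\vee) = \sum_{i=1}^n \max(0, d-2-a_i) \ge (n-1)d-2n = \ell-n.
\]
It then remains to substitute the prescribed $k$: for $k=0$ (so $\cU_0=\cL$) the bound is $h^1(\cL^\vee)\ge(n-1)d-2n\ge n(n-3)\ge 4$, using $d\ge n$ and $n\ge 4$; for $k=1$ it is $a_{\ell,2}(\ell-3)-2a_{\ell,1}=\ell(\ell-3)-2\ge 8$, using $\ell=2d-3\ge 5$ when $n=3$, $d\ge 4$; and for $k=3$, since $a_{\ell,3}=\ell^2-1$ and $a_{\ell,4}=\ell^3-2\ell$, it is $(\ell^3-2\ell)(\ell-2)-2(\ell^2-1)$, which equals $5$ at $\ell=3$ and increases with $\ell$, using $\ell=d-2\ge 3$ when $n=2$, $d\ge 5$. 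In all cases $\dim_{\kk}\Ext^1_X(\cU_k,\cO_X)\ge 3$, and Theorem~\ref{generale} applies.

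The delicate point is exactly this last bookkeeping: one must climb the Fibonacci-like ladder far enough that the positive contribution $a_{\ell,k+1}\,h^1(\cL^\vee)$ outweighs the defect $2\,a_{\ell,k}$ coming from the sections of $\cO_X(F)$, and for surface scrolls $S(a_1,a_2)$ with $a_1\ge 2$ the number $h^1(\cL^\vee)$ can be as small as $d-4$, which is what forces the jump all the way to $k=3$ when $n=2$. Verifying that the single prescribed value of $k$ works uniformly over the whole admissible range of $a=(a_1,\dots,a_n)$ is the only step requiring care; everything else is formal.
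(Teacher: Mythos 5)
Your proposal is correct and follows essentially the same route as the paper: after the routine checks (simplicity, semistability, non-Ulrichness of $\cO_X$, and $\rp(\cU_k)\prec\rp(\cO_X)$), the key bound you obtain, $\dim_\kk\Ext^1_X(\cU_k,\cO_X)\ge a_{\ell,k+1}((n-1)d-2n)-2a_{\ell,k}$, is exactly the paper's estimate, which it derives from $h^1(\cU_k^\vee)\ge-\chi(\cU_k^\vee)$ and Riemann--Roch rather than from your (equivalent) long exact sequence of the dualized extension \eqref{Uk} and pushforward to $\PP^1$. The case analysis for $k=0,1,3$ then coincides with the paper's, so no comments are needed.
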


\begin{proof}
Working as in \cite[\S 2]{faenzi-malaspina:minimal} we check that $\cU_k$ is an exceptional Ulrich bundle which is actually
initialized by $t=1$. As a consequence, $\cU_k$ is (strictly)
semistable simple sheaf with:
\[
\rp(\cU_k)=\frac {td}{n!}\prod_{i=1}^{n-1} (t+i) \prec \frac
{td+n}{n!}\prod_{i=1}^{n-1} (t+i) =  \rp(\cO_X).
\]

The sheaf $\cO_X$ is not Ulrich and is obviously stable. So,
in order to verify the assumptions of Theorem \ref{generale},
it only remains to check the condition on the dimension of the extension space.
We note that $h^i(\cL^\vee)=0$ for $i \ge 2$ and that, by Riemann-Roch:
\[
\chi(\cL^\vee)=2n+(1-n)d.
\]
Looking at \eqref{Uk}, we deduce $h^i(\cU_k^\vee)=0$ for $i \ge 2$ so:
\begin{align}
\label{dimext} \dim_{\kk} \Ext^1_X(\cU_k,\cO_X) & = h^1(\cU_k^\vee) \ge-\chi(\cU_k^\vee)= \\
\nonumber & = -a_{\ell,k+1} \chi(\cL^\vee) - 2a_{\ell,k} = \\
\nonumber & = a_{\ell,k+1}((n-1)d-2n)  - 2a_{\ell,k}.
\end{align}

Assume now $n \ge 4$. In particular, we have $d\ge 4$. Recall that for the case $n \ge 4$ we have chosen $k=0$.
Hence we consider $\cU_0=\cL$ and note that $h^1(\cL^\vee)=-\chi(\cL^\vee) \ge 4$
because $(n-1) d - 2 n \ge 3 d - 8 \ge  4$. Thus the
lemma is proved for $n \ge 4$.

\smallskip

Next, assume $n=3$, so that our choice is $k=1$. Then, formula \eqref{dimext} gives $h^1(\cU_1^\vee) \ge 4
d^{2}-18 d+16$, which is at least $8$ as soon as $d \ge 4$.

\smallskip

Finally, let $n=2$, in which case we have chosen $k=3$. Then, evaluating 
 \eqref{dimext} we get $h^1(\cU_3^\vee) \ge d^{4}-10\,d^{3}+32\,d^{2}-36\,d+10$.
This is at least $5$ for $d \ge 5$.

\end{proof}

The proof of Theorem \ref{main1} is now complete.

\section{The Segre product of a line and a plane}
\label{section:segre}

Let us now turn to the analysis of the Segre product
 $X=\PP^1 \times \PP^2$, which we consider as a smooth submanifold of
$\PP^5$. In other words, $X$ is the rational normal scroll
$X=S(1,1,1)$ of degree $d=3$ embedded by the tautological relatively
ample divisor $H$, hence $X$ has minimal degree. A smooth hyperplane section
of $X$ is the CM-finite cubic scroll $S(1,2)$.

Our goal here is to classify all ACM indecomposable bundles on $X$. Of
course, this is not quite possible since Ulrich bundles form a wild
class in terms of representation theory, so we focus on non-Ulrich
bundles and we classify all those.

\subsection{A first classification result via homological non-vanishing}

Let us first give the basic ACM bundles that will be the output of the
classification. Put $\pi$ for the projection $X \to \PP^1$ and
$\Omega_\pi$ for the relative cotangent bundle. Here $X$ is a product
so $\Omega_\pi$ is the pull-back of the cotangent bundle of $\PP^2$ via
the projection $\sigma : X \to \PP^2$. Set $L=H-F$, so $\cO_X(L)=\sigma^*(\cO_{\PP^2}(1))$.
Recall:
\[\omega_X \simeq \cO_X(-2F-3L).\]

We easily see that $\cO_X(L)$ is ACM and $\Omega_\pi(H+L)$ is Ulrich.
We start with a lemma, inspired on \cite{ballico-malaspina:splitting}, that classifies these sheaves as bundles with a specific
non-vanishing.

\begin{lemma} \label{Omega}
  Let $\cE$ be a locally free sheaf on $X$.
  Then  $\cE \simeq \Omega_\pi(L)$ if and only
  if $\cE$ is indecomposable and:
    \begin{equation}
      \label{nonvan}
    \HH^1(\cE)=\HH^1(\cE(-1))=\HH^2(\cE(-2))=0, \qquad \HH^1(\cE(-L)) \ne 0.
    \end{equation}
\end{lemma}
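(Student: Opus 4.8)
The plan has two parts. The forward implication is a direct computation. Writing $\pi=p_1$ and $\sigma=p_2$ for the two projections, one has $\cO_X(aF+bL)=p_1^{*}\cO_{\PP^1}(a)\otimes\sigma^{*}\cO_{\PP^2}(b)$ and $\Omega_\pi(L)=\sigma^{*}\bigl(\Omega_{\PP^2}(1)\bigr)$. Since $\sigma_{*}\cO_X=\cO_{\PP^2}$ and $\mathrm{R}^{1}\sigma_{*}\cO_X=0$, the functor $\sigma^{*}$ is fully faithful on locally free sheaves, so $\Omega_\pi(L)$ is indecomposable because $\Omega_{\PP^2}(1)$ is. For the cohomology, push forward by $\sigma$: the projection formula gives $\mathrm{R}\sigma_{*}\bigl(\Omega_\pi(L)\otimes\cO_X(aF+bL)\bigr)\simeq\Omega_{\PP^2}(1+b)\otimes\HH^{\bullet}\bigl(\PP^1,\cO_{\PP^1}(a)\bigr)$, which vanishes for $a=-1$ and equals $\Omega_{\PP^2}(1+b)$, resp.\ $\Omega_{\PP^2}(1+b)[-1]$, for $a=0$, resp.\ $a=-2$. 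Feeding in the twists $\cO_X$, $\cO_X(-1)=\cO_X(-F-L)$, $\cO_X(-2)=\cO_X(-2F-2L)$ and $\cO_X(-L)$, all four (non-)vanishings in \eqref{nonvan} follow at once from $\HH^{\bullet}(\PP^1,\cO_{\PP^1}(-1))=0$ and from Bott's formula, which gives $\HH^{1}(\PP^2,\Omega_{\PP^2}(k))$ equal to $0$, $\kk$ and $0$ for $k=1,0,-1$.

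For the converse, let $\cE$ be indecomposable and locally free satisfying \eqref{nonvan}. The strategy is to show first that $\cE$ is a pull--back along $\sigma$, namely $\cE\simeq\sigma^{*}\cG$ for a locally free sheaf $\cG$ on $\PP^2$, and then to identify $\cG$ with $\Omega_{\PP^2}(1)$.

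For the first step I would prove that $\cE$ restricts to a trivial bundle on every fibre $C\cong\PP^1$ of $\sigma$; granting this, $\sigma_{*}\cE$ is locally free and the adjunction morphism $\sigma^{*}\sigma_{*}\cE\to\cE$ is an isomorphism, being an isomorphism fibre by fibre. To get the fibrewise triviality I would run on $X$ the Beilinson--type spectral sequence obtained from the K\"unneth product of the resolutions of the diagonals of $\PP^1$ and of $\PP^2$, whose building blocks are the line bundles $\cO_X(aF+bL)$ with $a\in\{0,-1\}$, $b\in\{0,-1,-2\}$, together with $\Omega_\pi(L)$ itself, following the pattern of \cite{ballico-malaspina:splitting}. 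The vanishings $\HH^{1}(\cE)=\HH^{1}(\cE(-1))=\HH^{2}(\cE(-2))=0$ kill most of its terms, while the conditions carrying information in the $F$--direction force the relative cohomology of $\cE$ along $\sigma$, in particular $\mathrm{R}^{1}\sigma_{*}\cE$, to vanish, which is exactly the obstruction to $\cE|_{C}$ being trivial for all $C$. I expect this descent step to be the main obstacle: one has to exclude, using the indecomposability of $\cE$, that the associated graded of the spectral sequence contains stray summands, for instance copies of $\cO_X(-F)$ or pull--backs of higher rank.

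For the second step, once $\cE\simeq\sigma^{*}\cG$, the K\"unneth formula turns the hypotheses into statements on $\PP^2$: the conditions involving $\cO_X(-F)$ become automatic, while $\HH^{1}(\cE)=\HH^{2}(\cE(-2))=0$ and $\HH^{1}(\cE(-L))\neq0$ become $\HH^{1}(\cG)=\HH^{1}(\cG(-2))=0$ and $\HH^{1}(\cG(-1))\neq0$; moreover $\cG\simeq\sigma_{*}\cE$ is indecomposable since $\sigma^{*}$ is fully faithful. It remains to check that $\Omega_{\PP^2}(1)$ is the unique indecomposable locally free sheaf on $\PP^2$ with these three cohomological properties. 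Running Beilinson's spectral sequence on $\PP^2$, the vanishings at the twists $0$ and $-2$ force the middle piece of the Beilinson filtration of $\cG$ to be $\HH^{1}(\cG(-1))\otimes\Omega_{\PP^2}(1)$ with no incoming or outgoing differential, so $\cG$ carries $\Omega_{\PP^2}(1)^{\oplus h}$ with $h=h^{1}(\cG(-1))\geq1$ as a subquotient. Using Serre duality to control $\HH^{0}(\cG)$ and $\HH^{2}(\cG(-2))$, together with $\Ext^{1}_{\PP^2}\bigl(\Omega_{\PP^2}(1),\cO_{\PP^2}(-1)\bigr)=\Ext^{1}_{\PP^2}\bigl(\cO_{\PP^2},\Omega_{\PP^2}(1)\bigr)=0$, one then sees that the outer filtration steps are trivial, hence $\cG\simeq\Omega_{\PP^2}(1)^{\oplus h}$; indecomposability forces $h=1$, so $\cG\simeq\Omega_{\PP^2}(1)$ and $\cE\simeq\Omega_\pi(L)$.
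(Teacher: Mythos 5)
Your forward implication is fine, but the converse has a genuine gap at exactly the point you flag as "the main obstacle": the descent step is not proved, and the mechanism you propose for it does not work. First, the vanishing of $\mathrm{R}^1\sigma_*\cE$ is \emph{not} the obstruction to fibrewise triviality: a fibre restriction $\cE|_C \simeq \cO_{\PP^1}(1)\oplus\cO_{\PP^1}(-1)$ has no $\HH^1$; what you would need is the vanishing of both $\sigma_*(\cE(-F))$ and $\mathrm{R}^1\sigma_*(\cE(-F))$ together with base change. Second, and more seriously, the four conditions in \eqref{nonvan} cannot "kill most of the terms" of a Beilinson-type spectral sequence on $X$: the $E_1$-page involves groups such as $\HH^\bullet(\cE(-F))$, $\HH^\bullet(\cE(-2L))$, $\HH^\bullet(\cE(-F-2L))$, $\HH^\bullet(\cE\otimes\Omega_\pi(-F))$, and various $\HH^0$, $\HH^2$, $\HH^3$ of twists, none of which are controlled by \eqref{nonvan}. (When the paper does run this spectral sequence, in the proof of the classification theorem, the sheaf is assumed ACM, which provides the vanishing of $\HH^1$ and $\HH^2$ of \emph{all} twists; here $\cE$ is an arbitrary locally free sheaf.) Third, your plan to "exclude stray summands using indecomposability" conflates filtration subquotients with direct summands: the graded pieces of the Beilinson filtration are not summands of $\cE$, so indecomposability by itself excludes nothing — one must split the filtration via Ext-vanishings, and those Ext groups are again not given by the hypotheses. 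The same issue resurfaces in your second step on $\PP^2$: the hypotheses do not force $\HH^0(\cG)=0$ or $\HH^2(\cG(-2))=0$, so the outer filtration pieces are genuinely there a priori, and the splitting you invoke is obstructed, e.g. $\Ext^1_{\PP^2}(\Omega_{\PP^2}(1),\cO_{\PP^2}(-2))\simeq \HH^1(\PP^2,\Omega_{\PP^2})\ne 0$, so more than the two Ext-vanishings you quote is needed.

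For comparison, the paper's argument avoids any descent or spectral sequence. Using the vertical Euler sequence and $\HH^1(\cE)=0$ it produces a surjection $\Hom_X(\Omega_\pi(L),\cE)\twoheadrightarrow \HH^1(\cE(-L))$, hence a map $f:\Omega_\pi(L)\to\cE$ attached to a chosen $e\ne 0$; using the dual vertical and the horizontal Euler sequences together with Serre duality (this is where $\HH^2(\cE(-2))=\HH^1(\cE(-1))=0$ enter) it produces a surjection $\Hom_X(\cE,\Omega_\pi(L))\twoheadrightarrow\Ext^2_X(\cE,\cO_X(-2))$, hence a map $g:\cE\to\Omega_\pi(L)$ attached to an $h$ that pairs nontrivially with $e$ into $\HH^3(\omega_X)$. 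Compatibility of Yoneda pairings shows $g\circ f\ne 0$, and since $\Omega_\pi(L)$ is simple this composition is a nonzero multiple of the identity, so $\Omega_\pi(L)$ splits off $\cE$ and indecomposability concludes. If you want to salvage your outline, you would need to supply the descent step by an argument of comparable strength; as written, it is the missing core of the proof.
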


\begin{proof}
  One implication is clear, so we assume that $\cE$ is an
  indecomposable locally free
  sheaf satisfying \eqref{nonvan} and we prove that $\cE \simeq \Omega_\pi(L)$.
  Recall the standard isomorphism $\Ext^1_X(\cO_X(L),\cE) \simeq
  \HH^1(\cE(-L))$. Then, write the vertical Euler sequence:
  \begin{equation}
    \label{euler}
  0 \to \Omega_\pi(L) \to \cO_X^{\oplus 3} \to \cO_X(L) \to 0,
  \end{equation}
  and apply $\Hom_X(-,\cE)$ to it. Since
  $\Ext^1_X(\cO_X,\cE)=\HH^1(\cE)=0$, we get a surjection:
  \[
  \Hom_X(\Omega_\pi(L),\cE) \epi \Ext^1_X(\cO_X(L),\cE) \simeq \HH^1(\cE(-L)).
  \]
  Take $e \in \HH^1(\cE(-L)) \setminus \{0\}$ and consider a map $f :
  \Omega_\pi(L) \to \cE$ lying in the preimage of $e$ under the above
  surjection.
  \medskip

  Further, we consider the dual vertical Euler sequence, written in
  the form:
  \begin{equation}
    \label{eulerdual}
  0 \to \cO_X(-2L)  \to \cO_X(-L)^{\oplus 3} \to \Omega_\pi(L) \to 0.
  \end{equation}
  Note that, by Serre duality, our assumption gives:
  \[
  \Ext^1_X(\cE,\cO_X(-L)) \simeq \HH^2(\cE(-2))^\vee=0.
  \]
  Next, we write the horizontal Euler sequence in the form:
  \begin{equation}
    \label{eulerh}
  0 \to \cO_X(-2L-2F)  \to \cO_X(-2L-F)^{\oplus 2} \to \cO_X(-2L) \to 0.
  \end{equation}

  Again our assumption gives, via Serre duality:
  \begin{align*}
  &\Ext^2_X(\cE,\cO_X(-2L-F)) \simeq \HH^1(\cE(-1))^\vee=0.
  \end{align*}
  We have thus a surjection as composition of surjections:
  \begin{equation}
    \label{surietta}
  \Hom_X(\cE,\Omega_\pi(L)) \epi \Ext^1_X(\cE,\cO_X(-2L)) \epi
  \Ext^2_X(\cE,\cO_X(-2)).
  \end{equation}

  Choose a generator $k_X$ of the vector space $\HH^3(\omega_X)$ and
  $h \in \Ext^2_X(\cE,\cO_X(-2))$ such that the Yoneda product
  \[
  \HH^1(\cE(-L)) \otimes \Ext^2_X(\cE,\cO_X(-2)) \to \HH^3(\cO_X(-2F-3L))
  \simeq \HH^3(\omega_X)
  \]
  sends $e \otimes h$ to $k$.
  Choose then $g :
  \cE \to \Omega_\pi(L)$ lying in the preimage of $h$ under the
  surjection \eqref{surietta}.
  \medskip

  It is well-known that $\Omega_{\PP^2}$ is a simple sheaf so the same
   holds for $\Omega_\pi(L)$. Therefore, as
  soon as the map $g \circ f$ is non-zero it must be a non-zero
  multiple of the identity.
  This implies immediately that $\Omega_\pi(L)$ is a direct summand of $\cE$, which
  forces $\cE \simeq \Omega_\pi(L)$ because $\cE$ is indecomposable.

  \medskip
  It remains to check that $g \circ f \ne 0$. To do this, we consider
  the following commutative diagram of Yoneda maps.
  \begin{equation}
    \label{diagrammozzo}
  \xymatrix@-2ex{
    \Hom_X(\Omega_\pi(L),\cE) \otimes \Hom_X(\cE, \Omega_\pi(L)) \ar[r] \ar[d]   & \ar[d] \Hom_X(\Omega_\pi(L), \Omega_\pi(L)) \\
    \Hom_X(\Omega_\pi(L),\cE) \otimes  \Ext^2_X(\cE,\cO_X(-2)) \ar[r] \ar[d]   & \Ext^2_X(\Omega_\pi(L),\cO_X(-2)) \ar[d]\\
    \Ext^1_X(\cO_X(L),\cE) \otimes  \Ext^2_X(\cE,\cO_X(-2)) \ar[r]    & \HH^3(\omega_X)
  }
  \end{equation}

  Our goal is to prove that the map appearing in the top row sends
  $f \otimes g$ to a non-zero element. The upper map in the left
  column sends $f \otimes g$ to $f \otimes h$, so it suffices to check
  that the map in the middle row sends $f \otimes h$ to a non-zero element.
  In turn, the lower map in the left
  column sends $f \otimes h$ to $e \otimes h$, so it is enough to show
  that the map in the bottom row sends $e \otimes h$ to a non-zero element.
  But this last map sends $e \otimes h$ to $k_X$, hence we are done.
\end{proof}

In a similar vein we show the following.

\begin{lemma} \label{uovo di colombo}
Let $\cE$ be an indecomposable locally free sheaf on $X$. Then:
\begin{enumerate}[label=\roman*)]
\item \label{isOL} there is an isomorphism $\cE \simeq \cO_X(-L)$ if and only if:
\begin{equation}
    \label{OL}
    \HH^0(\cE)=    \HH^1(\cE(-L))=    \HH^2(\cE(-F-2L))=0, \qquad
    \HH^0(\cE(L)) \ne 0.
  \end{equation}
\item \label{isO} there is an isomorphism $\cE \simeq \cO_X(-1)$ if and only if:
  \begin{equation}
    \label{O}
    \HH^0(\cE(L))=    \HH^1(\cE(-F))=    \HH^2(\cE(-1))=0, \qquad
    \HH^0(\cE(1)) \ne 0.
  \end{equation}
\end{enumerate}
\end{lemma}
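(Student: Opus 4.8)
The plan is to carry over the proof of Lemma \ref{Omega} (inspired by \cite{ballico-malaspina:splitting}) almost verbatim. The only structural simplification is that the two candidate sheaves $\cO_X(-L)$ and $\cO_X(-1)=\cO_X(-F-L)$ are line bundles, hence simple, so the non-vanishing hypothesis in each part will directly supply a nonzero morphism \emph{into} $\cE$, with no auxiliary Euler sequence required for that step. In both parts the ``only if'' direction is a routine Künneth computation on $\PP^1\times\PP^2$ (using $\omega_X\simeq\cO_X(-2F-3L)$); the content is the converse, which I describe for part \ref{isOL}.

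Assume $\cE$ is indecomposable, locally free and satisfies \eqref{OL}, and put $\cL_0=\cO_X(-L)$. The assumption $\HH^0(\cE(L))\ne 0$ yields a nonzero (hence injective) morphism $f\colon\cL_0\to\cE$. To construct a morphism $g\colon\cE\to\cL_0$ the other way round, I would chain connecting homomorphisms obtained from two complexes on $X$: first the pull-back of the Euler sequence of $\PP^1$, twisted by $\cO_X(-L)$, namely
\[
0\to\cO_X(-L-2F)\to\cO_X(-L-F)^{\oplus 2}\to\cO_X(-L)\to 0,
\]
then the pull-back of the Koszul complex $0\to\cO_{\PP^2}(-3)\to\cO_{\PP^2}(-2)^{\oplus 3}\to\cO_{\PP^2}(-1)^{\oplus 3}\to\cO_{\PP^2}\to 0$, twisted by $\cO_X(-L-2F)$. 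Applying $\Hom_X(\cE,-)$ and composing the connecting maps produces a homomorphism
\[
\Hom_X(\cE,\cL_0)\longrightarrow\Ext^3_X(\cE,\cO_X(-4L-2F)),
\]
whose surjectivity requires exactly the vanishing of $\Ext^1_X(\cE,\cO_X(-1))$, $\Ext^2_X(\cE,\cO_X(-2))$ and $\Ext^3_X(\cE,\omega_X)$; by Serre duality on $X$ these are the duals of $\HH^2(\cE(-F-2L))$, $\HH^1(\cE(-L))$ and $\HH^0(\cE)$, so \eqref{OL} supplies precisely what is needed. Moreover $\cO_X(-4L-2F)=\cL_0\otimes\omega_X$, so the target is $\Ext^3_X(\cE,\cL_0\otimes\omega_X)\simeq\HH^0(\cE(L))^\vee\ne 0$.

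Then I would choose $h$ in $\Ext^3_X(\cE,\cL_0\otimes\omega_X)$ pairing with $f$, under the perfect Serre pairing
\[
\Hom_X(\cL_0,\cE)\otimes\Ext^3_X(\cE,\cL_0\otimes\omega_X)\longrightarrow\Ext^3_X(\cL_0,\cL_0\otimes\omega_X)=\HH^3(\omega_X),
\]
to a generator $k_X$ of $\HH^3(\omega_X)$, and lift $h$ to $g\colon\cE\to\cL_0$ along the surjection above. As in the proof of Lemma \ref{Omega}, the resulting commutative square of Yoneda products then forces the image of $g\circ f$ in $\Ext^3_X(\cL_0,\cL_0\otimes\omega_X)=\HH^3(\omega_X)$ to equal $k_X\ne 0$, hence $g\circ f\ne 0$ in $\Hom_X(\cL_0,\cL_0)=\kk$; since $\cL_0$ is a line bundle, $g\circ f$ is an automorphism, $\cL_0$ is a direct summand of $\cE$, and indecomposability gives $\cE\simeq\cL_0$. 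Part \ref{isO} runs identically with $\cL_0=\cO_X(-1)$, now chaining the $\PP^2$-Koszul complex twisted by $\cO_X(-F-L)$ followed by the $\PP^1$-Euler sequence twisted by $\cO_X(-F-4L)$; the surjectivity of $\Hom_X(\cE,\cL_0)\to\Ext^3_X(\cE,\cO_X(-3F-4L))=\Ext^3_X(\cE,\cL_0\otimes\omega_X)$ requires $\Ext^1_X(\cE,\cO_X(-F-2L))=\Ext^2_X(\cE,\cO_X(-F-3L))=\Ext^3_X(\cE,\cO_X(-2F-4L))=0$, which are the Serre duals of $\HH^2(\cE(-1))=\HH^1(\cE(-F))=\HH^0(\cE(L))=0$ from \eqref{O}, while $\HH^0(\cE(1))\ne 0$ furnishes $f$ and makes the target nonzero.

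The routine parts are the Künneth verification for ``only if'' and the twist bookkeeping above. The single delicate point --- already present in Lemma \ref{Omega}, so I do not expect a new obstacle --- is to check that the two twisted complexes concatenate so that the $\Ext$ groups forced to vanish are precisely the Serre duals of the stated hypotheses, and that the Yoneda square commutes, so that non-degeneracy of the Serre pairing is propagated to $g\circ f$.
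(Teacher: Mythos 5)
Your proposal is correct and follows essentially the same route as the paper: a nonzero $f\colon\cO_X(-L)\to\cE$ (resp. $\cO_X(-1)\to\cE$) from the non-vanishing hypothesis, a map $g$ the other way obtained by lifting along a chain of connecting homomorphisms whose surjectivity is exactly the Serre duals of the three assumed vanishings, and the Yoneda/Serre pairing argument showing $g\circ f\neq 0$, whence the summand splits off and indecomposability concludes. The only difference is cosmetic: you package the $\PP^2$ part of the chain as the full Koszul complex twisted appropriately, whereas the paper splices it from the two (twisted) Euler sequences, and the resulting twists and required $\Ext$-vanishings coincide with those in the paper's proof.
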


\begin{proof}
  Both items have an obvious implication, what we have to prove is
  that $\cE$ is isomorphic to the desired sheaf after assuming the
  cohomological conditions.

  Let us prove \ref{isOL}. Choose a non-zero element $f$ of
  $\HH^0(\cE(L)) \simeq \Hom_X(\cO_X(-L),\cE)$.
  Next, we choose a generator $k_X$ of $\HH^3(\omega_X)$ and note that by
  Serre duality there exists $h \in
  \Ext^3_X(\cE,\cO_X(-2F-4L))$ such that the Yoneda pairing
  \[
  \Hom_X(\cO_X(-L),\cE) \otimes \Ext^3_X(\cE,\cO_X(-2F-4L)) \to
   \HH^3(\omega_X)
  \]
  sends $f \otimes h$ to $k$.

  Next, write again the exact sequences \eqref{euler},
  \eqref{eulerdual} and \eqref{eulerh}, twisted by lines bundles on
  $X$ so that they take the following form:
  \begin{align*}
    & 0 \to \cO_X(-2F-4L) \to \cO_X(-2F-3L)^{\oplus 3} \to \Omega_\pi(-2F-L) \to 0, \\
    & 0 \to \Omega_\pi(-2F-L) \to \cO_X(-2)^{\oplus 3} \to  \cO_X(-2F-L) \to 0, \\
    & 0 \to  \cO_X(-2F-L) \to \cO_X(-1)^{\oplus 2} \to \cO_X(-L) \to 0.
  \end{align*}
  We remark that the vanishing assumptions of \ref{isOL}
  and Serre duality imply:
  \begin{align*}
    & \Ext_X^3(\cE,\cO_X(-2F-3L)) \simeq \HH^0(\cE)^*=0, \\
    & \Ext_X^2(\cE,\cO_X(-2)) \simeq \HH^1(\cE(-L))^*=0, \\
    & \Ext_X^1(\cE,\cO_X(-1)) \simeq \HH^2(\cE(-F-2L))^*=0.
  \end{align*}
  Therefore, applying $\Hom_X(\cE,-)$ to the three sequences above we
  get a surjection:
  \begin{equation}
    \label{natasuriezione}
  \Hom_X(\cE,\cO_X(-L)) \epi \Ext^3_X(\cE,\cO_X( 2F-4L)).
  \end{equation}
  We choose now $g \in \Hom_X(\cE,\cO_X(-L))$ in the preimage of $h$.

  Therefore we have a commutative diagram of the form:
  \[
  \xymatrix@-3ex{
    \Hom_X(\cO_X(-L),\cE) \otimes \Hom_X(\cE,\cO_X(-L)) \ar[d] \ar[r]
    & \Hom_X(\cO_X(-L),\cO_X(-L)) \ar[d]\\
    \Hom_X(\cO_X(-L),\cE) \otimes \Ext^1_X(\cE,\cO_X(-2F-L)) \ar[d]
    \ar[r] & \Ext^1_X(\cO_X(-L),\cO_X(-2F-L)) \ar[d]\\
    \Hom_X(\cO_X(-L),\cE) \otimes \Ext^2_X(\cE,\Omega_\pi(-2F-L))
    \ar[d] \ar[r] & \Ext^2_X(\cO_X(-L),\Omega_\pi(-2F)) \ar[d]\\
    \Hom_X(\cO_X(-L),\cE) \otimes \Ext^3_X(\cE,\cO_X( 2F-4L)) \ar[r] & \Ext^3(\cO_X(-L),\cO_X(-2F-4L))
  }
  \]
  where the horizontal maps are given by the Yoneda pairing, the
  left vertical ones are given by the factorization of the map
  \eqref{natasuriezione} while the maps in the right vertical column
  are obtained by applying $\Hom_X(\cO_X(-L),-)$ to the three
  exact sequences above. Since the identity map of $\cO_X(-L)$ is sent
  to $k_X$ via the composition of vertical maps by construction, it follows that $g \circ f$ is sent
  to the identity of $\cO_X(-L)$ via the top horizontal map. This says
  that $\cO_X(-L)$ is a direct summand of $\cE$, and therefore proves
  $\cE \simeq \cO_X(-L)$ by the indecomposability of $\cE$.

  \medskip
  The proof of \ref{isO} is similar, so we only sketch the
  argument. The strategy this time is to apply $\Hom_X(\cE,-)$ to
  the exact sequences:
  \begin{align*}
    & 0 \to \cO_X(-3F-4L) \to \cO_X(-2F-4L)^{\oplus 2} \to \cO_X(-F-4L) \to 0, \\
    & 0 \to \cO_X(-F-4L) \to \cO_X(-F-3L)^{\oplus 3} \to  \Omega_\pi(-1) \to 0, \\
    & 0 \to  \Omega_\pi(-1) \to \cO_X(-F-2L)^{\oplus 3} \to \cO_X(-1) \to 0,
  \end{align*}
  and to use Serre duality which gives, via the assumption of
  \ref{isO}:
  \begin{align*}
    & \Ext_X^3(\cE,\cO_X(-2F-4L)) \simeq \HH^0(\cE(L))^*=0, \\
    & \Ext_X^2(\cE,\cO_X(-F-3L)) \simeq \HH^1(\cE(-F))^*=0, \\
    & \Ext_X^1(\cE,\cO_X(-F-2L)) \simeq \HH^2(\cE(-1))^*=0.
  \end{align*}
  The rest of the proof follows the same pattern as in \ref{isOL}.
\end{proof}

\subsection{Beilinson-type spectral sequence}

We use the derived category $\rD(X)$ of bounded complexes of coherent
sheaves over the smooth projective variety $X$, in order to write the
Beilinson-type spectral sequence associated with a coherent sheaf
$\cE$ on $X$ after fixing a convenient full exceptional sequence in $\rD(X)$.
Indeed, the point is that the terms of this spectral sequence take a
special form when $\cE$ is ACM, and this will be our basic tool to
classify such sheaves.

\subsubsection{Background on exceptional objects and mutations}

Let us first recall some terminology. An object $\cE$ of $\rD(X)$ is
called \textit{exceptional} if $\Ext_X^\bullet(\cE,\cE) = \kk$,
concentrated in degree zero.
An ordered set of exceptional objects $(\cE_0, \ldots, \cE_s)$ is called an \textit{exceptional collection} if $\Ext_X^\bullet(\cE_i,\cE_j) = 0$ for $i > j$.
An exceptional collection is \textit{full} when
$\Ext_X^\bullet(\cE_i,\cF) = 0$ for all $i$ implies $\cF =
0$. Equivalently, the collection is full when $\Ext_X^\bullet(\cF, \cE_i) = 0$
implies $\cF =
0$.

Exceptional collections can be \textit{mutated}, let us recall what
that means. Let $\cE$ be an exceptional object in $\rD(X)$.
Then there are endofunctors $\rL_{\cE}$ and $\rR_{\cE}$ of $\rD(X)$, called
respectively the \textit{left} and \textit{right mutation functors}
such that, for all
$\cF$ in $\rD(X)$ there are functorial distinguished triangles:
\begin{align*}
  &\rL_{\cE}(\cF) \to \Ext_X^\bullet(\cE,\cF) \otimes \cE \to \cF \to  \rL_{\cE}(\cF)[1], \\
  & \rR_{\cE}(\cF)[-1] \to \cF \to \Ext_X^\bullet(\cF,\cE)^\vee \otimes \cE \to \rR_{\cE}(\cF).	
\end{align*}

For all $i=0,\ldots,s$ we define the \textit{right} and \textit{left
  dual} objects:
\begin{align*}
  & \cE_i^{\rd} = \rL_{\cE_0} \rL_{\cE_1} \cdots \rL_{\cE_{s-i-1}} \cE_{s-i},\\
  & {}^\rd\cE_i = \rR_{\cE_s} \rR_{\cE_{s-1}} \cdots \rR_{\cE_{s-i+1}} \cE_{s-i}.
\end{align*}
It turns out that, if $\cE_\bullet = (\cE_0, \ldots, \cE_s)$ is a full exceptional
collection, then both $(\cE_0^{\rd},\ldots,\cE_s^{\rd})$ and
 ${}^\rd\cE_\bullet=({}^{\rd}\cE_0,\ldots,{}^{\rd}\cE_s)$ also are full exceptional
collections, called respectively the \emph{right} and \emph{left dual} collections
of $(\cE_0, \ldots, \cE_s)$. We refer to \cite[\S 2.6]{gorodentsev-kuleshov}.
The dual collections are characterized by the following property:
\begin{equation}\label{eq:dual characterization}
\Ext_X^\ell({}^\rd \cE_i, \cE_j) \simeq \Ext^\ell_X(\cE_i, \cE_j^\rd) \simeq
\left\{
\begin{array}{ll}
\kk, & \textrm{\quad if $i+j = s$ and $i = \ell$,} \\
0, & \textrm{\quad otherwise}.
\end{array}
\right.
\end{equation}

Given an object $\cF$ of $\rD(X)$ and a full exceptional
collection $(\cE_0, \ldots, \cE_s)$, there is a spectral sequence:
\[
\bigoplus_{r+t=q} \Ext^{r}_X({}^\rd\cE_{s-p}, \cF) \otimes \cH^t(\cE_{p}) = E_1^{p,q} \Rightarrow \cH^{p+q-s}(\cF),
\]
where $\cH^i$ denotes the $i$-th homology sheaf of $\cF$. This means
that, for all $(p,q)$ such that $p+q \ne s$ we have
$E_\infty^{p,q}=0$, while:
\[
\bigoplus_{p+q=s}E_\infty^{p,q} \simeq \gr(\cF),
\]
where $\gr(\cF)$
denotes the graded object with respect to a filtration of
$\cF$ of the form:
\begin{align*}
& \cF = \cF_0 \supset \cF_1 \supset \cdots \supset \cF_s \supset \cF_{s+1}=0, &&
                                                              \mbox{with:}
  && \cF_j/\cF_{j+1} \simeq E_\infty^{j,s-j}.
\end{align*}
The $r$-th differential of the
  spectral sequence reads
  $\delta_r^{p,q} : E_r^{p,q} \to
  E_r^{p+r,q-r+1}$.

  \subsubsection{An exceptional collection adapted to ACM sheaves}

  Let us choose a full exceptional collection over $X$ adapted to the
  classification of ACM sheaves.
  Recall that we denoted by $F$ the divisor class of the $\PP^2$-bundle map $\pi : X \to \PP^1$ so that $\rD(X)$ has the following semiorthogonal decomposition: 
  \begin{align*}
\rD(X)=&\langle\pi^*\rD(\PP^1)\otimes\cO_X(-2L),\pi^*\rD(\PP^1)\otimes\cO_X(-L),
        \pi^*\rD(\PP^1)\rangle= \\
       =&\langle\cO_X(-F-2L),\cO_X(-2L),\cO_X(-F-L),\cO_X(-L),\cO_X(-F),\cO_X\rangle.
  \end{align*}

Twisting by a line bundle $\cO_X(L)$ and performing a right mutation given by the Euler sequences on $\PP^1$, this is replaced by:
\[
\rD(X) = \langle\cO_X(-L),\cO_X(F-L),\cO_X(-F),\cO_X,\cO_X(L-F),\cO_X(L)\rangle.
\]

Since $\cO_X$ and $\cO_X(L-F)$ are mutually orthogonal, mutation gives:
\[
\rD(X) = \langle(\cO_X(-L),\cO_X(F-L),\cO_X(-F),\cO_X(L-F),\cO_X,\cO_X(L)\rangle.
\]

Finally, a right mutation given by the Euler sequences on $\PP^2$
gives the following full exceptional collection of vector bundles over $X$.
\[
\cE_\bullet=  \left(\cO_X(-L),\cO_X(F-L),\cO_X(-F),\cO_X(L-F),\Omega_\pi(L),\cO_X\right).
\]

Setting $\cT_\pi = \Omega_\pi^\vee \simeq \Omega_\pi(3L)$, we write the left dual of this collection as:
\begin{equation}
  \label{base}
^\rd\cE_\bullet = \left(\cO_X,\cO_X(L),\cO_X(1)[1],\cT_\pi(F)[1],\cO_X(F+2L)[2],\cO_X(2)[2]\right),
\end{equation}

Note that $\cE_1=\cO_X(F-L)$ is the Ulrich line bundle $\cL$ from \S \ref{minimal degree}.
By Künneth's formula one gets another special feature of
this collection, namely that:
\begin{equation}
  \label{nomaps}
  \begin{aligned}
    &\Hom_X(\cE_0,\cE_2) =  \Hom_X(\cE_0,\cE_3)= 0, \\
    &\Hom_X(\cE_1,\cE_j)=0, && \mbox{for $j \ne 1$}, \\
    &\Hom_X(\cE_2,\cE_4) = 0, \\
    &\Hom_X(\cE_3,\cE_4) = 0.
  \end{aligned}
\end{equation}

\subsection{Beilinson resolution of non-Ulrich sheaves}
\label{subsection:beilinson}

Our goal for this subsection is to prove the next result.

\begin{theorem} \label{classifica}
  Up to twist by $\cO_X(t)$, an indecomposable ACM bundle
  $\cF$ on $X$ is either isomorphic to $\cO_X(-1)$, or
  to $\cO_X(-L)$ or to $\Omega_\pi(L)$, or to
  an Ulrich bundle $\cU$ fitting into:
  \[
  0 \to \cO_X(-F)^{\oplus a} \to \cU \to \cO_X(F-L)^{\oplus  b} \to
0, \qquad \mbox{for some $(a,b) \in \NN^2$}.
  \]

\end{theorem}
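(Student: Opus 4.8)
The plan is to feed the indecomposable ACM bundle $\cF$ into the Beilinson-type spectral sequence associated with the left dual collection $^\rd\cE_\bullet$ from \eqref{base}, and exploit the ACM hypothesis to kill most of the $E_1$-terms. Recall that $E_1^{p,q} = \bigoplus_{r+t=q}\Ext^r_X({}^\rd\cE_{s-p},\cF)\otimes\cH^t(\cE_p)$, with $s=5$; since $\cF$ is a sheaf, $\cH^t(\cF)$ is concentrated in degree $0$, so $E_1^{p,q}$ is governed by $\Ext^r_X({}^\rd\cE_{5-p},\cF)$, which by \eqref{base} is (up to shift) an ordinary cohomology group of a twist of $\cF$ by one of $\cO_X$, $\cO_X(L)$, $\cO_X(1)$, $\cT_\pi(F)$, $\cO_X(F+2L)$, $\cO_X(2)$. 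First I would normalize $\cF$ by a twist so that $\HH^0(\cF)=0$ but $\HH^0(\cF(1))\neq 0$ — this is exactly the setup of Theorem \ref{B} and it pins down which ``column'' of the Beilinson table can be non-zero. The ACM condition $\HH^1_*(\cF)=0$ together with local freeness (so $\HH^2(\cF(k))\cong \HH^1(\cF^\vee\otimes\omega_X(-k))^\vee$ is also heavily constrained, being again an $\HH^1$ of a bundle after Serre duality — but more simply $\HH^1(\cF(k))=0$ for all $k$) collapses the middle row entirely, leaving only $\HH^0$ and $\HH^3$ (equivalently, by Serre duality, $\HH^0$ of a twist of $\cF^\vee\otimes\omega_X$, and $\cF^\vee\otimes\omega_X$ is again ACM, so the same normalization trick applies to it).

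Next I would carry out the bookkeeping: after the normalization, $\HH^0$ of the relevant twists of $\cF$ vanishes except possibly for $\cO_X(L)$, $\cO_X(1)$, and $\cT_\pi(F)$ (the ``positive'' twists), while $\HH^3$ of the relevant twists vanishes except for the dual ``negative'' ones; I expect exactly a handful of surviving $E_1$-entries, sitting in two adjacent anti-diagonals so that all higher differentials $\delta_r$ for $r\geq 2$ either vanish or are forced by degree reasons. The spectral sequence then degenerates at $E_2$ (or a single $\delta_1$ differential survives), and converges to $\cF$ (placed in the appropriate total degree). This produces a short two-term complex, or an extension, built out of the exceptional bundles $\cE_p$ appearing in those surviving spots — namely $\cO_X(-L)$, $\cO_X(F-L)$, $\cO_X(-F)$, $\cO_X(L-F)$, $\Omega_\pi(L)$, $\cO_X$. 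The extra orthogonality relations \eqref{nomaps} then restrict which pairs of these can actually be glued, and I would argue that the only possibilities for an \emph{indecomposable} $\cF$ are: (a) $\cF$ is one of the building blocks themselves, giving $\cO_X(-1)$ after untwisting $\cO_X$, or $\cO_X(-L)$, or $\Omega_\pi(L)$; or (b) $\cF$ is a non-trivial self-extension-free gluing of copies of $\cO_X(-F)$ and $\cO_X(F-L)$, i.e. sits in $0\to\cO_X(-F)^{\oplus a}\to\cF\to\cO_X(F-L)^{\oplus b}\to 0$. For the latter I would separately verify, using the vanishing of $\HH^*$ on the appropriate twists of $\cO_X(-F)$ and $\cO_X(F-L)$, that any such extension is automatically Ulrich (cf. the criterion $\HH^*(X,\cU(-j))=0$ for $1\le j\le 3$), which also rules out $\cO_X(-F)$ or $\cO_X(F-L)$ alone appearing as a ``non-Ulrich'' output with both $a,b>0$; the degenerate cases $a=0$ or $b=0$ collapse to line bundles already on the list (up to twist, $\cO_X$).

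The independent characterizations proved in Lemmas \ref{Omega} and \ref{uovo di colombo} are the tool that turns ``$\Omega_\pi(L)$ (resp.\ $\cO_X(-L)$, $\cO_X(-1)$) is a summand of the Beilinson complex'' into ``$\cF\simeq \Omega_\pi(L)$ (resp.\ $\cO_X(-L)$, $\cO_X(-1)$)'': once the spectral sequence tells us that the relevant exceptional bundle contributes, I can check that the four cohomological conditions of the appropriate lemma hold for $\cF$ — the three vanishings come from the ACM hypothesis plus the normalization, and the single non-vanishing ($\HH^1(\cF(-L))\ne 0$, or $\HH^0(\cF(L))\ne 0$, or $\HH^0(\cF(1))\ne 0$) is precisely what detects that the spectral sequence entry is non-zero — and conclude by indecomposability. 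I expect the main obstacle to be the case analysis in the middle: controlling which collections of surviving $E_1$-terms can coexist for a genuinely indecomposable bundle, ruling out mixed configurations (e.g. a term from $\Omega_\pi(L)$ together with a term from the $\cO_X(-F)$/$\cO_X(F-L)$ block) by showing that the $\delta_1$-differential either forces a direct-sum splitting — contradicting indecomposability — or is incompatible with the shifts in \eqref{base}. Managing the signs and degrees in \eqref{base} (the various $[1]$ and $[2]$ shifts) so that the differentials land where claimed, and invoking \eqref{nomaps} at exactly the right spots, is the delicate part; the rest is Riemann--Roch and Serre duality bookkeeping on $\PP^1\times\PP^2$.
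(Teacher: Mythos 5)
Your overall strategy (normalize the twist, run the Beilinson spectral sequence for the collection \eqref{base}, and convert surviving entries into $\cF$ via Lemmas \ref{Omega} and \ref{uovo di colombo}) is the paper's strategy, but there is a genuine error in the cohomological bookkeeping that derails the middle of the argument. The ACM hypothesis kills $\HH^1(\cF(k))$ and $\HH^2(\cF(k))$ only for twists by multiples of $H$; the dual collection \eqref{base} involves $\cO_X(L)$, $\cT_\pi(F)$ and $\cO_X(F+2L)$, so the relevant $E_1$-entries include middle cohomologies of \emph{mixed} twists such as $\HH^2(\cF(-L))$, $\HH^1(\cF\otimes\Omega_\pi(-F))$, $\HH^2(\cF\otimes\Omega_\pi(-F))$, $\HH^1(\cF(-F-2L))$, which do not vanish in general. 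These are not a nuisance to be collapsed: the multiplicities $a_{3,3}$ and $a_{4,4}$ of the blocks $\cO_X(-F)$ and $\cO_X(F-L)$ that produce the Ulrich extensions in the conclusion are exactly such entries, so if your ``only $\HH^0$ and $\HH^3$ survive'' claim were correct, the Ulrich family — the whole wild part of the classification — would never appear. Consequently the expectation that everything sits on two adjacent anti-diagonals and the sequence degenerates at $E_2$ (or with a single $\delta_1$) is false; the paper has to chase differentials up to $\delta_4$ to identify $E_\infty^{0,5}$ (Lemma \ref{esatta}), and only a few entries ($a_{1,3},a_{2,3},a_{2,4},a_{3,4}$) are killed, by a survival-off-the-diagonal argument using \eqref{nomaps}, not by ACM vanishing.

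The second, independent gap is the splitting step. The spectral sequence only gives a filtration of $\cF$ with a subsheaf $\cF_1$ that is an extension of copies of $\cO_X(-F)$ and $\cO_X(F-L)$ (hence Ulrich) and quotient $E_\infty^{0,5}$; indecomposability by itself does not let you conclude that $\cF$ is one of the two pieces. One needs $\Ext^1_X(E_\infty^{0,5},\cF_1)=0$, and this is the technical heart of the proof: it is obtained by showing $\Ext^1_X(\cG,\cO_X(-F))=\Ext^1_X(\cG,\cO_X(F-L))=0$, which in turn requires recognizing the kernels of the maps \eqref{05}, \eqref{24}, \eqref{43} as (twisted) pullbacks of torsion-free sheaves from $\PP^1$ or $\PP^2$ and invoking K\"unneth and Serre duality on the factors. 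Your sentence asserting that the $\delta_1$-differential ``either forces a direct-sum splitting — contradicting indecomposability — or is incompatible with the shifts'' is precisely the point where an argument is required and none is given; \eqref{nomaps} alone does not rule out a non-split extension of $E_\infty^{0,5}$ by the Ulrich sub. The endgame you describe (identifying $E_\infty^{0,5}$ with $\cO_X(-L)$ or $\cO_X(-1)$ via Lemma \ref{uovo di colombo}, and the easy check that extensions of the two Ulrich line bundles are Ulrich) does match the paper, but as written the proposal does not get there.
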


The words ``up to a twist'' have the following
more precise meaning: up to replacing $\cF$ with $\cF(t)$ we may assume
that $h^0(\cF)=0$ and $h^0(\cF(1))\ne 0$. Then $\cF$ is exactly one
of the sheaves appearing in the statement of Theorem \ref{classifica}.
In other words, Theorem \ref{classifica} proves Theorem \ref{B} from
the introduction.
\bigskip

We will prove the theorem through several claims.
The very first argument is to use Lemma \ref{Omega}. Note that the
vanishing conditions appearing in that lemma are verified for any
twist of $\cF$ since $\cF$ is ACM, so if there is a twist $t \in \ZZ$ such
that $\HH^1(\cF(t H-L))\ne 0$, we will have $\cF(t) \simeq
\Omega_\pi(L)$. Theorem \ref{classifica} is proved in this case.

\smallskip

Therefore, from now on we may assume $\HH^1(\cF(t H-L)) = 0$ for all $t
\in \ZZ$.
The next step is to observe that, since $\cF$ locally free and $\cO_X(1)$ is
very ample, there is a unique $t_0 \in \ZZ$ such that $\cF(t_0)$ satisfies
$h^0(\cF(t_0))=0$ and $h^0(\cF(t_0+1))\ne 0$. We implicitly replace $\cF$
with $\cF(t_0)$ from now on. In particular we have $\HH^0(X,\cF)=0$. We put:
\[
a_{i,j}=\dim_{\kk} \Ext^i_X({}^\rd\cE_{j},\cF).
\]

\begin{claim}
  Let $\cF$ be as above. Then $a_{1,3}=a_{2,4}=a_{2,3}=a_{3,4}=0$.
\end{claim}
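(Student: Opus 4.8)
The plan is to evaluate the four $\Ext$-groups directly, reducing each to the cohomology of a line-bundle twist of $\cF$. First I would unwind the shifts in the left dual collection \eqref{base}: writing ${}^\rd\cE_j=G_j[k_j]$ with $G_j$ a sheaf, one has $a_{i,j}=\dim_\kk\Ext^{i-k_j}_X(G_j,\cF)$. Since ${}^\rd\cE_4=\cO_X(F+2L)[2]$, this yields
\[
a_{2,4}=\dim_\kk\Hom_X(\cO_X(F+2L),\cF)=\dim_\kk\HH^0(\cF(-F-2L)),\qquad a_{3,4}=\dim_\kk\HH^1(\cF(-F-2L)),
\]
and since ${}^\rd\cE_3=\cT_\pi(F)[1]$, it yields $a_{1,3}=\dim_\kk\Hom_X(\cT_\pi(F),\cF)$ and $a_{2,3}=\dim_\kk\Ext^1_X(\cT_\pi(F),\cF)$.

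The two groups attached to ${}^\rd\cE_4$ are then immediate. The divisor $F+2L$ is effective, so multiplication by a nonzero section of $\cO_X(F+2L)$ embeds $\HH^0(\cF(-F-2L))$ into $\HH^0(\cF)=0$ (here I use that $\cF$ is locally free and $X$ is integral); hence $a_{2,4}=0$. For $a_{3,4}$ I would rewrite $-F-2L$ as $-H-L=tH-L$ with $t=-1$ and invoke the running hypothesis that $\HH^1(\cF(tH-L))=0$ for all $t\in\ZZ$, so that $a_{3,4}=0$. It is worth isolating the one slightly subtle point of the argument: this last vanishing is \emph{not} a consequence of $\cF$ being ACM — the ACM condition controls $\HH^1$ only of twists by multiples of $H$ — rather it is exactly the extra information on $\cF$ inherited from Lemma \ref{Omega}.

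For the groups attached to ${}^\rd\cE_3$ I would use the relative Euler sequence obtained by pulling back the Euler sequence of $\PP^2$ along $\sigma$ and twisting by $\cO_X(F)$, recalling $\cO_X(F+L)=\cO_X(1)$:
\[
0 \to \cO_X(F) \to \cO_X(1)^{\oplus 3} \to \cT_\pi(F) \to 0.
\]
Applying $\Hom_X(-,\cF)$ gives an exact sequence
\[
0 \to \Hom_X(\cT_\pi(F),\cF) \to \HH^0(\cF(-1))^{\oplus 3} \to \HH^0(\cF(-F)) \to \Ext^1_X(\cT_\pi(F),\cF) \to \HH^1(\cF(-1))^{\oplus 3}.
\]
Now $\HH^0(\cF(-1))=0$ and $\HH^0(\cF(-F))=0$ because $H$ and $F$ are effective and $\HH^0(\cF)=0$, while $\HH^1(\cF(-1))=0$ since $\cF$ is ACM. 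Reading off this sequence gives $a_{1,3}=0$ and $a_{2,3}=0$, which completes the claim.

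I do not expect a genuine obstacle: the only things to watch are the shift bookkeeping in \eqref{base} and — as flagged above — not conflating the ACM vanishings (twists by $\cO_X(kH)$ only) with the separate hypothesis $\HH^1(\cF(tH-L))=0$ carried over from Lemma \ref{Omega}.
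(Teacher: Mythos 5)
Your proof is correct, but it takes a genuinely different route from the paper's. You unwind the shifts in \eqref{base} and compute each group directly: $a_{2,4}=h^0(\cF(-F-2L))$ dies by multiplying with a section of the effective divisor $F+2L$ and using $\HH^0(\cF)=0$; $a_{3,4}=h^1(\cF(-H-L))$ dies by the running hypothesis inherited from Lemma \ref{Omega} (and you are right to flag that this is not an ACM vanishing); and $a_{1,3},a_{2,3}$ die by applying $\Hom_X(-,\cF)$ to the twisted relative Euler sequence $0\to\cO_X(F)\to\cO_X(1)^{\oplus 3}\to\cT_\pi(F)\to 0$, using $\HH^0(\cF(-1))=\HH^0(\cF(-F))=0$ and $\HH^1(\cF(-1))=0$. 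The paper argues instead entirely inside the Beilinson spectral sequence: since $E_1^{p,q}=0$ for $p+q\le 2$ and the relevant differentials out of $E_1^{1,2}$, $E_1^{2,1}$, $E_1^{1,3}$, $E_1^{2,2}$ vanish by the orthogonality relations \eqref{nomaps}, these summands would survive to $E_\infty$, contradicting the concentration of $E_\infty^{p,q}$ on $p+q=5$ unless the corresponding $a_{i,j}$ are zero. Your approach is more elementary and self-contained — it does not need \eqref{nomaps}, and it even shows that entries such as $a_{2,4}$ vanish for reasons the paper's cohomology table leaves open — whereas the paper's argument is uniform across the four entries, avoids the Euler-sequence computation, and recycles the same convergence mechanism immediately afterwards to show that $\cO_X(-F)^{\oplus a_{3,3}}$ and $\cO_X(F-L)^{\oplus a_{4,4}}$ survive as direct summands of $\gr(\cF)$. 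Either argument is acceptable here; only the bookkeeping $-F-2L=-H-L$ and the distinction between ACM vanishings and the Lemma \ref{Omega} hypothesis need the care you already gave them.
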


\begin{proof}
  Recall that, in view of  Lemma \ref{Omega} we may assume $\HH^1(\cF(-L))=0$.
  Let us summarize the vanishing conditions we have so far by writing
  down the matrix $(a_{i,j})$. Traditionally one rather writes the table $(b_{i,j})=(a_{5-i,5-j})$:
  \[
  \begin{array}{|c|c|c|c|c|c|}
    \hline
    \cF(-2)[-2] & \cF(-F-2L)[-2] & \cF\otimes \Omega_\pi(-F)[-1] & \cF(-1)[-1] & \cF(-L) & \cF \\
    \hline
    \hline
    a_{5,5} & a_{5,4} & 0 & 0 & 0 & 0 \\
    \hline
    0 & a_{4,4} & a_{4,3} & a_{4,2} & 0 & 0 \\
    \hline
    0 & a_{3,4} & a_{3,3} & 0 & a_{3,1} & a_{3,0} \\
    \hline
    0 & a_{2,4} & a_{2,3} & 0 & a_{2,1} & 0 \\
    \hline
    0 & 0 & a_{1,3} & 0 & 0 & 0  \\
    \hline
    0 & 0 & 0 & 0 & 0 & 0\\
    \hline
    \hline
    \cO_X(-L) & \cO_X(F-L) & \cO_X(-F) & \cO_X(L-F) & \Omega_\pi(L) & \cO_X \\
    \hline
  \end{array}
  \]

  This table means that the $(p,q)$-th term of $E_1^{p,q}$ is the
  direct sum of as many copies of $\cE_i$ as the coefficient $(b_{i,j})$ appearing in the
  above table.
  Also, the coefficients above are obtained by computing the dimension of the
  cohomology of the bundle appearing on the $i$-th column of the first
  row, reading cohomological degree from bottom to top, with a shift
  indicated by the brackets.

  Let us focus on the summand $\cO_X(F-L)^{\oplus
    a_{2,4}}=E_1^{1,2}$. By \eqref{nomaps}
  we have $\delta_1^{1,2}=0$. Obviously $\delta_r^{1,2}=0$ for $r \ge
  2$. Also, $E_1^{p,q}=0$ for $p+q \le 2$, so
  $\cO_X(F-L)^{\oplus
    a_{2,4}}$ survives to $E_\infty^{1,2}$, which in turn is zero
  because $E_\infty^{p,q}$ is concentrated at $p+q=5$.
  Therefore $a_{2,4}=0$.
  By the same reason we get $a_{1,3}=0$. Summing up, $E_1^{p,q}=0$ for $p+q \le 3$.

  Let us now look at the summand $\cO_X(-F)^{\oplus
    a_{2,3}}=E_1^{2,2}$. The map $\delta_r^{2,2}$ is clearly zero for
  all $r \ge 1$, and since we proved $E_1^{p,q}=0$ for $p+q \le 3$ we
  get $a_{2,3}=0$ again because  $E_\infty^{p,q}$ is concentrated at $p+q=5$.
  The last vanishing $a_{3,4}=0$ follows a similar pattern.
\end{proof}

In terms of the Beilinson spectral sequence, the previous claim shows
$E_1^{p,q}=0$ for $p+q \le 4$.
Because of \eqref{nomaps}, we have $\delta_r^{2,3}=0$ for all $r \ge
1$, so the vanishing of $E_1^{p,q}$ with $p+q \le 4$ implies that the
term $\cO_X(-F)^{\oplus a_{3,3}}$ survives at $E_\infty^{2,3}$ and is thus
a direct summand of $\gr(\cF)$.
By the same reason, $\cO_X(F-L)^{\oplus a_{4,4}}$ survives at $E_\infty^{1,4}$.
This means that the filtration of $\cF$ induced by the Beilinson-type
spectral sequence takes the form:
\begin{align}
\label{filtra}& 0 = \cF_6 \subset \cF_5 \subset \cdots \subset \cF_0=\cF, &&
                                                              \mbox{with:}
  && \cF_5=\cF_4=\cF_3=0, \\
\nonumber &&&&& \cF_2 \simeq \cO_X(-F)^{\oplus a_{3,3}}, \\
\nonumber &&&&& \cF_1/\cF_2 \simeq \cO_X(F-L)^{\oplus a_{4,4}}, \\
\nonumber &&&&& \cF/\cF_1 \simeq E_\infty^{0,5}.
\end{align}

Our next goal is to compute $E_\infty^{0,5}$.

\begin{lemma} \label{esatta}
  There is an exact sequence:
  \begin{equation}
    \label{esattasuc}
  0 \to E_\infty^{0,5} \to \cG \to \Omega_\pi(L)^{\oplus a_{2,1}} \to 0,
  \end{equation}
  where $\cG$ is a coherent sheaf on $X$ fitting into a long exact sequence:
  \begin{equation}
    \label{lunghissima}
  \begin{split}
  0 \to \cG &\to \Ker(\delta_1^{0,5}) \to \Ker(\delta_1^{2,4}) \to \Ker(\delta_1^{4,3}) \to\\
   &\to \coker(\delta_1^{0,5}) \to \coker(\delta_1^{2,4}) \to \coker(\delta_1^{4,3}) \to 0.
  \end{split}
  \end{equation}
\end{lemma}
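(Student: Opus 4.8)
The plan is to run the Beilinson differentials out of the position $(0,5)$. Because $E_r^{p,q}=0$ for $p<0$, no differential ever enters $E_r^{0,5}$, so $E_{r+1}^{0,5}=\Ker\bigl(\delta_r^{0,5}\colon E_r^{0,5}\to E_r^{r,6-r}\bigr)$ for every $r\ge 1$ and $E_\infty^{0,5}=\bigcap_{r\ge 1}\Ker\delta_r^{0,5}$. First I would kill the differentials with zero target. The target $E_3^{3,3}$ of $\delta_3^{0,5}$ is a subquotient of $E_1^{3,3}$, a sum of copies of $\cE_3$ with multiplicity $h^2(\cF(-1))=0$ since $\cF$ is ACM on the threefold $X$; hence $\delta_3^{0,5}=0$ and $E_4^{0,5}=E_3^{0,5}$. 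Likewise the target $E_5^{5,1}$ of $\delta_5^{0,5}$ is a subquotient of $E_1^{5,1}$, a sum of copies of $\cO_X$ with multiplicity $h^1(\cF)=0$; hence $\delta_5^{0,5}=0$, and since $\delta_r^{0,5}$ leaves the grid for $r\ge 6$ we get $E_\infty^{0,5}=\Ker\bigl(\delta_4^{0,5}\colon E_4^{0,5}\to E_4^{4,2}\bigr)$.

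Next I would pin down $E_4^{4,2}$. At position $(4,2)$ we have $E_1^{4,2}=\Omega_\pi(L)^{\oplus a_{2,1}}$, and every differential in or out of it before $\delta_4$ vanishes: $\delta_1^{3,2}$ and $\delta_1^{4,2}$ involve sums of copies of $\cE_3$ and $\cO_X$ with multiplicities $h^1(\cF(-1))$ and $h^2(\cF)$, both zero by ACM; $\delta_2^{2,3}\colon E_2^{2,3}\to E_2^{4,2}$ is a sum of maps factoring through $\Hom_X(\cE_2,\cE_4)=0$ and $\delta_3^{1,4}\colon E_3^{1,4}\to E_3^{4,2}$ factors through $\Hom_X(\cE_1,\cE_4)=0$, both vanishings being instances of \eqref{nomaps} (one also uses $E_2^{2,3}=E_1^{2,3}$ and $E_3^{1,4}=E_1^{1,4}$, the neighbouring positions being zero by the previous claim and ACM). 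Hence $E_4^{4,2}=E_1^{4,2}=\Omega_\pi(L)^{\oplus a_{2,1}}$, and $\delta_4^{0,5}$ is surjective onto it, because its cokernel is $E_5^{4,2}=E_\infty^{4,2}=0$, the spectral sequence converging to a sheaf and hence being concentrated on $p+q=5$. Setting $\cG:=E_4^{0,5}$ yields the short exact sequence \eqref{esattasuc}.

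For the long exact sequence \eqref{lunghissima}, recall $\cG=E_4^{0,5}=E_3^{0,5}=\Ker\delta_2^{0,5}$. Using $E_1^{p,q}=0$ for $p+q\le 4$, the relation $\delta_1^{1,4}=0$ (from $\Hom_X(\cE_1,\cE_2)=0$) and the cohomology vanishings above, the $E_2$-page has, on the antidiagonals $p+q\in\{5,6,7,8\}$, exactly the six nonzero terms $\Ker\delta_1^{0,5}$, $\Ker\delta_1^{2,4}$, $\Ker\delta_1^{4,3}$ and $\coker\delta_1^{0,5}$, $\coker\delta_1^{2,4}$, $\coker\delta_1^{4,3}$ at positions $(0,5),(2,4),(4,3)$ and $(1,5),(3,4),(5,3)$. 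The $\delta_2$-differentials arrange them into two three-term complexes $E_2^{0,5}\to E_2^{2,4}\to E_2^{4,3}$ and $E_2^{1,5}\to E_2^{3,4}\to E_2^{5,3}$. Convergence forces $E_3^{2,4}=E_3^{3,4}=E_3^{5,3}=0$ (these positions lie off the antidiagonal $p+q=5$ and receive no differential of index $\ge 3$), so both complexes are exact in the middle and $\delta_2^{3,4}$ is onto; moreover the only differential relating $(1,5)$ and $(4,3)$ after the $E_2$-page is $\delta_3^{1,5}\colon E_3^{1,5}=\Ker\delta_2^{1,5}\to E_3^{4,3}=E_2^{4,3}/\im\delta_2^{2,4}$, which must be an isomorphism since $E_\infty$ vanishes at both positions. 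Composing $E_2^{4,3}\twoheadrightarrow E_2^{4,3}/\im\delta_2^{2,4}\xrightarrow{(\delta_3^{1,5})^{-1}}\Ker\delta_2^{1,5}\hookrightarrow E_2^{1,5}$ produces the connecting arrow, and splicing the two complexes along it, together with $\cG=\Ker\delta_2^{0,5}$ at the left end, gives precisely \eqref{lunghissima}.

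The content of the lemma is entirely this bookkeeping: the only external inputs are the Künneth vanishings \eqref{nomaps}, the ACM vanishing of $\HH^1_*(\cF)$ and $\HH^2_*(\cF)$, and the claim that $E_1^{p,q}=0$ for $p+q\le 4$; everything else is spectral-sequence chasing. The step requiring the most care is the repeated appeal to convergence to identify late $E_r$-terms with $E_\infty$ (hence with $0$ off the critical antidiagonal), and the correct tracking of the subquotients through which $\delta_2$, $\delta_3$ and the connecting map factor, since an index slip there would be easy to miss.
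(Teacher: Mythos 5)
Your argument is correct and follows essentially the same route as the paper: a page-by-page analysis of the Beilinson spectral sequence using \eqref{nomaps}, the ACM vanishings, the previous claim and the concentration of $E_\infty$ on $p+q=5$, identifying $\cG = E_4^{0,5} = E_3^{0,5} = \Ker(\delta_2^{0,5})$ and obtaining \eqref{esattasuc} from the surjection $\delta_4^{0,5} \colon E_4^{0,5} \to E_4^{4,2} \simeq \Omega_\pi(L)^{\oplus a_{2,1}}$ with kernel $E_\infty^{0,5}$. One small imprecision: the claim that the six listed terms are the only nonzero $E_2$-entries on the antidiagonals $p+q \in \{5,\ldots,8\}$ is not literally true, since the positions $(1,4)$, $(2,3)$ and $(4,2)$ carry the a priori nonzero multiplicities $a_{4,4}$, $a_{3,3}$ and $a_{2,1}$; this is harmless, because you verify separately that every differential of page at least $2$ linking these positions to the six relevant ones vanishes, except $\delta_4^{0,5}$ into $(4,2)$, which is precisely the map producing \eqref{esattasuc}.
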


Before going into the proof, let us display the maps $\delta_1^{p,q}$
we are interested in:
\begin{align}
  \label{05} & \delta_1^{0,5} : \cO_X(-L)^{\oplus a_{5,5}} \to \cO_X(F-L)^{\oplus  a_{5,4}}, \\
  \label{24} & \delta_1^{2,4} : \cO_X(-F)^{\oplus a_{4,3}} \to \cO_X(L-F)^{\oplus  a_{4,2}}, \\
  \label{43} & \delta_1^{4,3} : \Omega_\pi(L)^{\oplus a_{3,1}} \to \cO_X^{\oplus  a_{3,0}}.
\end{align}

\begin{proof}
We rewrite the cohomology table $(b_{i,j})$ in view of the vanishing proved
in the previous claim and after removing $a_{3,3}$ and $a_{4,4}$ which
do not contribute to $E_\infty^{0,5}$ as we have just seen.
  \[
  \begin{array}{|c|c|c|c|c|c|}
    \hline
    \cF(-2)[-2] & \cF(-F-2L)[-2] & \cF\otimes \Omega_\pi(-F)[-1] & \cF(-1)[-1] & \cF(-L) & \cF \\
    \hline
    \hline
    a_{5,5} & a_{5,4} & 0 & 0 & 0 & 0 \\
    \hline
    0 & 0 & a_{4,3} & a_{4,2} & 0 & 0 \\
    \hline
    0 & 0 & 0 & 0 & a_{3,1} & a_{3,0} \\
    \hline
    0 & 0 & 0 & 0 & a_{2,1} & 0 \\
    \hline
    0 & 0 & 0 & 0 & 0 & 0  \\
    \hline
    0 & 0 & 0 & 0 & 0 & 0\\
    \hline
    \hline
    \cO_X(-L) & \cO_X(F-L) & \cO_X(-F) & \cO_X(L-F) & \Omega_\pi(L) & \cO_X \\
    \hline
  \end{array}
  \]

  In view of this table, we see that the differential $\delta_1$ has
  only three possibly
  non-zero terms, namely $\delta_1^{0,5}$, $\delta_1^{2,4}$ and
  $\delta_1^{4,3}$. So $E_2^{p,q}$ differs from $E_1^{p,q}$ only when
  $(p,q)$ equals $(0,5)$, $(1,5)$, $(2,4)$, $(3,4)$, $(4,3)$ and
  $(5,3)$, and we get:
  \begin{align*}
  & E_2^{0,5}\simeq \ker(\delta_1^{0,5}), && E_2^{1,5}\simeq \coker(\delta_1^{0,5}),\\
  & E_2^{2,4}\simeq \ker(\delta_1^{2,4}), && E_2^{3,4}\simeq \coker(\delta_1^{2,4}),\\
  & E_2^{4,3}\simeq \ker(\delta_1^{4,3}), && E_2^{5,3}\simeq \coker(\delta_1^{4,3}).
  \end{align*}

  Now, since $E_\infty^{p,q}$ is concentrated at $p+q=5$, we realize that actually
  $E_3^{5,3}=0$ so the map $\delta_2^{3,4} : E_2^{3,4} \to E_2^{5,3}$ is surjective and
  actually also $E_3^{3,4}=0$, hence the kernel of $\delta_2^{3,4}$ is the image of $\delta_2^{0,5}$. We have thus proved the
  second line of \eqref{lunghissima}. By the same reason we have the exactness
  of the sequence:
  \begin{equation}
    \label{pezzo}
  \Ker(\delta_1^{0,5}) \to \Ker(\delta_1^{2,4}) \to \Ker(\delta_1^{4,3}),
  \end{equation}
  where the maps are just $\delta_2^{0,5}$ and  $\delta_2^{2,4}$.

  This completes the analysis of the second page. We turn now to
  $E_3$. Note that $E_3^{1,5}\simeq \Ker(\delta_2^{1,5})$ is the kernel of
  the map $\delta_2^{1,5} : \coker(\delta_1^{0,5}) \to \coker(\delta_1^{2,4})$
  appearing in \eqref{lunghissima}. Similarly $E_3^{4,3}\simeq
  \coker(\delta_2^{2,4})$ is the cokernel of the map $\delta_2^{2,4} : \Ker(\delta_1^{2,4}) \to
  \Ker(\delta_1^{4,3})$ showing up in \eqref{pezzo}. Since
  $E_4^{1,5}=E_4^{4,3}=0$, $\delta_3^{1,5}$ gives
  an isomorphism of $E_3^{1,5}$ to $E_3^{4,3}$, hence the exactness of
  \eqref{lunghissima} is proved at the connecting map between the two rows.

  Finally $E_3^{0,5}\simeq \Ker(\delta_2^{0,5})$ is the kernel $\cG$ of the
  first map appearing in \eqref{pezzo} and clearly $E_3^{0,5} \simeq E_4^{0,5}$. The
  map $\delta_4^{0,5}$ thus sends this kernel surjectively onto
  $E_4^{4,2} \simeq \Omega_\pi(L)^{\oplus a_{2,1}}$, with kernel
  $E_5^{0,5} \simeq E_\infty^{0,5}$.
  The lemma is thus proved.
\end{proof}

\begin{lemma}
  In the previous setting, we have:
  \[
  \Ext^1_X(\cG,\cO_X(F-L))=  \Ext^1_X(\cG,\cO_X(-F))=0.
  \]
\end{lemma}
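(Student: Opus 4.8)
\emph{Strategy.} The plan is a dévissage controlled by the semiorthogonality of the collection $\cE_\bullet$, exploiting that $\cG$ is built out of the six bundles $\cE_0,\dots,\cE_5$ while the two targets $\cO_X(F-L)=\cE_1$ and $\cO_X(-F)=\cE_2$ sit far to the right. Recall from the proof of Lemma \ref{esatta} that $\cG=E_3^{0,5}$, so there is an exact sequence
\[
0\to\cG\to\cE_0^{\oplus a_{5,5}}\to\cQ\to0
\]
in which $\cQ$ carries a two-step filtration $0\to\cB'\to\cQ\to\cB\to0$ with $\cB=\im\delta_1^{0,5}\subseteq\cE_1^{\oplus a_{5,4}}$ (cokernel $C_1=\coker\delta_1^{0,5}$) and $\cB'=\im\delta_2^{0,5}\subseteq\Ker\delta_1^{2,4}\subseteq\cE_2^{\oplus a_{4,3}}$. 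Applying $\Hom_X(-,\cE_j)$ to the first sequence and using the Künneth vanishings $\Ext^{1}_X(\cE_0,\cE_1)=\Ext^2_X(\cE_0,\cE_1)=\HH^{\ge1}(\cO_X(F))=0$ and $\Ext^{1}_X(\cE_0,\cE_2)=\Ext^2_X(\cE_0,\cE_2)=\HH^{\ge1}(\cO_X(L-F))=0$, one reduces to proving $\Ext^2_X(\cQ,\cE_j)=0$, and then, by the filtration of $\cQ$, to $\Ext^2_X(\cB,\cE_j)=\Ext^2_X(\cB',\cE_j)=0$ for $j\in\{1,2\}$.

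\emph{The $\Ext$-dictionary and the chase.} I would first record, via semiorthogonality, exceptionality and Künneth (with $\omega_X\simeq\cO_X(-2F-3L)$), that $\Ext^{\ge1}_X(\cE_i,\cE_1)=0$ for every $i$, that $\Ext^{\ge2}_X(\cE_i,\cE_2)=0$ for every $i$ (the only nonzero off-diagonal group being $\Ext^1_X(\cE_1,\cE_2)=\kk^3$), and that
\[
\Hom_X(\cE_j,\cE_i\otimes\omega_X)=0\qquad\text{for all }i\text{ and }j\in\{1,2\}.
\]
The last identity, through Serre duality, gives $\Ext^3_X(\mathcal S,\cE_j)=0$ for every coherent sheaf $\mathcal S$ injecting into a direct sum of the $\cE_i$'s, while $\dim X=3$ kills all $\Ext^{\ge4}$. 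With these one runs the chase on $\cB$ and $\cB'$ through $0\to\cB\to\cE_1^{\oplus}\to C_1\to0$ and $0\to\Ker\delta_1^{2,4}\to\cE_2^{\oplus}\to\im\delta_1^{2,4}\to0$, propagating the vanishing through the remaining kernels and cokernels $C_1,C_2,C_3,K_3=\Ker\delta_1^{4,3}$ along the gluing sequence \eqref{lunghissima} and the sequence \eqref{esattasuc}. A useful simplification: again from the proof of Lemma \ref{esatta}, $K_1=E_2^{0,5}$ is the pull-back along $\pi$ of a subbundle of a trivial bundle on $\PP^1$, hence $K_1\simeq\bigoplus_i\cO_X(-e_iF-L)$ with $e_i\ge0$; the summands with $e_i=0$ split off a copy of $\cO_X(-L)$ from $\cG$, and one is left with $\Ext$-computations against the line bundles $\cO_X(-e_iF-L)$ with $e_i\ge1$ (for which $\Ext^{\ge1}_X(\cO_X(-e_iF-L),\cE_j)=0$ by Künneth) and the syzygy bundle $\Ker\delta_1^{2,4}\subseteq\cE_2^{\oplus}$.

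\emph{Main obstacle and a cleaner route.} The difficulty is that the naive long-exact-sequence bookkeeping keeps producing relations of the shape ``$\Ext^2\cong$ (a quotient of)\ $\Ext^2$'', which do not on their own force vanishing; breaking these circularities is precisely where one must use that every sheaf in play is a cohomology object of the Beilinson spectral sequence, hence an honest bounded complex of direct sums of the $\cE_i$'s, so the inductive descent terminates. For that reason I would first try the cleaner route of unfolding the three surviving columns $[\cE_0^{\oplus}\xrightarrow{\delta_1^{0,5}}\cE_1^{\oplus}]$, $[\cE_2^{\oplus}\xrightarrow{\delta_1^{2,4}}\cE_3^{\oplus}]$, $[\cE_4^{\oplus}\xrightarrow{\delta_1^{4,3}}\cE_5^{\oplus}]$ together with the $\delta_2$-differentials into a single bounded complex $\mathcal P^\bullet$ of direct sums of the $\cE_i$'s quasi-isomorphic to $\cG$; then $\rR\Hom_X(\cG,\cE_j)$ is the cohomology of $\rR\Hom_X(\mathcal P^\bullet,\cE_j)$, all of whose terms $\rR\Hom_X(\cE_i,\cE_j)$ with $i>j$ vanish, and the surviving terms (only $\cE_0$, resp.\ only $\cE_0,\cE_1,\cE_2$) sit in cohomological degree $0$, forcing $\Ext^1_X(\cG,\cE_j)=0$. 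As a by-product one also obtains $\Ext^1_X(\cF,\cE_1)=0$ directly, from the Euler-type sequence $0\to\cO_X(-H)\to\cO_X(-L)^{\oplus2}\to\cE_1\to0$ together with the standing hypotheses that $\cF$ is ACM and $\HH^1(\cF(tH-L))=0$ for all $t$.
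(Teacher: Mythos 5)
Your opening reduction is sound and close in spirit to the paper's own argument: the paper likewise dévisses $\cG$ against $\cN\in\{\cO_X(F-L),\cO_X(-F)\}$, using \eqref{lunghissima} to reduce to $\Ext^i_X(K_i,\cN)=0$ for $i=1,2,3$, where $K_i=\Ker(\delta_1^{2i-2,6-i})$, and then proves these three vanishings by identifying the kernels structurally ($K_1\simeq\bigoplus_j\cO_X(-L-jF)^{\oplus c_j}$ pulled back from $\PP^1$ since \eqref{05} is constant along $\PP^2$; $K_2\simeq\sigma^*(\cV)(-F)$ with $\HH^0(\cV(-1))=0$; $K_3\simeq\sigma^*(\cW)$), followed by Künneth and Serre duality on $\PP^2$. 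Your Ext-dictionary, your Serre-duality observation that $\Ext^3_X(\mathcal S,\cN)=0$ for any subsheaf $\mathcal S$ of a direct sum of the $\cE_i$'s, and your description of $K_1$ are all correct, and in fact they would suffice, provided you chase on the kernel side rather than the cokernel side: $0\to K_1\to\cE_0^{\oplus}\to\cB\to0$ gives $\Ext^2_X(\cB,\cN)=0$ from $\Ext^1_X(K_1,\cN)=\Ext^2_X(\cE_0,\cN)=0$, while $0\to K_2\to\cE_2^{\oplus}\to\im(\delta_1^{2,4})\to0$ and $0\to\cB'\to K_2\to K_2/\cB'\to0$ with $K_2/\cB'\hookrightarrow K_3\subseteq\Omega_\pi(L)^{\oplus a_{3,1}}$ give $\Ext^2_X(\cB',\cN)=0$, because every $\Ext^3$ that appears is taken against a subsheaf of a sum of the $\cE_i$'s.

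As submitted, however, the proof is not complete, and the route you actually commit to contains a genuine gap. Your chase through $0\to\cB\to\cE_1^{\oplus}\to C_1\to0$ needs $\Ext^3_X(C_1,\cN)=0$, which your dictionary does not give ($C_1$ is a quotient, not a subsheaf, of a sum of the $\cE_i$'s); you acknowledge the resulting circularity but do not resolve it. The ``cleaner route'' does not repair this: the differentials $\delta_2^{p,q}$ are maps between subquotients on the second page and are not induced by sheaf maps between the $E_1$-columns, so there is no construction of a bounded complex $\mathcal P^\bullet$ of direct sums of the $\cE_i$'s quasi-isomorphic to the intermediate object $\cG=E_3^{0,5}$ by ``unfolding the columns and the $\delta_2$-differentials''; being a page entry of the spectral sequence does not produce such a resolution. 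Moreover, even if such a $\mathcal P^\bullet$ existed, your degree count fails for $\cN=\cO_X(-F)$: by your own dictionary $\rR\Hom_X(\cE_1,\cO_X(-F))$ is concentrated in degree $1$, so whether it contributes to $\Ext^1_X(\cG,\cO_X(-F))$ depends on the homological position of the $\cE_1$-terms in $\mathcal P^\bullet$, which you have not determined. The key vanishings therefore have to be established as in the paper (or by the kernel-side chase sketched above), not by the proposed unfolding.
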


\begin{proof}
  We use the exact sequence \eqref{lunghissima}.
  Indeed, let $\cN$ be one of the two line bundles $\cO_X(F-L)$ or
  $\cO_X(-F)$ and  apply $\Hom_X(-,\cN)$ to \eqref{lunghissima}. Set $\cG_i$
  for the image of the $i$-th map
  $\delta_2^{2i-2,6-i}$ of \eqref{lunghissima}. Then our
  statement is proved if we show that:
  \begin{equation}
    \label{sannullano}
  \Ext^i_X(\Ker(\delta_1^{2i-2,6-i}),\cN)=0, \qquad  \mbox{for $i=1,2,3$}.
  \end{equation}
  Indeed, this would imply $\Ext^{i+1}_X(\cG_{i},\cN)=0$ for $i=1,2$
  which in turn would give $\Ext^1_X(\cG,\cN)=0$, which is our
  statement.

  To check \eqref{sannullano} we look more closely at the defining maps
  \eqref{05},   \eqref{24} and   \eqref{43}.
  For $i=1$, we note that \eqref{05} is constant along the factor
  $\PP^2$ of the product $X \simeq \PP^1 \times \PP^2$ so
  $\ker(\delta_1^{0,5})$ is the pull-back to $X$ of a  torsion-free
  sheaf on $\PP^1$, twisted by $\cO_X(-L)$. Such sheaf is then locally
  free on $\PP^1$ and therefore splits as a direct sum of line bundles. Actually,
  the form of \eqref{05} implies that there are integers $c_j$, one
  for each $j \in \NN$ (with only finitely many values of $j \in \NN$
  satisfying $c_j \ne 0$) such that:
  \[
  \ker(\delta_1^{0,5}) \simeq \bigoplus_{j \in \NN}
  \cO_X(-L-j F)^{\oplus c_j}.
  \]
  It follows plainly that $\Ext^1_X(\ker(\delta_1^{0,5}),\cN)=0$
  for our choices of $\cN$.

  \medskip
  For $i=2$, applying a similar argument to \eqref{24} we get that there exists a
  torsion-free sheaf $\cV$ on $\PP^2$ such that:
  \begin{equation}
    \label{P2}
  \ker(\delta_1^{2,4}) \simeq \sigma^*(\cV) \otimes \cO_X(-F), \qquad \HH^0(\PP^2,\cV(-1))=0.
  \end{equation}
  Therefore, by Künneth's formula we have:
  \[
  \Ext^2_X(\ker(\delta_1^{2,4}),\cO_X(F-L)) \simeq
  \Ext^2_{\PP^2}(\cV,\cO_{\PP^2}(-1)) \otimes \HH^0(\PP^1,\cO_{\PP^1}(1)),
  \]
  which vanishes because Serre duality and \eqref{P2} imply:
  \[
  \Ext^2_{\PP^2}(\cV,\cO_{\PP^2}(-1))\simeq
  \Hom_{\PP^2}(\cO_{\PP^2},\cV(-2))^\vee = 0.
  \]
  The vanishing for $\cN=\cO_X(-F)$ is clear.

  \medskip
  For $i=3$, again looking at \eqref{43} we get a torsion-free sheaf $\cW$
  on $\PP^2$ such that:
  \begin{equation}
    \label{pullback}
  \ker(\delta_1^{4,3}) \simeq \sigma^*(\cW).
  \end{equation}
  This time Künneth's formula provides
  $\Ext^3_X(\ker(\delta_1^{4,3}),\cN)=0$ immediately.
\end{proof}

\begin{lemma}
  For any sheaf $\cU$ which is an extension of copies of $\cO_X(F-L)$
  and $\cO_X(-F)$, we have $\Ext^1_X(E_\infty^{0,5},\cU)=0$.
\end{lemma}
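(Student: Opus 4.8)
The plan is a short dévissage followed by a cohomology computation on the product $X\simeq\PP^1\times\PP^2$. First I would reduce to the case where $\cU$ is one of the two line bundles $\cO_X(F-L)$, $\cO_X(-F)$: if $0\to\cU'\to\cU\to\cU''\to 0$ is exact, then applying $\Hom_X(E_\infty^{0,5},-)$ yields the exact piece $\Ext^1_X(E_\infty^{0,5},\cU')\to\Ext^1_X(E_\infty^{0,5},\cU)\to\Ext^1_X(E_\infty^{0,5},\cU'')$, so the vanishing of $\Ext^1_X(E_\infty^{0,5},-)$ against $\cU$ follows from the vanishing against $\cU'$ and $\cU''$. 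Since by hypothesis $\cU$ is an iterated extension with factors among $\cO_X(F-L)$ and $\cO_X(-F)$, an induction on the length of such a filtration reduces the statement to $\cU=\cN$ with $\cN\in\{\cO_X(F-L),\cO_X(-F)\}$.

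Next I would feed the short exact sequence \eqref{esattasuc}
\[
0\to E_\infty^{0,5}\to\cG\to\Omega_\pi(L)^{\oplus a_{2,1}}\to 0
\]
into $\Hom_X(-,\cN)$, obtaining an exact piece $\Ext^1_X(\cG,\cN)\to\Ext^1_X(E_\infty^{0,5},\cN)\to\Ext^2_X(\Omega_\pi(L),\cN)^{\oplus a_{2,1}}$. The left-hand term vanishes by the previous lemma (which gives exactly $\Ext^1_X(\cG,\cO_X(F-L))=\Ext^1_X(\cG,\cO_X(-F))=0$), so the whole claim is reduced to checking $\Ext^2_X(\Omega_\pi(L),\cN)=0$ for the two line bundles $\cN$.

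For this last point, since $\Omega_\pi(L)$ is locally free we have $\Ext^2_X(\Omega_\pi(L),\cN)\simeq\HH^2(X,\cT_\pi(-L)\otimes\cN)$, and $\cT_\pi(-L)\simeq\sigma^*(\Omega_{\PP^2}^\vee(-1))$ is pulled back from the $\PP^2$-factor. When $\cN=\cO_X(-F)=\pi^*\cO_{\PP^1}(-1)$, Künneth's formula expresses this $\HH^2$ as a sum of products $\HH^p(\PP^1,\cO_{\PP^1}(-1))\otimes\HH^q(\PP^2,\Omega_{\PP^2}^\vee(-1))$ with $p+q=2$, which vanishes because $\HH^\bullet(\PP^1,\cO_{\PP^1}(-1))=0$. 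When $\cN=\cO_X(F-L)$, Künneth leaves only the summand $\HH^0(\PP^1,\cO_{\PP^1}(1))\otimes\HH^2(\PP^2,\Omega_{\PP^2}^\vee(-2))$, and Serre duality on $\PP^2$ together with $\omega_{\PP^2}\simeq\cO_{\PP^2}(-3)$ identifies the second factor with $\HH^0(\PP^2,\Omega_{\PP^2}(-1))^\vee$, which is zero by the Euler sequence. I do not expect any genuine obstacle here: the argument is entirely formal, the only care needed being the bookkeeping of the twists so that one lands on the vanishing line bundles on the $\PP^1$-factor, and a correct appeal to the previous lemma for the $\cG$-term.
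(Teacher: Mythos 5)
Your proof is correct and follows the same route as the paper: dévissage to the two line bundles $\cO_X(-F)$ and $\cO_X(F-L)$, then applying $\Hom_X(-,\cN)$ to the sequence \eqref{esattasuc} so that the claim reduces to $\Ext^1_X(\cG,\cN)=0$ (the previous lemma) and $\Ext^2_X(\Omega_\pi(L),\cN)=0$. The only difference is that you spell out via Künneth and Serre duality the last vanishing, which the paper dismisses as straightforward; your computation of it is accurate.
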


\begin{proof}
  Clearly, it suffices to check that $\Ext^1_X(E_\infty^{0,5},\cN)=0$,
  with $\cN=\cO_X(-F)$ and $\cN=\cO_X(F-L)$. According to Lemma
  \ref{esatta}, we need to check $\Ext^1_X(\cG,\cN)=0$ and
  $\Ext^2_X(\Omega_\pi(L),\cN)=0$. The first vanishing comes from the
  previous lemma and the second one is straightforward.
\end{proof}

Now comes the key point. Indeed, the sheaf $\cF_1$ taken from the
filtration \eqref{filtra} is an Ulrich sheaf of the form
$\cU$ as in the previous lemma.
Therefore, $\cF$ is the direct sum of $E_\infty^{0,5}$ and
$\cF_1$. But $\cF$ is indecomposable, hence either
$\cF_1=0$ and $\cF \simeq E_\infty^{0,5}$, or $\cF \simeq \cU$. In the
latter case Theorem \ref{classifica} is proved, so it remains to
analyze the former one. So we assume from now on $\cF \simeq E_\infty^{0,5}$.

\begin{lemma}
  The sheaf $\cF \simeq E_\infty^{0,5}$ is isomorphic to $\cO_X(-L)$ or $\cO_X(-1)$.
\end{lemma}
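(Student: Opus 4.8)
The plan is to split into two cases according as $\HH^0(\cF(L))$ is nonzero or zero, identifying $\cF$ in each case by means of Lemma~\ref{uovo di colombo}. Let me first collect the data at hand. We know that $\cF$ is an indecomposable locally free sheaf with $\HH^0(\cF)=0$ and $\HH^0(\cF(1))\ne 0$; since $\cF$ is ACM on the threefold $X$ we have $\HH^1_*(\cF)=\HH^2_*(\cF)=0$, in particular $\HH^1(\cF)=0$ and $\HH^2(\cF(-1))=0$; by the reduction already in force, $\HH^1(\cF(-L))=0$; and since $\cF\simeq E_\infty^{0,5}$, the filtration \eqref{filtra} has $\cF_1=0$, forcing $a_{3,3}=a_{4,4}=0$, so in particular $\HH^2(\cF(-F-2L))=a_{4,4}=0$.

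First I would treat the case $\HH^0(\cF(L))\ne 0$. Then $\cF$ is an indecomposable locally free sheaf satisfying $\HH^0(\cF)=\HH^1(\cF(-L))=\HH^2(\cF(-F-2L))=0$ together with $\HH^0(\cF(L))\ne 0$, so Lemma~\ref{uovo di colombo}\ref{isOL} gives $\cF\simeq\cO_X(-L)$ immediately.

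In the remaining case $\HH^0(\cF(L))=0$ I would aim to apply Lemma~\ref{uovo di colombo}\ref{isO}. Three of the four required conditions are already known: $\HH^0(\cF(L))=0$ by assumption, $\HH^0(\cF(1))\ne 0$ by the normalization, and $\HH^2(\cF(-1))=0$ by the ACM property. What is left, and what I expect to be the only non-formal point, is the vanishing $\HH^1(\cF(-F))=0$. To prove it, recall that by Lemma~\ref{esatta} and the long exact sequence \eqref{lunghissima} the sheaf $\cF\simeq E_\infty^{0,5}$ embeds in $\cG$, which in turn embeds in $\Ker(\delta_1^{0,5})\simeq\bigoplus_{j\in\NN}\cO_X(-L-jF)^{\oplus c_j}$. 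Twisting the inclusion $\cF\hookrightarrow\bigoplus_{j}\cO_X(-L-jF)^{\oplus c_j}$ by $\cO_X(F)$ and taking global sections gives
\[
\HH^0(\cF(F))\hookrightarrow\bigoplus_{j}\HH^0\big(\cO_X((1-j)F-L)\big)^{\oplus c_j}=0,
\]
where the last equality holds because, by Künneth's formula, $\HH^0(\cO_X((1-j)F-L))=\HH^0(\PP^1,\cO_{\PP^1}(1-j))\otimes\HH^0(\PP^2,\cO_{\PP^2}(-1))=0$. On the other hand, tensoring the horizontal Euler sequence $0\to\cO_X(-F)\to\cO_X^{\oplus 2}\to\cO_X(F)\to 0$ with $\cF$ and using $\HH^0(\cF)=\HH^1(\cF)=0$ yields an isomorphism $\HH^1(\cF(-F))\simeq\HH^0(\cF(F))$; hence $\HH^1(\cF(-F))=0$. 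Lemma~\ref{uovo di colombo}\ref{isO} then gives $\cF\simeq\cO_X(-1)$, finishing the proof.

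The hard part is thus the single vanishing $\HH^1(\cF(-F))=0$; everything else amounts to reading off the cohomology table already set up (for $\HH^2(\cF(-F-2L))=a_{4,4}=0$) and to the previously established shape of $\Ker(\delta_1^{0,5})$. In carrying this out one should be careful that $a_{4,4}$ really is $\HH^2(\cF(-F-2L))$ — it is the $\Ext^4$ against the left-dual object $\cO_X(F+2L)[2]$ of the collection \eqref{base} — and that the Euler-sequence step only uses the vanishings $\HH^0(\cF)=0$ and $\HH^1(\cF)=0$, which we have from the normalization and the ACM hypothesis respectively.
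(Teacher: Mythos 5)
Your proof is correct, and its skeleton is exactly that of the paper: split on whether $\HH^0(\cF(L))$ vanishes, read off the needed vanishings ($\HH^0(\cF)=0$, $\HH^1(\cF(-L))=0$, $\HH^2(\cF(-F-2L))=a_{4,4}=0$, $\HH^2(\cF(-1))=0$) from the normalization, the ACM hypothesis and the spectral sequence table, and conclude via Lemma \ref{uovo di colombo} in each case. The only step where you diverge is the crucial vanishing $\HH^1(\cF(-F))=0$ in the second case. The paper gets it by tensoring the vertical Euler sequence $0 \to \Omega_\pi(-F) \to \cO_X(-1)^{\oplus 3} \to \cO_X(-F) \to 0$ with $\cF$ and using the table entry $\HH^2(\cF\otimes\Omega_\pi(-F))=a_{3,3}=0$ together with $\HH^1(\cF(-1))=0$; you instead exploit the structural information from Lemma \ref{esatta} and \eqref{lunghissima}, namely the chain of inclusions $\cF\simeq E_\infty^{0,5}\hookrightarrow\cG\hookrightarrow\Ker(\delta_1^{0,5})\simeq\bigoplus_j\cO_X(-L-jF)^{\oplus c_j}$, to get $\HH^0(\cF(F))=0$ by Künneth, and then convert this into $\HH^1(\cF(-F))=0$ via the pulled-back $\PP^1$-Euler sequence and $\HH^0(\cF)=\HH^1(\cF)=0$. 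Both arguments are sound; the paper's is a one-line extraction from the cohomology table (using $a_{3,3}=0$, which you also have at your disposal), while yours trades that table entry for the explicit splitting of $\Ker(\delta_1^{0,5})$ established in the preceding lemma — in fact one could shortcut your route further, since $E_\infty^{0,5}$ is already a subsheaf of $E_1^{0,5}=\cO_X(-L)^{\oplus a_{5,5}}$, which gives $\HH^0(\cF(F))=0$ directly. Your explicit checks that $a_{4,4}=h^2(\cF(-F-2L))$ (via the left dual object $\cO_X(F+2L)[2]$ in \eqref{base}) and that $\cF_1=0$ forces $a_{3,3}=a_{4,4}=0$ are accurate and match the paper's use of the same facts.
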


\begin{proof}
  Since $\cF \simeq E_\infty^{0,5}$ we have
  $a_{3,3}=a_{4,4}=0$ so the cohomology table $(b_{i,j})$ looks as in
  the proof of Lemma \ref{esatta}.
  We argue now on whether $\HH^0(\cF(L))$ is zero or not.

  If $\HH^0(\cF(L)) \ne 0$, looking at the cohomology table of $\cF$ we
  see that $\HH^1(\cF(-L))=\HH^2(\cF(-F-2L))=0$, and because we are
  assuming $\HH^0(\cF)=0$, we have that item
  \ref{isOL} of Lemma \ref{uovo di colombo} applies to give $\cF
  \simeq \cO_X(-L)$.

  If $\HH^0(\cF(L)) =0$, we use once more the
  vertical Euler sequence, in the form:
  \[
  0 \to \Omega_\pi(-F) \to \cO_X(-1)^{\oplus 3} \to \cO_X(-F) \to 0.
  \]
  We tensor this sequence with $\cF$ and take cohomology.
  From the cohomology table of $\cF$ we
  extract $\HH^2(\cF \otimes \Omega_\pi(-F))=0$,
  which combined with the fact that $\cF$ is ACM
  gives $\HH^1(\cF(-F))=0$. Also, of course $\HH^2(\cF(-1))=0$, while
  $\HH^0(\cF(1)) \ne 0$ by assumption.
  Therefore, item \ref{isO} of Lemma \ref{uovo di colombo} applies and shows $\cF
  \simeq \cO_X(-1)$.

\end{proof}

This completes the proof of Theorem \ref{classifica}.

\subsubsection{Proof of Corollaries \ref{cor1} and \ref{cor2}}
\label{subsection:rigid}

The proof of Corollary \ref{cor1} goes as follows.
Set $\ru(t)=\frac 12(t+2)(t+1)t$ and
note that $\ru$ is the
reduced Hilbert polynomial of an Ulrich sheaf on $X$ initialized by $t=1$.

Let $\cF$ be a
semistable ACM bundle on $X$. 
According to Theorem \ref{B}, for each indecomposable
direct summand $\cG$ of the
graded bundle $\gr(\cF)$ provided by a Jordan-Hölder filtration of $\cF$, there
is some $s \in \ZZ$ such that $\HH^0(\cG(s))=0$ and
$\HH^0(\cG(s+1))\ne 0$, so $\cG(s)$ is one of the sheaves appearing in the next table, where the reduced Hilbert
polynomial is also shown:
\[
  \begin{tabular}{|c|c|c|c|c|c|}
    \hline
    $\cG$ & $\cO_X(-F)$ & $\cO_X(F-L)$ & $\Omega_\pi(L)$ & $\cO_X$ & $\cO_X(-L)$\\
    \hline
    $\rp(\cG(s))$ & $\ru(t)$ &  $\ru(t)$ &  $\ru(t)$
    & $\frac 12(t+2)(t+1)^2$ & $\frac 12(t+1)^2t$ \\
    \hline
  \end{tabular}
\]

Note that these polynomials are pairwise distinct, even upon replacing $t$ by
$t+s$ for any $s \in \ZZ$. Therefore, there is a fixed $s \in \ZZ$
such that either all the summands $\cG(s+1)$ are Ulrich bundles (in
which case the summands $\cG(s)$ are isomorphic to
$\cO_X(-F)$ or $\cO_X(F-L)$ or $\Omega_\pi(L)$), either all summands
$\cG(s)$ are isomorphic to $\cO_X$, or finally they are isomorphic to
$\cO_X(-L)$. In the last two cases $\cG(s)$ is itself a trivial bundle
or a direct sum of copies of $\cO_X(-L)$. In the first case the
graded bundle $\gr(\cF(-s))$ is of
the form:
\[
\cO_X(-F)^{\oplus a} \oplus \cO_X(F-L)^{\oplus b} \oplus
\Omega_\pi(L)^{\oplus c},
\]
for some integers $(a,b,c)$.
Note that there are finitely many
ways to choose $a,b,c$ in the above display while keeping the
Hilbert polynomial unchanged.
This shows that the moduli space of semistable ACM
bundles with fixed Hilbert polynomial is a finite set. Corollary
\ref{cor1} is proved.

\bigskip

For the proof of Corollary \ref{cor2}, we construct the bundles
$\cU_k$ by mutation. Put:
\begin{align*}
  & \cU_{-1} = \cO_X(-F), \\
  & \cU_0 = \cO_X(F-L), \\
  & \cU_1 = \rL_{\cU_0} \cU_{-1} [1], && \cU_{k+1} = \rL_{\cU_k} \cU_{k-1}, && \mbox{for $k \ge 1$}, \\
  & \cU_{-2} = \rR_{\cU_{-1}} \cU_{0}[-1], && \cU_{-k-2} = \rR_{\cU_{-k-1}} \cU_{-k} && \mbox{for $k \ge 1$}.
\end{align*}

The fact that the objects $\cU_k$ are exceptional sheaves having a
resolution of the desired form follows as in \cite[Theorem B]{faenzi-malaspina:minimal}.

By Theorem \ref{B}, any indecomposable rigid ACM bundle on $X$ must
be, up to a twist, isomorphic to $\cO_X(-1)$, $\cO_X(-L)$ or
$\Omega_\pi(L)$ or a rigid Ulrich bundle of the form \eqref{ulrich-extension}. In turn, again as in
\cite[Theorem B]{faenzi-malaspina:minimal} we have that a rigid sheaf
appearing as the middle term of \eqref{ulrich-extension} must be
isomorphic to $\cU_k$ for some $k \in \ZZ$, with
$(a,b)=(c_{k-1},c_k)$. Moreover the equality $(a,b)=(c_{k-1},c_k)$ determines
the rigid bundle $\cU_k$ uniquely.

Finally, given $k \in \ZZ$, since $\cU_k^\vee \otimes \omega_X(2)$ is
a rigid Ulrich bundle which fits as middle term of an extension of the
form \eqref{ulrich-extension} with the same values of $a$ and $b$ as
$\cU_{1-k}$, by the uniqueness argument for the rigid bundles $\cU_k$
(cf. again \cite[\S 2]{faenzi-malaspina:minimal})
we must have $\cU_{1-k} \simeq \cU_k^\vee \otimes \omega_X(2)$.
This concludes the proof of Corollary \ref{cor2}.

\def\cprime{$'$} \def\cprime{$'$} \def\cprime{$'$} \def\cprime{$'$}
  \def\cprime{$'$} \def\cprime{$'$} \def\cprime{$'$} \def\cprime{$'$}
  \def\cprime{$'$}
\providecommand{\bysame}{\leavevmode\hbox to3em{\hrulefill}\thinspace}
\providecommand{\MR}{\relax\ifhmode\unskip\space\fi MR }
\providecommand{\MRhref}[2]{%
  \href{http://www.ams.org/mathscinet-getitem?mr=#1}{#2}
}
\providecommand{\href}[2]{#2}


\bigskip
\noindent
Daniele Faenzi,\\
Institut de Mathématiques de Bourgogne -- UMR CNRS 5584,\\
Université de Bourgogne et Franche-Comté,\\
9 avenue Alain Savary -- BP 47870,\\
21078 Dijon Cedex, France.\\
e-mail: {\tt daniele.faenzi@u-bourgogne.fr}

\bigskip
\noindent
Francesco Malaspina,\\
Dipartimento di Scienze Matematiche, Politecnico di Torino,\\
c.so Duca degli Abruzzi 24,\\
10129 Torino, Italy\\
e-mail: {\tt francesco.malaspina@polito.it}

\bigskip
\noindent
Giangiacomo Sanna,\\
Institut für Mathematik, Freie Universität Berlin\\
Arnimallee 3,\\ 14195 Berlin, Germany\\
e-mail: {\tt gianciacomo.sanna@gmail.com}

\end{document}